\DeclareRobustCommand*\cal{\@fontswitch\relax\mathcal}
\newcommand\pig[1]{\scalerel*[5pt]{\big#1}{%
  \ensurestackMath{\addstackgap[1.5pt]{\big#1}}}}
\newcommand{\compactlist}[1]{\setlength{\itemsep}{0pt} \setlength{\parskip}{0pt} \setlength{\leftskip}{-1.#1em}}
\newcommand{\Sum}{\textstyle\sum}
\numberwithin{equation}{section}
\DeclareRobustCommand{\SkipTocEntry}[5]{}
\theoremstyle{plain}
\newtheorem{theorem}{Theorem}[section]
\newtheorem{prop}[theorem]{Proposition}
\newtheorem{lemma}[theorem]{Lemma}
\newtheorem{lem}[theorem]{Lemma}
\newtheorem{cor}[theorem]{Corollary}
\theoremstyle{definition}
\newtheorem{definition}[theorem]{Definition}
\newtheorem{example}[theorem]{Example}
\newtheorem{rem}[theorem]{Remark}
\declaretheorem[name=Theorem, style=italics,
  numbered=no]{theorem*}
\newcommand{\ga}{\alpha}
\newcommand{\gD}{\Delta} 
\newcommand{\gve}{\varepsilon} 
\newcommand{\gvf}{\varphi}  
\newcommand{\gl}{\lambda}
\newcommand{\gr}{\rho}
\newcommand{\gs}{\sigma}
\newcommand{\cM}{{\mathcal M}}
\newcommand{\cZ}{{\mathcal Z}}
\newcommand{\id}{{\rm id}}
\newcommand{\due}[3]{{}_{{#2 }} {#1}_{{ #3}}\,}    % Zweifachindex
\newcommand{\pl}{\partial}
\newcommand{{\Hl}}{{H^{\ell}}} 
\newcommand{{\mHop}}{{m_{H^{\rm op}}}} 
\newcommand{{\Hop}}{{H^{\rm op}}} 
\newcommand{{\mUop}}{{m_{U^{\rm op}}}} 
\newcommand{{\mUopp}}{{m_{\scriptscriptstyle{U^{\rm op}}}}} 
\newcommand{{\Uop}}{{U^{\rm op}}}
\newcommand{{\mVop}}{{m_{V^{\rm op}}}} 
\newcommand{{\Vop}}{{V^{\rm op}}}  
\newcommand{{\Ae}}{{A^{\rm e}}}
\newcommand{{\Be}}{{B^{\rm e}}}
\newcommand{{\Ree}}{{R^{\rm e}}}
\newcommand{{\He}}{{H^{\rm e}}}
\newcommand{{\Aop}}{{A^{\rm op}}}
\newcommand{{\Aope}}{({A^{\rm op}})^{\rm e}}
\newcommand{{\Aopl}}{{A^{\rm op}_\pl}}
\newcommand{{\Bop}}{{B^{\rm op}}}
\newcommand{{\Bopp}}{{\scriptscriptstyle{{B^{\rm op}}}}}
\newcommand{{\Bope}}{({B^{\rm op}})^{\rm e}}
\newcommand{{\Bpl}}{{B_\pl}}
\newcommand{{\op}}{{{\rm op}}}
\newcommand{{\coop}}{{{\rm coop}}}
\newcommand{{\sop}}{{*^{\rm op}}}
\newcommand{{\co}}{{{\rm co}}}
\newcommand{\umod}{{}_U\cM}                     %  
\newcommand{\comodu}{\cM^U}         
\newcommand{\ucomod}{{}^U\!\cM}
\newcommand{\yd}{{}^U_U\mathbf{YD}}
\newcommand{\ydu}{{}_U \hspace*{-1pt}\mathbf{YD}^{\hspace*{1pt}U}}
\newcommand{\hyd}{{}^H_H\mathbf{YD}}
\newcommand{\ydh}{{}_\mathscr{H} \hspace*{-1pt}\mathbf{YD}^{\hspace*{1pt}\mathscr{H}}}
\newcommand{\tetra}{\mkern 1mu {}^{U}\hskip -7.5pt{}^{\phantom{U}}_U \cM^U_U}
\newcommand{\tetrah}{\mkern 1mu {}^{\mathscr{H}}\hskip -11.5pt{}^{\phantom{\mathscr{H}}}_\mathscr{H} \cM^\mathscr{H}_\mathscr{H}}
\newcommand{\coM}{{}^{\co\,U} \!  M}
\newcommand{\lact}{\smalltriangleright}                  
\newcommand{\ract}{\smalltriangleleft}
\newcommand{\blact}{\blacktriangleright}  
\newcommand{\bract}{\blacktriangleleft}
\newcommand{{\gog}}{{G \rightrightarrows G_0}}
\newcommand{{\rra}}{\rightrightarrows}
\newcommand{{\lra}}{\ \longrightarrow \ }
\newcommand{{\lla}}{\ \longleftarrow \ }
\newcommand{{\lma}}{\ \longmapsto \ }
\def\longleftrightarrows{\buildrel
  {\displaystyle{\relbar\joinrel\longrightarrow}} \over  {\longleftarrow\joinrel\relbar}}
\def\kasten#1{\mathop{\mkern0.7\thinmuskip
\vbox{\hrule
      \hbox{\vrule
            \hskip#1
            \vrule height#1 width 0pt
            \vrule}%
      \hrule}%
\mkern0.7\thinmuskip}}
\newcommand{\bx}{\raisebox{-.5pt}{${\kasten{6pt}}$}}
\newcommand{{\bull}}{{\scriptscriptstyle{\bullet}}}
\newcommand{{\qqquad}}{{\quad\quad\quad}}
\newsavebox{\foobox}
\newcommand{\pmact}{\mbox{ \raisebox{-1pt}{\ding{226}} }}
\newcommand{\mpact}{\mbox{ \raisebox{-1pt}{\ding{227}} }}
\newcommand{\smap}{{\raisebox{0.3pt}{${{\scriptscriptstyle {[+]}}}$}}}
\newcommand{\smam}{{\raisebox{0.3pt}{${{\scriptscriptstyle {[-]}}}$}}}
 \newcommand{\mancino}{{\,\scalebox{0.7}{\rotatebox{90}{\mancone}}\,}}
 \newcommand{\pancino}{\raisebox{4.5pt}{{\,\scalebox{0.7}{\rotatebox{270}{\mancone}}\,}}}
\keywords{Hopf algebroids, Hopf (bi)modules, tetramodules, Yetter-Drinfel'd modules, monoidal equivalences, braidings}
\subjclass[2020]{
  16T05, 16T15, 18M05, 18M15
}
\begin{document}

\title{Hopf bimodules for bialgebroids}

\author{Sophie Chemla}
\author{Niels Kowalzig}

\address{\hskip -7pt S.C.: Sorbonne Universit\'e \& Universit\'e Paris Cit\'e, CNRS, IMJ-PRG, F-75005 Paris, France}
\email{sophie.chemla@imj-prg.fr}

\address{\hskip -7pt N.K.: Dipartimento di Matematica, Universit\`a di Roma Tor Vergata, Via della Ricerca Scientifica~1,
00133 Roma, Italy}
\email{kowalzig@mat.uniroma2.it}

\begin{abstract}
  Generalising a result for Hopf algebras, we not only define the four possible types of Hopf modules in the bialgebroid setting but also
yield the notion of two-sided two-cosided Hopf modules, also known as Hopf bimodules or tetramodules, in this realm.
By explicitly formulating a fundamental theorem for Hopf modules via the concept of Hopf-Galois comodules,  
we prove that the category of Hopf bimodules can be endowed with the structure of a (pre-)braided monoidal category in two different ways, which, in turn, are shown to be both braided monoidally equivalent to the category of Yetter-Drinfel'd modules, that is, to the monoidal centre of the category of left bialgebroid modules or comodules. As an illustration, we discuss relative Hopf bimodules associated to Ehresmann-Schauenburg bialgebroids.
\end{abstract}

\maketitle

\tableofcontents

\section*{Introduction}

Classical Hopf modules, that is, vector spaces that are simultaneously modules and comodules over a Hopf algebra with a (in a certain sense) multiplicative compatibility condition, have been part of Hopf algebra theory right from its beginning. By means of their famous fundamental (or structure) theorem, see \cite[Thm.~4.1.1]{Swe:HA}, they allow to obtain important results such as, for example, the fact that a finite dimensional Hopf algebra (over a field) is always a Frobenius algebra. They also play a r\^ole in the context of homogeneous vector bundles on quantum homogeneous spaces providing insight into representation theory and generalising classical concepts like the Borel-Weil theorem, and are, moreover, of crucial importance in Woronowicz's study of differential calculi over quantum groups \cite{Wor:DCOCMPQG}.

At a next level of complexity, 
one can subsequently consider two-sided and two-cosided Hopf modules over a Hopf algebra (or bialgebra), which are usually termed {\em Hopf bimodules} or sometimes {\em tetramodules}: that is, objects endowed with a left and a right action as well as a left and right coaction, all compatible among each other in a (multiplicative) way.
%The terminology {\em tetramodules} is less common, but in our opinion better describes what it actually is as for the mere definition no Hopf (that is, antipode kind of) structure is needed but only relies on the underlying bialgebra. This, of course, already applies to the term {\em Hopf module}.

Hopf bimodules have been studied under various aspects, and have found successful applications in, just to name a few, homological algebra or in connection to $n$-fold monoidal categories, inasmuch they are the natural choice of coefficients in {\em Gerstenhaber-Schack cohomology}, which is the cohomology theory that governs bialgebra deformations; see, among others, \cite{Bic:HHOHAAFYDROTC, Bic:GSAHHOHA, PanSte:DCFYDMAHB, Sho:TOABFATMC, Tai:CTOHBACP, Tai:IHBCOIDHAAGCOTYP} for interesting results in these directions. Other aspects of Hopf bimodules include coefficients in Hopf-cyclic cohomology and the recent account in \cite{HalKrae:ANSSKLM} on a connection to topological quantum computation by means of the Kitaev model.

\addtocontents{toc}{\SkipTocEntry}

\subsection*{Aims and objectives}

In \cite{Schau:HMAYDM}, Schauenburg discussed categorical aspects of both the category of Hopf modules and that of Hopf bimodules; in particular, their monoidal structure, the existence of (pre-)braidings, as well as the existence of braided monoidal equivalences (in two different ways) of the category of Hopf bimodules to the category of Yetter-Drinfel'd (YD) modules; which, in turn, are related to the monoidal centre construction of the category of modules over the Hopf algebra in question.

The principal aim of this article is to generalise these constructions to {\em (left) bialgebroids}, which can be seen as bialgebras over noncommutative base rings (and that behave to bialgebras as groupoids do to groups), as well as {\em left} and {\em right} Hopf algebroids, by which we mean left bialgebroids that admit certain kinds of invertible Hopf-Galois (or canonical) maps,
see the Appendix for more precise information.

Quite recently in \cite{Han:HBAYDMOHA}, a Hopf bimodule theory was developed in the more restrictive context of {\em full} Hopf algebroids \cite{BoeSzl:HAWBAAIAD}, that is, objects being simultaneously a {\em left} and a {\em right} bialgebroid endowed with an antipode-like morphism  intertwining these two structures. However, our Definition \ref{tetra} for Hopf bimodules, see below, is based on the more general (and weaker) notion of  bialgebroid only, and hence not equivalent to that appearing in {\em op.~cit.} (as, in general, there might be no underlying right bialgebroid structure or antipode needed there to make the definition work).

\addtocontents{toc}{\SkipTocEntry}

\subsection*{New definitions and main results}

The definition of the four possible variants of {\em Hopf modules} over a (left) bialgebroid $(U,A)$, that is, the definition of an object $M$ equipped with simultaneously a (left or right) $U$-action (denoted by juxtaposition) and a (left or right) {$U$-coaction} (denoted by $\gl_M$ or $\rho_M$) in a compatible way, requires (in particular in the case of right $U$-actions) more technical attention than in the Hopf algebra case, in order to give a well-defined sense to expressions like $\rho_M(mu) = \rho_M(m)\gD(u)$ or $\gl_M(mu) = \gl_M(m)\gD(u)$, where $\gD$ denotes the coproduct in $U$, since bialgebroids are monoids and comonoids in different kinds of (even nonsymmetric) categories of (bi)modules over the underlying base algebra $A$ resp.\ over $\Ae := A \otimes \Aop$.
As a consequence, a Hopf module a priori comes with four different $A$-actions of which two are asked to coincide, and a third one is {\em required} to commute with the (right) $U$-action. Full details can be found in the new (and symmetrically formulated) Definitions \ref{genzano} \& \ref{frascati}.

A {\em Hopf bimodule} or {\em tetramodule} is then an $\Ae$-bimodule that is, similar to the Hopf algebra case, simultaneously a Hopf module in all possible four different senses, see the equally new Definition \ref{tetra}.
For this, no Hopf structure or antipode of any kind is needed.

Next, we recapitulate the definition of what we call a (left or right) Hopf-Galois comodule of which the definition appeared in \cite{Che:ITFLHLB} before. These are (right or left) comodules endowed with a certain invertible Hopf-Galois, or canonical map, that is, comodules which bear a sort of Hopf structure themselves, see Definition \ref{minestra} for more precise information. This notion allows us to compactly formulate a {\em Fundamental} (or {\em Structure}) {\em Theorem} for Hopf modules in the bialgebroid setting, that is, any Hopf module can be constructed (in a ``free'' way) from its coinvariants. In analogy to the Hopf algebra case in \cite{Schau:HMAYDM}, this is the base of the stronger statement that the category of Hopf bimodules is equivalent  to that of Yetter-Drinfel'd modules, which, in turn, is equivalent (see \cite[Prop.~4.4]{Schau:DADOQGHA}) to the category given by the (right weak) monoidal centre in the category ${}_U \cM$ of left $U$-modules, or, alternatively, to the category given by the (left weak) monoidal centre in the category $\cM^U$ of right $U$-comodules, as we prove in Proposition \ref{endlichwaermerjetzt?}. In Theorems \ref{spinoza} \& \ref{rhodia}, we prove (see the main text for notation and details):

\begin{theorem*}
  Let $(U,A)$ be
a right Hopf algebroid, that is, a left bialgebroid endowed with a (certain) invertible Hopf-Galois map,
and let the right $A$-module $U_\ract$ be flat.
\begin{enumerate}
 \compactlist{50}
\item Then the functors
  between the category of right-left Hopf modules and left $A$-modules
  $$
  {}^{\co\,U} \! (-) \colon {}^U\mkern -5mu\cM_U
\quad \raisebox{-3pt}{$\longleftrightarrows$} \quad {}_A \cM \ \colon \!  U \otimes_A - 
  $$
are mutually adjoint and  establish an equivalence of categories.
\item
  The same functors can also be defined between the category of Hopf bimodules and that of (left-right) Yetter-Drinfel'd modules such that
$$
  {}^{\co\,U } \! (-) \colon {}_U^{\mkern 2mu U}\mkern -1mu\cM^U_U
\quad \raisebox{-3pt}{$\longleftrightarrows$} \quad \ydu \ \colon \!  U_\ract \otimes_A - 
$$
are again mutually adjoint functors and  again establish an equivalence of categories.
\end{enumerate}
\end{theorem*}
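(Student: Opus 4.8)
The plan is to establish (i) first, since it is essentially the Fundamental Theorem for left-right Hopf modules recast as an adjoint equivalence, and then to deduce (ii) by transporting, along this equivalence, the two structure maps of a Hopf bimodule not visible in (i) into a Yetter--Drinfel'd structure on the coinvariants; crucially the two functors themselves are left unchanged.

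\emph{Part (i).} First I would check that the two assignments are well-defined functors: for $N\in{}_A\cM$ the object $U\otimes_A N$ lies in ${}^U\cM_U$ with left coaction $\gD\otimes_A\id_N$ and right $U$-action by right multiplication on the $U$-leg, the pertinent $A$-actions being inherited from $U$; and for $M\in{}^U\cM_U$ the coinvariants ${}^{\co\,U}\!M=\{\,m\in M:\gl_M(m)=1\otimes_A m\,\}$ form a left $A$-module via the $A$-action on $M$ that Definition~\ref{genzano} requires to be compatible with $\gl_M$ (and to commute with the right $U$-action). Next I would write down the unit $\eta_N\colon N\to{}^{\co\,U}\!(U\otimes_A N)$, $n\mapsto 1\otimes_A n$, and the counit $\theta_M\colon U\otimes_A{}^{\co\,U}\!M\to M$, $u\otimes_A m\mapsto um$; verifying that $1\otimes_A n$ is coinvariant, that $\eta$ and $\theta$ are natural, and that the triangle identities hold exhibits $U\otimes_A-$ as left adjoint to ${}^{\co\,U}\!(-)$. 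That $\eta_N$ is bijective is elementary: a sum $x=\sum u'\otimes_A n'$ is coinvariant iff $\sum u'_{(1)}\otimes_A u'_{(2)}\otimes_A n'=\sum 1\otimes_A u'\otimes_A n'$, and applying $\id\otimes\gve\otimes\id$ to both sides yields $x=1\otimes_A\sum\gve(u')\,n'\in\im\eta_N$, while $\gve$ on the $U$-leg is a retraction of $\eta_N$.

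The crux is that the counit $\theta_M$ is an isomorphism, i.e.\ the Fundamental Theorem itself. Here I would use the right Hopf algebroid structure of $U$, which supplies a translation map $u\mapsto u_+\otimes u_-$ obeying the identities recalled in the Appendix; as in Definition~\ref{minestra} this makes $M$ into a (left) Hopf--Galois comodule. Using the translation map one builds a projection $P\colon M\to{}^{\co\,U}\!M$ onto the coinvariants out of $\gl_M$ and the right $U$-action (the well-typed analogue of the Hopf algebra formula $P(m)=m_{(0)}\bract S^{-1}(m_{(-1)})$), and then a candidate inverse $M\to U\otimes_A{}^{\co\,U}\!M$, $m\mapsto m_{(-1)}\otimes_A P(m_{(0)})$. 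One then checks that this formula is $A$-balanced, lands in $U\otimes_A{}^{\co\,U}\!M$, is a morphism of Hopf modules, and is two-sided inverse to $\theta_M$; the projectivity --- hence flatness --- of $U_\ract$ enters to guarantee that $U\otimes_A-$ preserves the equalizer defining the coinvariants, so that these pointwise formulas assemble into genuine, mutually inverse morphisms. I expect this step to be the main obstacle, both because it hinges on the precise translation-map identities of the Appendix and because $A$-linearity has to be followed through the several distinct base-algebra actions a Hopf module carries. Granting it, $\eta$ and $\theta$ are natural isomorphisms, so $U\otimes_A-\dashv{}^{\co\,U}\!(-)$ is an adjoint equivalence, in particular the functors are mutually adjoint, which proves (i).

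\emph{Part (ii).} A Hopf bimodule $M\in{}_U^U\cM^U_U$ is, in particular, a left-right Hopf module for the pair (left coaction $\gl_M$, right $U$-action), so by (i) there is a natural isomorphism $M\cong U\otimes_A N$ with $N:={}^{\co\,U}\!M$. It then remains to see what becomes of the two structure maps of $M$ not used in (i): its left $U$-action $\blact$ and its right coaction $\rho_M$. Using the bicomodule identity together with the right-action/right-coaction Hopf module axiom, I would show that $\rho_M$ restricts to $N$, making $N$ a right $U$-comodule; the left action $\blact$ does \emph{not} restrict, but the adjoint-type operation $u\triangleright n:=P(u\blact n)$ --- which, in the Hopf algebra case, reads $u\triangleright n=(u_{(2)}\blact n)\bract S^{-1}(u_{(1)})$, with $S^{-1}$ replaced in general by the translation map --- does land in $N$ and defines a left $U$-module structure on $N$ compatible with the left $A$-module structure from (i). The heart of the matter is then to verify that the entire package of tetramodule axioms for $M$ is equivalent to the single Yetter--Drinfel'd compatibility condition relating $\triangleright$ and $\rho_M$ on $N$, so that $N\in\ydu$; conversely, given $N\in\ydu$, one equips $U\otimes_A N$ with the diagonal-type left $U$-action and right $U$-coaction dictated by the formulas above and checks, using the YD condition, that all four Hopf module axioms hold, i.e.\ that $U\otimes_A N\in{}_U^U\cM^U_U$. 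Finally one checks that with these enhancements $\eta_N$ is a morphism in $\ydu$ and $\theta_M$ a morphism in ${}_U^U\cM^U_U$, so that the adjunction and the two isomorphisms of (i) lift verbatim and give the asserted equivalence, naturality being inherited from (i). Beyond re-using the Fundamental Theorem, the remaining difficulty here is the careful book-keeping needed to match the collection of tetramodule compatibilities with the single YD condition over the noncommutative base $A$ with its $\Ae$-bimodule structure.
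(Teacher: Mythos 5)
Your proposal is correct and follows essentially the same route as the paper: the adjunction with unit $n\mapsto 1\otimes_A n$ and counit the action map, the counit inverted via the translation map of the Hopf--Galois comodule structure (your projection $P(m)$ is exactly the paper's $m_{\smap}m_{\smam}$, and the inverse $m\mapsto m_{(-1)}\otimes_A P(m_{(0)})$ is the paper's $\xi^{-1}$), and for part (ii) the adjoint action $u\triangleright n=P(u\blact n)=u_{\smap}nu_{\smam}$ on coinvariants together with the restricted right coaction, plus the converse check that $U\otimes_A P$ is a tetramodule for $P$ a Yetter--Drinfel'd module. The technical heart you flag --- the multiplicativity of the translation map under the module structure --- is precisely the paper's Lemma \ref{minestrone}.
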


\noindent Similar statements can be made with respect to the other three possible kinds of Hopf modules in its first and second part, and also with respect to the category $\yd$ of left-left YD modules in its second.
As a next goal, we want to enhance the above categorical equivalence to an equivalence of monoidal categories, and, in particular, of {\em braided} monoidal categories. Combining and concentrating the statements of Lemma \ref{exposition}, Proposition \ref{resistente}, Theorem \ref{spielzeit}, and of Theorem \ref{buddha}, we obtain (see again the main text for notation and details):

\begin{theorem*}
  Let $(U,A)$ be a left bialgebroid.
  \begin{enumerate}
    \compactlist{50}
    \item
The category
$\tetra$ of Hopf bimodules forms a monoidal category with respect to the tensor product over $U$ (with monoidal unit $U$).
\item
  If both $U_\ract$ and $\due U \lact {}$ are $A$-flat, then 
  the category $\tetra$ of Hopf bimodules 
  with respect to the cotensor product over $U$ (with monoidal unit $U$) forms a monoidal category as well. 
%% \item
%%   If both $U_\ract$ and $\due U \lact {}$ are projective over $A$ and if the left bialgebroid $(U,A)$ is a left Hopf algebroid, that is, equippped with a (certain) invertible Hopf-Galois map, then
%%   these are monoidally equivalent, and, in particular, equivalent to the category of left-right Yetter-Drinfel'd modules, {\em i.e.}, one has a chain 
%%   $$
%% \big(\tetra, \otimes_{\mkern 1mu U} , U\big) \ \simeq \ \big(\tetra, \bx_U, U\big) \ \simeq \ \big(\ydu, \otimes_A, A \big)
%%   $$
%%   of monoidal equivalences of categories.
\item
  If both $U_\ract$ and $\due U \lact {}$ are $A$-flat and if $(U,A)$ is both a left and right Hopf algebroid, that is, a left bialgebroid equipped with two different kinds of invertible Hopf-Galois maps, then the two categories above are monoidally equivalent and, in particular, admit braidings that fit into
a chain of equivalences
  $$
\qquad\ \big(\tetra, \otimes_{\mkern 1mu U} , U, \tau\big) \ \simeq \ \big(\tetra, \bx_U, U, \tau'\big) \ \simeq \ \big(\ydu, \otimes_A, A, \gs \big) \ \simeq \ \big(\yd, \otimes_A, A, \gs' \big)
  $$
of braided monoidal categories.
  \end{enumerate}
\end{theorem*}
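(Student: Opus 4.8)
The plan is to treat the three parts in order: parts (1) and (2) amount to verifying the axioms of a monoidal category, while part (3) consists in transporting the (pre-)braiding of the monoidal centre along a chain of strong monoidal equivalences. For part (1), given tetramodules $M,N$ I would equip $M \otimes_U N$ with the outer $U$-actions (the left one from $M$, the right one from $N$) and with the codiagonal $U$-coactions, e.g.\ $m \otimes_U n \mapsto m_{[-1]}n_{[-1]} \otimes_A (m_{[0]} \otimes_U n_{[0]})$ on the left and symmetrically on the right, using the multiplication of $U$; Lemma \ref{exposition} is the assertion that these are well defined over the $U$-balancing, that the result satisfies all four Hopf-module compatibilities of Definition \ref{tetra} (here the bialgebroid axioms --- that $\gD$ is an $\Ae$-ring map, together with the source/target compatibilities over the noncommutative base $A$ --- do the work), and that the regular tetramodule $U$ (regular actions, $\gD$ as both coactions) is a two-sided unit, the associativity and unit constraints being inherited from $\otimes_U$ at the level of $\Ae$-bimodules and checked to be tetramodule maps. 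No flatness is needed here. For part (2) one takes instead the cotensor product $M \bx_U N$, the equaliser of $\rho_M \otimes \id$ and $\id \otimes \lambda_N$; flatness of $U_\ract$ and $\due U \lact {}$ over $A$ is exactly what makes $\bx_U$ of two $U$-bicomodules a $U$-bicomodule again, makes $\bx_U$ associative, and lets the diagonal $U$-actions descend to it, and Proposition \ref{resistente} records that $M \bx_U N$ with the diagonal actions and the outer coactions is again a tetramodule with unit $U$.

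The substance of part (3) is to promote the categorical equivalence of Theorems \ref{spinoza}/\ref{rhodia}, given by the adjoint pair ${}^{\co\,U}(-)$ and $U \otimes_A -$, to a strong monoidal equivalence $(\tetra, \otimes_U, U) \simeq (\ydu, \otimes_A, A)$. Concretely I would show $U \otimes_A -$ is strong monoidal by constructing a natural isomorphism of tetramodules $(U \otimes_A X)\otimes_U(U \otimes_A Y) \cong U \otimes_A (X \otimes_A Y)$ for Yetter-Drinfel'd modules $X,Y$, together with $U \cong U \otimes_A A$, and verifying the pentagon and triangle; the isomorphism is available because the invertible Hopf-Galois (canonical) map of the right Hopf algebroid supplies the ``untwisting'' that identifies expressions of the shape $(U \otimes_A X)\otimes_U U$ with $U \otimes_A X$ after absorbing the residual left $U$-action --- this is where the projectivity of $U_\ract$ over $A$ and the right Hopf algebroid hypothesis enter. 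A mirror argument, using a Fundamental Theorem for the cotensor side (coinvariants for the opposite coaction, which requires $\due U \lact {}$ projective over $A$ together with the left Hopf algebroid structure), yields $(\tetra, \bx_U, U) \simeq (\ydu, \otimes_A, A)$ as monoidal categories; composing the two produces the monoidal equivalence $(\tetra, \otimes_U, U) \simeq (\tetra, \bx_U, U)$. This is the content of Theorem \ref{spielzeit}. The main obstacle lies precisely here: constructing the constraint isomorphism above and checking it is a morphism of tetramodules and satisfies coherence forces one to keep careful track of which of the four $A$-actions on each tetramodule, and which of the two Hopf-Galois maps, is in play at each step --- the book-keeping over the noncommutative base $A$ is the delicate point, not any one computation.

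Finally, by \cite[Prop.~4.4]{Schau:DADOQGHA} the category $\ydu$ is an incarnation of the (weak right) monoidal centre of $({}_U\cM, \otimes_A, A)$ and so carries the canonical centre braiding $\gs$ (a priori only a pre-braiding), which under the stated projectivity and two-sided Hopf algebroid hypotheses is invertible, i.e.\ an honest braiding; Theorem \ref{buddha} provides the explicit formula for $\gs$, its analogue $\gs'$ on the left-left modules $\yd$, and the braided monoidal equivalence $(\ydu, \otimes_A, A, \gs) \simeq (\yd, \otimes_A, A, \gs')$ obtained by switching the side of the Yetter-Drinfel'd compatibility, which the two-sided Hopf algebroid hypothesis makes into an equivalence. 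Transporting $\gs$ along the strong monoidal equivalences of the previous step yields braidings $\tau$ on $(\tetra, \otimes_U, U)$ and $\tau'$ on $(\tetra, \bx_U, U)$; since the transport of a braiding along a strong monoidal equivalence is again a braiding and the equivalences are by construction braided for the transported structures, concatenating everything delivers the asserted chain
$$
\big(\tetra, \otimes_{\mkern 1mu U} , U, \tau\big) \ \simeq \ \big(\tetra, \bx_U, U, \tau'\big) \ \simeq \ \big(\ydu, \otimes_A, A, \gs \big) \ \simeq \ \big(\yd, \otimes_A, A, \gs' \big)
$$
of braided monoidal categories. The leftover checks --- that the various constraint and half-braiding isomorphisms are themselves tetramodule resp.\ Yetter-Drinfel'd maps and fit the coherence diagrams --- are routine but lengthy, and I expect no conceptual difficulty there beyond the base book-keeping already flagged.
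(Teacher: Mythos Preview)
Your broad strategy matches the paper's: establish the two monoidal structures, link to $\ydu$ via the adjunction of Theorem~\ref{rhodia}, and transport the Yetter--Drinfel'd braiding. The one substantive route difference is how you reach $(\tetra,\bx_U)\simeq(\ydu)$ and hence $(\tetra,\otimes_U)\simeq(\tetra,\bx_U)$: you propose a separate ``mirror'' Fundamental Theorem on the cotensor side (opposite coinvariants, left Hopf structure) and then compose the two YD equivalences, whereas the paper proceeds in the opposite order. Proposition~\ref{resistente} builds the equivalence $(\tetra,\otimes_U)\simeq(\tetra,\bx_U)$ \emph{directly}, via the explicit natural isomorphism $\xi_{M,N}\colon m\otimes_U n\mapsto m_{[0]}n_{(-1)}\bx_U m_{[1]}n_{(0)}$ with inverse $m\bx_U n\mapsto m_+m_-\otimes_U n$ (needing only the left Hopf structure), and Theorem~\ref{spielzeit}(ii) then obtains $(\tetra,\bx_U)\simeq(\ydu)$ by composing $\xi$ with the $\otimes_U$ equivalence already proved in part~(i). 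The paper's route is shorter and more concrete; your mirror step would require exhibiting something like $(U\otimes_A P)\bx_U(U\otimes_A Q)\cong U\otimes_A(P\otimes_A Q)$ as tetramodules, which is less transparent than its $\otimes_U$ analogue, and you have not said which functor or constraint isomorphism you would actually use there.

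Two labelling corrections: what you ascribe to Proposition~\ref{resistente} (that $M\bx_U N$ is again a tetramodule) is Lemma~\ref{exposition}(ii); Proposition~\ref{resistente} is precisely the direct $\xi_{M,N}$ just described. And Theorem~\ref{buddha} supplies the explicit formula for the braiding $\tau$ on $(\tetra,\otimes_U)$ and shows the monoidal constraint $\zeta$ intertwines it with the standard YD braiding $\gs$; the equivalence $(\ydu,\gs)\simeq(\yd,\gs')$ is handled in a separate remark citing \cite{CheGavKow:DFAB}, not in Theorem~\ref{buddha} itself.
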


In Section \ref{exa}, we discuss a class of examples given by
so-called {\em Ehresmann-Schauenburg bialgebroids} as introduced in \cite{Schau:BONRAASTFHB}, or rather a left coinvariant version of it: for a Hopf algebra $H$ over a field $k$ with invertible antipode and a faithfully flat Hopf-Galois extension $B \subseteq A$, the space ${}^{\co H}\! (A \otimes A) =: \mathscr{H}$ can be given the structure of a left bialgebroid over $B$, and also of a right Hopf algebroid; in Proposition \ref{hrensko}, we will give a sufficient criterion for this right Hopf algebroid to become a full Hopf algebroid, admitting an antipode. Moreover, as in {\em op.~cit.}, we introduce the category of {\em relative Hopf bimodules} ${}^{\,H}_A \! \cM^\mathscr{H}_A$, which in Theorem \ref{weich} and Corollaries \ref{weich2} \& \ref{weich3} allows us to prove:

\begin{theorem*}
  Let $H$ be a Hopf algebra over a field $k$ with invertible antipode, and let $B \subseteq A$ be a faithfully flat Hopf-Galois extension. Then one has equivalences
  $$
\tetrah \simeq  \ydh \simeq {}^{\,H}_A \! \cM^\mathscr{H}_A \simeq \hyd
  $$
  of monoidal categories.
\end{theorem*}

To conclude, the appendix contains a summary on basic results of left bialgebroids, their possible left or right Hopf structure and appurtenant terminology, plus modules, comodules, and YD modules over (left) bialgebroids, as well as all notation used throughout the text.

\addtocontents{toc}{\SkipTocEntry}

\subsection*{Notation}
Let $k$ be a commutative ring usually referred to as ground ring, which might be a field and of characteristic zero if need be.
%As customary, unadorned tensor products or $\Hom$s refer to those over $k$.
We also use multiple kinds of Sweedler notation, with left comodules being indicated by round brackets, right comodules by square brackets. Have a look as well at the appendix for more bialgebroid related notation.

\addtocontents{toc}{\SkipTocEntry}

\subsection*{Acknowledgements}
Partially supported by the MIUR Excellence Department Project MatMod@TOV
(CUP:E83C23000330006).
N.\ K.\ is a  member of the {\em Grup\-po} {\em Na\-zio\-na\-le} {\em per le Strutture Algebriche, Geometriche e le loro
   Applicazioni} (GNSAGA-INdAM).
He wants to thank in particular the {\em Institut de Math\'ematiques de Jussieu -- Paris Rive Gauche}, where part of this work has been achieved, for hospitality and support through the {\em CNRS}.
Both authors would like to thank Peter Schauenburg for helpful discussions.

 \section{Hopf modules, Hopf bimodules, and Hopf-Galois comodules}
 \label{one}

 For a comodule over a bialgebroid $(U,A)$, be it left or right (see \S\ref{schleifmaschine} for a summary), it is natural to study those which, in addition, admit a compatible (left or right) $U$-action. There are various types of compatibility conditions between action and coaction, the possibly simplest being a kind of multiplicativity that defines {\em Hopf modules} and related, but with a richer structure, {\em Hopf bimodules} (also known as {\em tetramodules}), while {\em Yetter-Drinfel'd modules} come with a very different kind of compatibility.

 \subsection{Hopf modules over bialgebroids}
The four kinds of Hopf modules for a bialgebroid formally look the same as they do for Hopf algebras \cite[\S4.1]{Swe:HA}, but depending on the specific kind and due to the noncommutativity and noncentrality of the base rings, some extra conditions are needed to make the appearing expressions well-defined. More precisely:

\begin{definition}
  \label{genzano}
  Let $(U, A)$ be a left bialgebroid and let $M \in \comodu$ be a right $U$-comodule, with its {\em natural} right resp.\ {\em induced} left $A$-action 
  $
  a \otimes m \otimes  a'  \mapsto ama'
  $
denoted by juxtaposition. %,  for $a, a'  \in A$ and $m \in M$.
  \begin{enumerate}
    \compactlist{99}
    \item
  \label{genzano2}
If $M$ is, moreover, a left $U$-module such that
  \begin{enumerate}
\compactlist{99}
  \item
    the natural right $A$-action on $M$ as a right $U$-comodule {\em coincides} with the right $A$-action induced by the left $U$-action, that is, $ma = m \ract a$;
\item
  the induced left $A$-action on $M$ as a right $U$-comodule {\em commutes} with the left $U$-action on $M$ (and hence with the left $A$-action induced by the left $U$-action);
\item
  the equation
  \begin{equation}
    \label{accedi1}
    \rho_M(um)=\gD(u) \rho_M(m)
    \end{equation}
  holds as a product in $M \times_A \due U \lact {}$,
  where $\gD$ denotes the coproduct in $U$, 
\end{enumerate}
  then $M$ is called a {\em left-right Hopf module} over $U$.

  \noindent The corresponding category will be denoted by ${}_U\cM^U$.
\item
  \label{genzano1}
  If $M$ is, moreover, a right $U$-module such that
  \begin{enumerate}
    \compactlist{99}
\item
  the induced left $A$-action on $M$ as a right $U$-comodule {\em coincides} with the left $A$-action induced by the right $U$-action, that is,
  $am = a \blact m$;
\item
the natural right $A$-action on $M$ as a right $U$-comodule {\em commutes} with the right $U$-action on $M$ (and hence with the right $A$-action induced by the right $U$-action);
\item
  the equation
  \begin{equation}
    \label{accedi2}
    \rho_M(m u)=\rho_M(m)\Delta(u)
    \end{equation}
holds as a product in $M \times_A \due U \lact {}$, where $\gD$ denotes the coproduct in $U$, 
  \end{enumerate}
  then $M$ is called a {\em right-right Hopf module} over $U$.

  \noindent The corresponding category will be denoted by $\cM^U_U$.
\end{enumerate}
  \end{definition}

\begin{rem}
  Observe that (b) in part (i) is redundant as this directly follows from Eq.~\eqref{accedi1}, while this is not so for the respective statements in part (ii), where (b) is needed to give a sense to Eq.~\eqref{accedi2}.
  For the sake of a better symmetry we formulated it this way, but it shows a certain structural difference between left-right and right-right Hopf modules, the problem arising from the non-monoidality of the category of right $U$-modules.
    \end{rem}

\begin{definition}
  \label{frascati}
  Let $(U, A)$ be a left bialgebroid and let $M \in \ucomod$ be a left $U$-comodule, with its {\em natural} left resp.\ {\em induced} right $A$-action 
  $
  a \otimes m \otimes  a' \mapsto a \cdot m \cdot a'
  $
denoted by a central dot.
%  for $a,a' \in A$ and $m \in M$.
  \begin{enumerate}
\compactlist{99}
\item
  \label{frascati1}
  If $M$ is, moreover, a right $U$-module such that
  \begin{enumerate}
    \compactlist{99}
       \item
the induced right $A$-action on $M$ as a left $U$-comodule {\em coincides} with the right $A$-action induced by the right $U$-action, that is, $m \cdot a= m\bract a$;
\item
  the natural left $A$-action on $M$ as a left $U$-comodule {\em commutes} with the right $U$-action on $M$ (and hence with the left $A$-action induced by the right $U$-action);
\item
  the equation
    \begin{equation}
    \label{accedi3}
    \lambda_M(m u)=\lambda_M(m)\Delta(u)
\end{equation}
    holds as a product in $U_\ract \times_A M$, where $\gD$ denotes the coproduct in $U$, 
  \end{enumerate}
then
we call $M$ a \textit{right-left Hopf module} over $U$.

\noindent The corresponding category will be denoted by ${}^U\mkern -5mu\cM_U$.
\item
  \label{frascati2}
If $M$ is, moreover, a left $U$-module such that
\begin{enumerate}
  \compactlist{99}
\item
  the natural left $A$-action on $M$ as a left $U$-comodule {\em coincides} with the left $A$-action induced by the left $U$-action, that is, 
  $a \cdot m = a \lact m$;
\item
  the induced right $A$-action on $M$ as a left $U$-comodule {\em commutes} with the left $U$-action on $M$ (and hence with the right $A$-action induced by the left $U$-action);
\item
  the equation
    \begin{equation}
    \label{accedi4}
    \lambda_M(um)=\gD(u) \lambda_M(m)
\end{equation}
    holds as a product in $U_\ract \times_A M$, where $\gD$ denotes the coproduct in $U$, 
\end{enumerate}
then
we call $M$ a \textit{left-left Hopf module} over $U$.

\noindent The corresponding category will be denoted by
${}_U^{\mkern 2mu U}\mkern -1mu\cM$.
\end{enumerate}
  \end{definition}

\begin{rem}
  \label{roslagen}
Again, observe that this time (b) in part (ii) is redundant since this directly follows from Eq.~\eqref{accedi4}, while this is not so for the respective statements in part (i), where (b) is needed to give an actual sense to Eq.~\eqref{accedi3}.
  For the sake of a better symmetry we formulated it this way, but it again shows a certain structural difference between right-left and left-left Hopf modules, the problem again arising from the non-monoidality of the category of right $U$-modules, as above.
\end{rem}

\begin{example}
The total space $U$ of a left bialgebroid is a Hopf module over itself in all four possible senses, by using its multiplication and comultiplication, while the base algebra $A$, despite being endowed at least with a left $U$-action via the counit as well as a left and right $U$-coaction via the source resp.\ the target map, is not.
  \end{example}

\subsection{Hopf bimodules over bialgebroids}
Intuitively quite clear, the notion of Hopf bimodule or tetramodule over a left bialgebroid consists in putting all four parts in Definitions \ref{genzano} \& \ref{frascati} together:

\begin{definition}
  \label{tetra}
A {\em Hopf bimodule} or {\em tetramodule} over a left bialgebroid $(U,A)$ is a $U$-bimodule and a $U$-bicomodule that is simultaneously an object in $ {}_U\cM^U$, $ \cM^U_U$, ${}_U^{\mkern 2mu U}\mkern -1mu\cM$, as well as ${}^U\mkern -5mu\cM_U$. The corresponding category will be denoted by ${}_U^{\mkern 2mu U}\mkern -1mu\cM^U_U$.
  \end{definition}

\begin{rem}
  Hence, by Definitions \ref{genzano} \& \ref{frascati}, a Hopf bimodule $M$ has four $A$-module structures, or rather eight, of which four pairwise coincide: with the notation introduced above that $A$-actions denoted by juxtaposition originate from the right $U$-coaction and those denoted by central dots $\cdot$ from the left one, one therefore obtains
  \begin{equation}
    \label{rain}
    \begin{array}{lr}
      ma = m \ract a = t(a)m,
      &
      am = a \blact m = mt(a),
\\
      m\cdot a = m \bract a = m s(a),
      &
      a \cdot m = a \lact m = s(a)m,
  \end{array}
  \end{equation}
  for $a \in A$ and $m \in M$, where $M$ is a Hopf bimodule over $U$.
\end{rem}

In general, let $X$ and $Y$ be two $U$-bimodules. Then their Takeuchi-Sweedler product $X \times_A Y$ is a $U$-bimodule by means of the usual diagonal action from left and right:
\begin{equation}
  \label{fog}
u \mancino (x \times_A y) := u_{(1)} x \times_A u_{(2)} y, \qquad
(x \times_A y) \pancino u := xu_{(1)}  \times_A yu_{(2)}.
\end{equation}
for $x \in X, y \in Y$, $u \in U$.
Then observe that a Hopf bimodule $M$ is, in particular, a bicomodule~with right and left coactions $\rho_M \colon M \to M \times_A U$ and $\gl_M \colon M \to U \times_A M$ as in Definitions \ref{genzano} \& \ref{frascati}. Hence, the four conditions \eqref{accedi1}--\eqref{accedi4} for a Hopf bimodule over $U$ can be rewritten the following way:
\begin{eqnarray}
  \label{sunshine1}
  \rho_M(um)= u \mancino \rho_M(m),
&&
  \rho_M(m u)=\rho_M(m)\pancino u,
  \\
    \label{sunshine2}
  \gl_M(m u)=\gl_M(m)\pancino u,
  &&
  \gl_M(um)=u \mancino \gl_M(m).
\end{eqnarray}

\noindent This proves (or rather tautologically states) the following:

\begin{prop}
\label{snow}
In a Hopf bimodule, the left and right $U$-actions are left and right $U$-colinear. Said fancier, a Hopf bimodule is a $U$-bimodule in the monoidal category
$\big({}^U \hskip -3pt\cM^U, \otimes_{\Ae}, \Ae\big)$ of $U$-bicomodules.
\end{prop}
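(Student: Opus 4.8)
The plan is to prove the proposition by simply unwinding the definitions: one checks that the four conditions \eqref{accedi1}--\eqref{accedi4} imposed on a Hopf bimodule $M$ in Definition \ref{tetra} become, once the right-hand sides are rewritten, exactly the statement that the coaction maps $\rho_M \colon M \to M \times_A U$ and $\gl_M \colon M \to U \times_A M$ are simultaneously left and right $U$-linear, where the Takeuchi--Sweedler products are endowed with the diagonal $U$-actions \eqref{fog}.

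First I would fix the usual Sweedler-type notation $\rho_M(m) = m_{[0]} \times_A m_{[1]}$ and $\gl_M(m) = m_{(-1)} \times_A m_{(0)}$, and recall that, by Definition \ref{tetra} together with the clauses (a)--(b) in Definitions \ref{genzano} \& \ref{frascati}, these elements genuinely lie in $M \times_A \due U \lact {}$ resp.\ in $U_\ract \times_A M$, so that the diagonal actions of \eqref{fog} are defined on them. Next I would expand the right-hand sides of \eqref{accedi1}--\eqref{accedi4}: since the left (resp.\ right) $U$-action on the tensor factor $U$ is nothing but left (resp.\ right) multiplication, the componentwise action of $\gD(u) = u_{(1)} \otimes u_{(2)}$ reads
\[
\gD(u)\rho_M(m) = u_{(1)} m_{[0]} \times_A u_{(2)} m_{[1]} = u \mancino \rho_M(m), \qquad \rho_M(m)\gD(u) = m_{[0]}u_{(1)} \times_A m_{[1]}u_{(2)} = \rho_M(m) \pancino u,
\]
and symmetrically $\gD(u)\gl_M(m) = u \mancino \gl_M(m)$ as well as $\gl_M(m)\gD(u) = \gl_M(m) \pancino u$. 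Substituting these into \eqref{accedi1}--\eqref{accedi4} produces precisely \eqref{sunshine1}--\eqref{sunshine2}, which is the asserted left and right $U$-linearity of $\rho_M$ and of $\gl_M$. Together with the coassociativity and counitality already part of the $U$-bicomodule structure of $M$ (which then automatically hold in ${}_U\cM_U$, the forgetful functor being faithful), this yields the ``fancier'' reformulation that $M$ is a $U$-bicomodule in ${}_U\cM_U$.

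As the remark just before the proposition already indicates, there is essentially no genuine obstacle here: the only point to verify with a little care is that the componentwise action of $\gD(u)$ on $M \times_A U$ and on $U \times_A M$ restricts to the respective Takeuchi--Sweedler subspaces and agrees there with $\mancino$ resp.\ $\pancino$ --- but this is exactly what clauses (a)--(b) of Definitions \ref{genzano} \& \ref{frascati} are designed to guarantee, so the proposition is indeed the tautology advertised in the text.
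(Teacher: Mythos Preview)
Your proposal is correct and matches the paper's approach exactly: the paper itself introduces the diagonal actions \eqref{fog}, rewrites \eqref{accedi1}--\eqref{accedi4} as \eqref{sunshine1}--\eqref{sunshine2}, and then says ``This proves (or rather tautologically states)'' the proposition. You have spelled out precisely this tautological rewriting, with the same attention to why the diagonal actions are well-defined on the Takeuchi--Sweedler subspaces.
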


\begin{rem}
In striking contrast to the bialgebra case, although Eqs.~\eqref{sunshine1} \& \eqref{sunshine2} equally express left and right $U$-linearity of the left and right $U$-coactions, this cannot be reformulated by stating that a Hopf bimodule is a $U$-bicomodule in the category ${}_U\hskip -1pt\cM_U$ of $U$-bimodules, the problem being that the latter is, in general, not monoidal.
\end{rem}

With Definition \ref{tetra} above, it is quite straightforward to prove that the category ${}_U^{\mkern 2mu U}\mkern -1mu\cM^U_U$ of tet\-ra\-modules is equivalent to the category $\ydu$ of right-left YD modules (or the category $\yd$ of left-left YD modules) as in the Hopf algebra case \cite{Schau:HMAYDM}. This will be the content~of~\S\ref{tetrayd}.

\subsection{Hopf-Galois comodules}
\label{lostbuddha}

For the various categorical equivalences involving Hopf modules, Hopf bimodules, and Yetter-Drinfel'd modules that will be dealt with in the subsequent Section \ref{tetrayd},
we need yet another notion, which can be understood as transferring the Hopf structure to the comodule in question itself: a left $U$-comodule $(M, \gl_M)$ with left coaction denoted
$\gl_M \colon M \to U_\ract \otimes_A M, \ m \mapsto m_{(-1)} \otimes_A m_{(0)}$, permits to define a
{\em Galois map} as
\begin{equation}
	\label{duracell}
	\alpha_M \colon M\otimes_AU\to U\otimes_AM, \quad m\otimes_A u
        \mapsto m_{(-1)}u \otimes_A m_{(0)},
\end{equation}
of which the possible invertibility yields the following notion:

\begin{definition}
\label{minestra}
If, for a left $U$-comodule $M$ over a left bialgebroid $(U, A)$,
the map $\ga_M$ from \eqref{duracell} is invertible, then we
call
$M$ a {\em (right) Hopf-Galois comodule} over $U$ and
write
$$
	m_{\smap }\otimes_A m_{\smam }:= \alpha_M^{-1}(1_U\otimes_A m)
$$
for the {\em translation map} $M \to M \otimes_A U$ on $M$.
\end{definition}

The terminology {\em right} in the above definition will be explained in a moment.
In case of such a Hopf-Galois comodule, {\em i.e.}, if the Galois map of a left $U$-comodule $M$ is invertible, the subsequent equalities that appeared in \cite[Prop.~3.0.6]{Che:ITFLHLB}
\begin{align}
	m_{\smap }\otimes_A m_{\smam }&\in M \times_AU,\label{Mch1}
	\\
        \label{Mch2}
	m_{\smap (-1)} m_{\smam }\otimes_Am_{\smap (0)}
	&=1\otimes_Am,
	\\
        \label{Mch3}
	m_{(0)\smap }\otimes_A m_{(0)\smam } m_{(-1)}
	&= m \otimes_A 1,
	\\
\label{Mch4}
        m_{\smap \smap }\otimes_A m_{\smap \smam }\otimes_A m_{\smam }
	&=m_{\smap }\otimes_Am_{\smam (1)}\otimes_Am_{\smam (2)},
        	\\
\label{Mch5}
        (a \cdot m \cdot a')_{\smap }\otimes_{A}(a \cdot m \cdot a')_{\smam }
	&=m_{\smap }\otimes_{A} a \blact m_{\smam } \ract a',
		\\
\label{Mch6}
        m_{\smap } \cdot \varepsilon(m_{\smam })
	&=m,
        \end{align}
hold true for $a,a' \in A$ and $m \in M$,
where in \eqref{Mch1} the Takeuchi-Sweedler product
$$
M \times_A U = \big\{ \textstyle\sum_i m_i \otimes u_i \in M \otimes_A \due U \lact {} \mid  \sum_i a \cdot m_i \otimes u_i = \sum_i m_i \otimes u_i \bract a, \ \forall a \in A   \big\}
$$
is meant.
From \eqref{Mch4} and \eqref{Mch6} one deduces that $\rho_M := \ga^{-1}_M (- \otimes_A 1_U)$,
that is, the map
\begin{equation*}
	%\label{fabercastell}
	\rho_M \colon M \to M \otimes_A U, \quad m \mapsto m_{\smap } \otimes_A m_{\smam }
\end{equation*}
turns the left $U$-comodule $M$ into a right one, which illuminates why we baptised it a {\em right} Hopf-Galois comodule.
Invertibility of the Galois map of a left $U$-comodule is granted in case $(U,A)$ is a right Hopf algebroid (over a left bialgebroid). More precisely, in \cite[Lem.~2.9]{KowWeb:TSEACFHSOHA} the following statement was proven:

\begin{lemma}
	A left bialgebroid $(U, A)$ is a right Hopf algebroid if and only if any left $U$-comodule $M$ is a right Hopf-Galois comodule.
\end{lemma}

\noindent In this case, the inverse of the Galois map is given by
$$
U \otimes_A M \to M \otimes_A U,
\quad
u \otimes_A m
\mapsto
\varepsilon(m_{(-1)\smap })\cdot  m_{(0)} \otimes_A m_{(-1)\smam } u,
$$
that is, the induced right $U$-coaction on $M$ becomes 
\begin{equation}
	\label{fabercastell0}
	m_{\smap } \otimes_A m_{\smam } :=  \varepsilon(m_{(-1)\smap }) \cdot m_{(0)} \otimes_A m_{(-1)\smam },
\end{equation}
see {\em loc.~cit.} for details on how to obtain this coaction.

\begin{lemma}
\label{minestrone}
Let $(U, A)$ be a right Hopf algebroid (over a left bialgebroid) and $M\in {}^U\hskip -2.5pt\cM_U$ be a left-right Hopf module. Then the identity
\begin{equation*}
\label{scalea}
  \rho_M(mu)
= (mu)_{\smap } \otimes_A (mu)_{\smam }
  =  m_{\smap } u_{\smap } \otimes_A u_{\smam } m_{\smam }  
  \end{equation*}
holds for all $m \in M$ and $u \in U$ with respect to the right $U$-coaction
\eqref{fabercastell0}.
\end{lemma}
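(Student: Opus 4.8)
The plan is to prove the identity by applying the Galois map $\ga_M$ of \eqref{duracell} to both sides and comparing. This is permissible because $M$, being a left $U$-comodule over the right Hopf algebroid $(U,A)$ with $\due U \lact {}$ projective over $A$, is a right Hopf-Galois comodule by the preceding lemma; in particular $\ga_M$ is bijective, so two elements of $M \otimes_A U$ agree as soon as their images under $\ga_M$ do. For the left-hand side the computation is immediate: writing $\rho_M(mu) = (mu)_{\smap} \otimes_A (mu)_{\smam}$ as in \eqref{fabercastell0} and unwinding \eqref{duracell} gives $\ga_M(\rho_M(mu)) = (mu)_{\smap(-1)}\,(mu)_{\smam} \otimes_A (mu)_{\smap(0)}$, which is $1_U \otimes_A mu$ by \eqref{Mch2} evaluated at $mu$. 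Hence everything is reduced to the claim
\[
\ga_M\big(m_{\smap}\,u_{\smap} \otimes_A u_{\smam}\,m_{\smam}\big) = 1_U \otimes_A mu .
\]

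To establish this I would unwind $\ga_M$ once more, now using that $M$ is a (left-right) Hopf module: applying $\gl_M$ to the $M$-leg $m_{\smap}u_{\smap}$ and invoking the compatibility \eqref{accedi3} yields $\gl_M(m_{\smap}u_{\smap}) = \gl_M(m_{\smap})\gD(u_{\smap}) = m_{\smap(-1)}\,u_{\smap(1)} \otimes_A m_{\smap(0)}\,u_{\smap(2)}$, so that
\[
\ga_M\big(m_{\smap}\,u_{\smap} \otimes_A u_{\smam}\,m_{\smam}\big)
= m_{\smap(-1)}\,\big(u_{\smap(1)}\,u_{\smam}\big)\,m_{\smam} \otimes_A m_{\smap(0)}\,u_{\smap(2)} .
\]
The factor $u_{\smap(1)}\,u_{\smam} \otimes_A u_{\smap(2)}$ equals $1_U \otimes_A u$: this is precisely \eqref{Mch2} for the left $U$-comodule $M = U$, which is a right Hopf-Galois comodule (again by the preceding lemma, or, more directly, by the very definition of a right Hopf algebroid), bearing in mind that $\gl_U = \gD$, so $\gl_U(u_{\smap}) = u_{\smap(1)} \otimes_A u_{\smap(2)}$. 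Substituting collapses the display to $m_{\smap(-1)}\,m_{\smam} \otimes_A m_{\smap(0)}\,u$, and one last application of \eqref{Mch2}, this time to $M$ itself, rewrites $m_{\smap(-1)}\,m_{\smam} \otimes_A m_{\smap(0)}$ as $1_U \otimes_A m$, leaving $1_U \otimes_A mu$ as desired; injectivity of $\ga_M$ then finishes the proof.

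The point that genuinely needs attention — and the main obstacle in a full write-up — is the well-definedness of these manipulations over the various balanced (Takeuchi--Sweedler) tensor products. Forming the right-hand side $m_{\smap}u_{\smap} \otimes_A u_{\smam}m_{\smam}$ out of the two translation maps, and then performing the collapse of $u_{\smap(1)}u_{\smam} \otimes_A u_{\smap(2)}$ to $1_U \otimes_A u$ inside an expression in which these legs are sandwiched between $m_{\smap(-1)}$, $m_{\smam}$ and $m_{\smap(0)}$, both require realising the steps as applications of honest linear maps (for the collapse, of $U \otimes_A U \to U \otimes_A M$, $p \otimes_A q \mapsto m_{\smap(-1)}\,p\,m_{\smam} \otimes_A m_{\smap(0)}\,q$), whose compatibility with all the $A$-actions rests on the $A$-bilinearity \eqref{Mch5} and the Takeuchi condition \eqref{Mch1} for the translation map of $M$, together with their analogues for $U$, and on tracking carefully which of the pairwise coinciding $A$-module structures acts on which tensor leg; here is where the projectivity hypothesis on $\due U \lact {}$ really does its work. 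The Sweedler-level computation above is otherwise purely formal. Conceptually one may also phrase the argument by noting that the right action $M \otimes_A \due U \lact {} \to M$ is, by \eqref{accedi3}, a morphism of left $U$-comodules from the diagonal tensor-product comodule to $M$, hence intertwines the associated translation maps, while the translation map of a tensor product of right Hopf-Galois comodules is the ``anti-diagonal'' assignment $m \otimes_A u \mapsto m_{\smap} \otimes_A u_{\smap} \otimes_A u_{\smam}\,m_{\smam}$; composing with the action recovers exactly the claimed formula.
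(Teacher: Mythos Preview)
Your argument is correct and cleaner than the paper's. The paper proceeds by a direct computation: it expands $(mu)_{\smap}\otimes_A(mu)_{\smam}$ via the explicit formula \eqref{fabercastell0}, applies the Hopf-module compatibility \eqref{accedi3} and the multiplicativity \eqref{Tch6} of the translation map on $U$, and then spends several steps (using \eqref{Tch4}, the Takeuchi property \eqref{ha3}, \eqref{Tch9}, and counitality) to massage the resulting $\varepsilon$-laden expression into the form $m_{\smap}u_{\smap}\otimes_A u_{\smam}m_{\smam}$.

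Your route bypasses the explicit formula entirely: you characterise the translation map abstractly via \eqref{Mch2}, so the problem becomes checking that both sides have the same image under the bijective Galois map $\alpha_M$. This reduces the computation to one application of \eqref{accedi3}, one of \eqref{Tch2}, and one of \eqref{Mch2}, which is shorter and makes the structural reason for the identity transparent (as you note at the end, the right action is a comodule morphism, so it intertwines translation maps, and the translation map of a tensor is anti-diagonal). The paper's approach has the minor advantage of not invoking injectivity of $\alpha_M$ a second time and of staying inside $M\otimes_A U$ throughout, which keeps the well-definedness bookkeeping slightly more contained; your approach trades this for a conceptually clearer and more reusable argument. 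Both rely on the same ingredients \eqref{Mch1}, \eqref{Mch5}, \eqref{Tch1}, \eqref{Tch9} for the Takeuchi-balancing checks you flag in your last paragraph.
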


\begin{proof}
  By means of Definition \ref{frascati}\,(i) along with Eq.~\eqref{fabercastell0}, one computes:
  \begin{equation*}
    \begin{split}
      (mu)_{\smap } \otimes_A (mu)_{\smam }
      &=
      \varepsilon\big((mu)_{(-1)\smap }\big) \cdot (mu)_{(0)} \otimes_A (mu)_{(-1)\smam }
      \\
      &=
      \varepsilon(m_{(-1)\smap }u_{(1)\smap }) \cdot (m_{(0)}u_{(2)}) \otimes_A u_{(1)\smam } m_{(-1)\smam }
      \\
      &=
      \big(\varepsilon(m_{(-1)\smap }u_{(1)\smap }) \cdot m_{(0)}\big) u_{(2)} \otimes_A u_{(1)\smam } m_{(-1)\smam }
 \\
      &=
      \big(\varepsilon(m_{(-1)\smap }u_{\smap(1)}) \cdot m_{(0)}\big) u_{\smap(2)} \otimes_A u_{\smam } m_{(-1)\smam }
\\
      &=
      \pig(\varepsilon\big(\gve(u_{\smap(1)}) \blact m_{(-1)\smap }\big) \cdot m_{(0)}\pig) u_{\smap(2)} \otimes_A u_{\smam } m_{(-1)\smam }
      \\
      &=
    \big(  \varepsilon(m_{(-1)\smap }) \cdot m_{(0)} \cdot \gve(u_{\smap(1)}) \big) u_{\smap(2)} \otimes_A u_{\smam } m_{(-1)\smam }
      \\
      &=
    \big(  \varepsilon(m_{(-1)\smap }) \cdot m_{(0)}\big)\big(\gve(u_{\smap(1)}) \lact u_{\smap(2)}\big) \otimes_A u_{\smam } m_{(-1)\smam }
    \\
      &=
     m_{\smap }  u_{\smap } \otimes_A u_{\smam } m_{\smam },
          \end{split}
  \end{equation*}
  where in the second step we used
both  Definition \ref{frascati}, part\,(i)\,(c), and \eqref{Tch6}, in step three Definition \ref{frascati}, part\,(i)\,(b), 
then Eq.~\eqref{Tch4} in the fourth step,
the  generalised-character-like properties
  of a bialgebroid counit in the fifth,
the 
Takeuchi property \eqref{ha3} for a left $U$-comodule
along with \eqref{Tch9} in the sixth,
Definition \ref{frascati}, part\,(i)\,(c),
and again 
Definition \ref{frascati}, part\,(i)\,(b), in the seventh, and finally
 counitality of $U$ along with \eqref{fabercastell0} in the eighth.
 \end{proof}
  
Of course, the same game could be played if one starts from a right $U$-comodule $(N, \rho_N)$ with right coaction denoted $\rho_N \colon N \to N \otimes_A \due U \lact {}, \ n \mapsto n_{[0]} \otimes_A n_{[1]}$ to define a Galois map by
\begin{equation}
  \label{duracell2}
  \alpha_N \colon U_\ract \otimes_A N \to N \otimes_A \due U \lact {}, \quad u \otimes_A n
   \mapsto n_{[0]}  \otimes_A n_{[1]}u,
  \end{equation}
of which the possible invertibility yields the notion of {\em left} Hopf-Galois comodule over $U$ by asking for the invertibility of \eqref{duracell2} and therefore writing
$$
n_- \otimes_A n_+ := \alpha_N^{-1}(n \otimes_A 1)
$$
for the translation map $N \to U_\ract \otimes_A N$ on $N$. One analogously obtains the following identities for $a ,a'  \in A$ and $n \in N$:
\reversemarginpar
  \begin{align}
\label{Nch1}
    n_- \otimes_A n_+ &\in U \times_A N,
    \\
    \label{Nch2}
n_{+ [0]} \otimes_A n_{+ [1]} n_-
&= n \otimes_A 1,
\\
\label{Nch3}
 n_{[0]- } n_{[1]} \otimes_A n_{[0] + }
&= 1 \otimes_A n,
\\
\label{Nch4}
n_{-}\otimes_A n_{+- }\otimes_A n_{++}
&=n_{-(1)}\otimes_A n_{- (2)}\otimes_A n_{+},
\\
\label{Nch5}
(ana' )_- \otimes_A (ana' )_+
&= a \lact n_- \bract a' \otimes_A n_+,
\\
\label{Nch6}
\varepsilon(n_-) n_+
&=m,
  \end{align}
  where in \eqref{Nch1}  the {\em Takeuchi-Sweedler} subspace (or product) $U \times_A N$ is defined as
%\begin{small}
  $$
  U \times_A N := \big \{\Sum_i u_i \otimes_A n_i \in U \otimes_A N \mid
\Sum_i  u_i \otimes_A n_ia = \Sum_i a \blact u_i \otimes_A n_i, \forall  a \in A
  \big\}.
$$
%\end{small}
Similar to the considerations above, it follows from \eqref{Nch4} and \eqref{Nch6} that the translation map
\begin{equation}
	\label{fabercastell2}
\gl_N := \ga^{-1}_N (- \otimes_A 1) \colon N \to U_\ract \otimes_A N, \quad n \mapsto n_- \otimes_A n_+
\end{equation}
turns the right $U$-comodule $(N, \rho_N)$ we started with into a left one, which justifies the terminology {\em left} Hopf-Galois comodule.
Again, invertibility of the Galois map of a left $U$-comodule is automatic if $U$ is a {\em left} Hopf algebroid, or explicitly 
$$
N \otimes_A \due U \lact {} \to U_\ract \otimes_A N,
\quad
n \otimes_A u
\mapsto
n_{[1]- } u \otimes_A n_{[0]} \varepsilon(n_{[1]+})
$$
such that
\begin{equation}
  \label{fabercastell3}
\gl_N(n) := n_- \otimes_A n_+ := n_{[1]- } \otimes_A n_{[0]} \varepsilon(n_{[1]+})
\end{equation}
is a left $U$-coaction on $N$. A result analogous to Lemma \ref{minestrone} then states that for a left Hopf algebroid $(U,A)$ and a {\em right-right} Hopf module $N$, one obtains the identity
\begin{equation}
  \label{scalea2}
  \gl_N(nu)
= (nu)_- \otimes_A (nu)_+
=
u_- n_-  \otimes_A
n_+ u_+ 
  \end{equation}
with respect to the left $U$-coaction
\eqref{fabercastell2},
for all $n \in N$ and $u \in U$.

Next, for Hopf modules (of any kind) with invertible Hopf-Galois map let us formulate and prove a {\em Fundamental Theorem of Hopf modules}, which reduces to the already mentioned fundamental theorem of Hopf modules \cite[Thm.~4.1.1]{Swe:HA} in the case of Hopf algebras. In order to discuss this,
let us introduce first for a left $U$-comodule $(M,\gl_M)$ by
 \begin{equation}
   \label{waterman}
   {}^{\mathrm{co} \, U}\! M:= \{m\in M \mid \gl_M(m) = 1 \otimes_A m \}
 \end{equation}
 the $k$-module of {\em (left) $U$-coinvariant} elements.
 {\em Right} coinvariants $N^{\mkern 1mu \mathrm{co} \, U}$ for a right $U$-co\-mod\-ule $N$ are defined analogously. In the bialgebroid setting, we can then state:   

\begin{theorem}
  \label{spinoza}
Let $(U,A)$ be a left bialgebroid such that $U_\ract$ is flat as a right module over $A$, and
  let $M \in {}^U\mkern -5mu\cM_U$ be a right-left Hopf module that is also a right Hopf-Galois comodule in the sense of Definition \ref{minestra}.
  Then
\begin{enumerate}
 \compactlist{50}
\item
  $m_{\smap }m_{\smam }\in \coM$ for all $m\in M$;
\vskip 1.2pt
\item
 $\coM$ becomes a left $A$-module as well via $a \cdot m = a\blact {m}= {m}t(a)$ for $m \in \coM$ and $a \in A$;
  \vskip 1.2pt
\item if $(U,A)$ is a right Hopf algebroid (over a left bialgebroid), then
the functors
  $$
  {}^{\co\,U} \! (-) \colon {}^U\mkern -5mu\cM_U
\quad \raisebox{-3pt}{$\longleftrightarrows$} \quad {}_A \cM \ \colon \!  U_\ract \otimes_A - 
  $$
are mutually adjoint and induce an equivalence of categories, with counit given by the isomorphism 
  $$
\xi \colon  U_\ract  \otimes_A \coM \to  M, \quad u\otimes_A {m}\mapsto {m}u, 
$$
in the category ${}^U\mkern -5mu\cM_U$ of right-left Hopf modules of which the inverse reads
$$
\xi^{-1} \colon
M \to U_\ract  \otimes_A \coM, \quad
m\mapsto m_{(-1)}\otimes_A m_{(0)\smap } m_{(0)\smam }.
$$
The
unit of the adjunction is given by the isomorphism
$
\eta \colon X \to {}^{\co\,U} \! (U_\ract \otimes_A X),
\
x \mapsto 1_U \otimes_A x,
$
of which the inverse reads
$
%\eta^{-1} \colon  {}^{\co\,U} \! (U\otimes_A V)
%\to {}_A\cM,
%\quad
u \otimes_A x \mapsto \varepsilon(u) \cdot x.
$
\end{enumerate}
\end{theorem}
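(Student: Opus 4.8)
The plan is to prove the three parts of Theorem~\ref{spinoza} in order, since each relies on the previous one. For part~(i), I would take $m \in M$ and compute $\gl_M(m_{\smap} m_{\smam})$ directly. Using the left $U$-colinearity of $\gl_M$ combined with Lemma~\ref{minestrone} (which expresses $\rho_M(mu)$ in terms of the translation map) and the relation \eqref{Mch2}, the coaction should collapse to $1 \otimes_A (m_{\smap} m_{\smam})$. Concretely, applying $\gl_M$ and using that $M \in {}^U\mkern -5mu\cM_U$ is a right-left Hopf module so that $\gl_M$ is compatible with the right $U$-action via \eqref{accedi3}, one rewrites $\gl_M(m_{\smap} m_{\smam}) = \gl_M(m_{\smap}) \Delta(m_{\smam})$ (interpreted correctly in $U_\ract \times_A M$) and then invokes \eqref{Mch4} to shuffle the comultiplication onto the translation-map legs, finally collapsing via \eqref{Mch2}. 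Part~(ii) is essentially formal: one checks that the two expressions $a \blact m$ and $mt(a)$ coincide on coinvariants (this follows from the Hopf-module axioms in Definition~\ref{frascati}\,(i), specifically the coincidence (a) and commutativity (b) clauses), and that the resulting left $A$-action preserves ${}^{\co\,U}\!M$, which amounts to checking $\gl_M(a \blact m) = 1 \otimes_A (a \blact m)$ for coinvariant $m$ --- immediate from the Takeuchi-type property \eqref{Mch5} of the coaction together with $\varepsilon$ being a left character.

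\emph{For the core of part~(iii),} I would first verify that $\xi$ and $\xi^{-1}$ as written are well-defined maps of right-left Hopf modules, then that they are mutually inverse, and finally that $(\eta, \xi)$ satisfy the triangle identities making $({}^{\co\,U}\!(-), U_\ract \otimes_A -)$ an adjoint equivalence. Well-definedness of $\xi \colon U_\ract \otimes_A {}^{\co\,U}\!M \to M$ over $A$ uses the projectivity hypothesis on $U_\ract$ only indirectly (it guarantees the Galois map is invertible via the right Hopf algebroid structure, per the Lemma preceding \ref{minestrone}); the balancing over $A$ follows from part~(ii). That $\xi$ is left $U$-colinear is where Lemma~\ref{minestrone} does the real work: $\gl_M(mu) = u_- m_- \otimes_A m_+ u_+$ --- wait, more precisely one uses $\rho_M$ and the right-left Hopf module structure to show $\gl_M(\xi(u \otimes_A m)) = \gl_M(mu)$ equals $\Delta(u) \cdot (1 \otimes_A m) = u_{(1)} \otimes_A (u_{(2)} \cdot m)$ in $U_\ract \times_A M$, i.e.\ $\xi$ intertwines the left coaction on $U_\ract \otimes_A {}^{\co\,U}\!M$ (the diagonal one, coming from $\Delta$ on the first factor and the trivial coaction on coinvariants) with $\gl_M$. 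Right $U$-linearity of $\xi$ is immediate from associativity of the $U$-action. For $\xi^{-1}$, one must check the formula $m \mapsto m_{(-1)} \otimes_A m_{(0)\smap} m_{(0)\smam}$ indeed lands in $U_\ract \otimes_A {}^{\co\,U}\!M$ (the second leg is coinvariant by part~(i) applied to $m_{(0)}$), and that $\xi \circ \xi^{-1} = \id_M$ and $\xi^{-1} \circ \xi = \id$. The first composite unwinds to $m_{(-1)\smap}\,m_{(-1)\smam}\,\cdots$ type expressions and collapses using \eqref{Mch3} together with the compatibility of $\gl_M$ with $\rho_M$ (both present on the Hopf-Galois comodule); the second uses \eqref{Mch2} and \eqref{Mch6}. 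The unit $\eta$ and its inverse $u \otimes_A x \mapsto \varepsilon(u) \cdot x$ are visibly mutually inverse by counitality of $U$, and naturality plus the triangle identities are then routine bookkeeping.

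\textbf{The main obstacle} I anticipate is \emph{not} the formal category theory but the careful management of the four-to-eight $A$-module structures and the Takeuchi--Sweedler subspaces throughout: every tensor product $\otimes_A$ is over a specific one of $s, t, \blact, \lact$, and expressions like $\gl_M(mu) = \gl_M(m)\Delta(u)$ only make sense in $U_\ract \times_A M$, not in the naive tensor product. In particular, verifying that $\xi^{-1}$ is well-defined --- that $m \mapsto m_{(-1)} \otimes_A (m_{(0)\smap} m_{(0)\smam})$ respects the $\otimes_A$ balancing and that the second leg is genuinely coinvariant --- requires threading \eqref{Mch5} (the Takeuchi property of the translation map) against the bimodule axioms \eqref{rain}-type relations of Definition~\ref{frascati}\,(i), and the bookkeeping there is delicate. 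A secondary subtlety is that projectivity of $U_\ract$ is used precisely to ensure $U \otimes_A -$ is exact and that $\eta_X$ is an isomorphism for all $X$ (not just that it has the right formula); one should be careful to invoke the preceding Lemma (granting invertibility of the Galois map from the right Hopf algebroid structure) at exactly the right point rather than assuming Hopf-Galois-ness of arbitrary comodules silently.
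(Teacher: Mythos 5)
Your proposal follows essentially the same route as the paper: part (i) via the Hopf-module condition \eqref{accedi3} followed by \eqref{Mch4} and \eqref{Mch2}, part (ii) directly from \eqref{accedi3}, and part (iii) by exhibiting $\xi$ and $\xi^{-1}$ as mutual inverses using \eqref{Mch3} for one composite and Lemma \ref{minestrone} together with the translation-map identities for the other. The only (harmless) discrepancies are bookkeeping ones --- the paper's computation of $\xi^{-1}\circ\xi$ actually runs through Lemma \ref{minestrone}, \eqref{Tch7}, \eqref{Mch5} and the coinvariance identity \eqref{schnief} rather than \eqref{Mch2}/\eqref{Mch6}, and preservation of coinvariance in part (ii) comes straight from \eqref{accedi3} rather than \eqref{Mch5} --- but the strategy and key ingredients coincide.
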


\begin{proof}%[Proof of Theorem \ref{spinoza}]
    That $a \cdot m = a \blact m  = m t(a) \in \coM$ if $m \in \coM$ is obvious from the defining condition $\gl_M(mu) = \gl_M(m)\gD(u)$ in \eqref{accedi3} for a right-left Hopf module.
  Equally direct from this condition is the statement that $m_\smap m_\smam \in \coM$ for all $ m \in M$: one has
  \begin{equation*}
    \begin{split}
      \gl_M(m_\smap m_\smam) = \gl_M(m_\smap)\gD(m_\smam)
      &= m_{\smap (-1)} m_{\smam(1)} \otimes_A m_{\smap(0)} m_{\smam(2)}
      \\
      &= m_{\smap\smap (-1)} m_{\smap\smam} \otimes_A m_{\smap\smap(0)} m_{\smam}
      \\
      &= 1 \otimes_A m_\smap m_\smam,
          \end{split}
    \end{equation*}
  where we used Eqs.~\eqref{Mch4} resp.\ \eqref{Mch2} in the third resp.\ fourth step. We are left with showing that $\xi$ is invertible with inverse given by the map we already called $\xi^{-1}$: one computes
  $$
  (\xi \circ \xi^{-1})(m) = \xi(m_{(-1)} \otimes_A m_{(0)\smap} m_{(0)\smam} )
  =
  m_{(0)\smap} m_{(0)\smam}  m_{(-1)} = m,
  $$
 where we used \eqref{Mch3}.
  On the other hand, using that $M \in {}^U\mkern -5mu\cM_U$ is a right-left Hopf module in the second step, and  
Lemma \ref{minestrone}
  in the third, while  Eqs.~\eqref{Tch7} \& \eqref{Mch5} along with Definition \ref{frascati}, part (ii)\,(a), are used in step four, we compute:  
  \begin{equation*}
    \begin{split}
      (\xi^{-1} \circ \xi)(m \otimes_A u) = \xi^{-1}(mu)
      &=
      (mu)_{(-1)} \otimes_A (mu)_{(0)\smap} (mu)_{(0)\smam} 
      \\
      &
      =
      m_{(-1)}u_{(1)} \otimes_A (m_{(0)}u_{(2)})_\smap (m_{(0)}u_{(2)})_\smam
      \\
       &=
       m_{(-1)}u_{(1)} \otimes_A m_{(0)\smap}u_{(2)\smap} u_{(2)\smam} m_{(0)\smam} 
 \\
 &=
       m_{(-1)}u_{(1)} \otimes_A \big(m_{(0)}\cdot \gve(u_{(2)})\big)_\smap \big(m_{(0)}\cdot \gve(u_{(2)})\big)_\smam
 \\
 &=
       m_{(-1)}u \otimes_A m_{(0)\smap} m_{(0)\smam}
 \\
&=     u \otimes_A m_{\smap} m_{\smam}
 \\
&=       u \otimes_A m,
    \end{split}
  \end{equation*}
  where moreover in step five the Takeuchi property of the left coaction $\gl_M$ along with counitality of the coproduct was used, while in step six the fact that $m \in  \coM$ was needed, while in step seven we used, in addition, that
  \begin{equation}
    \label{schnief}
    m_\smap \otimes_A m_\smam = m \otimes_A 1_U, \quad \forall\, m \in  \coM
\end{equation}
holds true.
  Hence, $\xi$ and $\xi^{-1}$ are mutually inverse. The last statement follows from the preceding ones once established that the respective morphisms are transformed into another, which is left to the reader.
     \end{proof}

\begin{rem}
  \label{nunzia}
For a proof of the above theorem in analogous situations such as, for example, the category ${}_U^{\mkern 2mu U}\mkern -1mu\cM$ of left-left Hopf modules, one rather uses homological arguments instead of explicit maps, see \cite[Thm.~5.0.2]{Che:ITFLHLB}. For full Hopf algebroids, equipped with a sort of antipode that mediates between a left and a right bialgebroid structure, a fundamental theorem of similar kind has been proven in \cite[Thm.~4.2]{Boe:ITFHA}. More general statements appear in \cite[Thm.~5.6]{Brz:TSOC}, see also \cite[\S28.19]{BrzWis:CAC} in the setting of corings, or still more general using bimonads in  \cite[Thm.~6.11]{BruLacVir:HMOMC} or \cite[Thm.~5.3.1]{AguCha:GHMFB}.
\end{rem}

\section{Categorical equivalences}
\label{tetrayd}

\subsection{Hopf bimodules versus YD modules}
In this section, we would like to transfer the classical categorical equivalence between Hopf bimodules and Yetter-Drinfel'd modules for Hopf algebras that appeared in \cite[Cor.~6.4]{Schau:HMAYDM} to the realm of Hopf algebroids.

\subsubsection{The Yetter-Drinfel'd category and centre constructions}
%\hspace*{-.3cm}
A {\em left-right Yetter Drinfel'd} module $P$ over a left bialgebroid $(U,A)$ is
simultaneously a left $U$-module (with action denoted by juxtaposition) and a right $U$-comodule with coaction $\gr_P \colon P \to P \otimes_A \due U \lact {}, \ p \mapsto p_{[0]} \otimes_A p_{[1]}$ such that the two forgetful functors $\umod \to {}_\Ae \cM$ and $\comodu \to {}_\Ae \cM$ induce the same $A$-bimodule structure on $P$,
\begin{equation}
  \label{ydforget2}
a \lact p \ract a' = apa'
\end{equation}
for $a, a' \in A$ and $p \in P$,
and such that between $U$-action and $U$-coaction 
 the compatibility
\begin{equation}
  \label{yd2}
u_{(1)} p_{[0]} \otimes_A u_{(2)} p_{[1]} = (u_{(2)} p)_{[0]}  \otimes_A  (u_{(2)} p)_{[1]}u_{(1)}
\end{equation}
holds for all $u \in U$, $p \in P$. The corresponding category $\ydu$ is monoidal with respect to $\otimes_A$, unit object the base algebra $A$, left $U$-action given by the diagonal action, and right $U$-coaction using the codiagonal coaction, but with a flip; that is, for any
for $P,Q \in \ydu$, % and any $p \in P, q \in Q$,
one takes
\begin{equation}
  \label{carolafelsen}
P \otimes_A Q \to (P \otimes_A Q) \otimes_A U, \quad
p \otimes_A q \mapsto (p_{(0)} \otimes_A q_{(0)}) \otimes_A q_{(1)} p_{(1)}.
\end{equation}
As expected, this category is braided, with braiding
\begin{equation}
  \label{ydbraid}
\gs_{P,Q} \colon P \otimes_A Q \to Q \otimes_A P, \quad p \otimes_A q \mapsto q_{[0]} \otimes_A q_{[1]} p,
  \end{equation}
commonly known as {\em (left-right) Yetter-Drinfel'd braiding}. 
As this braiding for a general left bialgebroid is not necessarily invertible (but is so if the left bialgebroid is a left Hopf algebroid such that $Q$ becomes a left Hopf-Galois module, in which case the inverse is given by $q \otimes_A p \mapsto q_- p \otimes_A q_+$), we sometimes underline this fact by calling the braiding a {\em pre-}braiding.
Observe that
one can still define {\em left-left} YD modules, but in contrast to the case of bialgebras,
there are {\em no} right-left or right-right YD modules over left bialgebroids.

%it can be shown that it is (monoidally) equivalent to the (right weak) monoidal centre of the category $\umod$.

\begin{prop}
\label{endlichwaermerjetzt?}
  Let $(U,A)$ be a left bialgebroid.
  \begin{enumerate}
    \compactlist{50}
  \item
Then there is a monoidal equivalence of categories between $\ydu$ and the (right weak) centre $\cZ^r({}_U \cM)$ of the category of left $U$-modules.
  \item
    There is also a monoidal equivalence of categories between $\ydu$ and the (left weak) centre $\cZ^\ell(\cM^U)$ of the category of right $U$-comodules.
    \end{enumerate}   
\end{prop}

\begin{proof}
  \
  \begin{enumerate}
    \compactlist{50}
  \item
This has been proven in \cite[Prop.~4.4]{Schau:DADOQGHA} for the category of left-left Yetter-Drinfel'd modules, the proof of which can be easily adapted to the present situation, turning from the left weak centre to the right one.
  \item
The monoidal structure in $(\cM^U, \otimes_A, A)$ here is as in Eq.~\eqref{carolafelsen}, that is, with a flip; the bialgebroid $U$ itself with its coproduct is in $\cM^U$ if equipped with the $A$-bimodule structure $\due U \blact \ract$, that is, by means of the target maps.
    Let then $(P, \gs_{P,-})$ be an object in  $\cZ^\ell(\cM^U)$, which, in particular, means that for the {\em central structure} $\gs_{P,-}$ the hexagon axiom
    \begin{equation}
      \label{hexagon}
      \gs_{P, M \otimes_A N} =
      (M \otimes_A \gs_{P, N}) \circ (\gs_{P, M} \otimes_A N) 
    \end{equation}
    holds for all $M, N \in \cM^U$, suppressing all associators. We claim that 
    \begin{equation}
      \label{packnpost}
(\gve \otimes_A P) \circ \gs_{P,U} \colon P \otimes_A U \to P,
    \end{equation}
    despite the order in which it is written,    defines a {\em left} $U$-action on $P$, suppressing the canonical isomorphism $A \otimes_A P \simeq P$, and that with this left $U$-action $P \in \cM^U$ becomes a left-right YD module over $U$. To see this, let us show the associativity of \eqref{packnpost}: for $u, v \in U$ and $p \in P$, and denoting by $\mu^\op$ the opposite multiplication in $U$, we have
  \begin{eqnarray*}
(vu)p 
&
  {\overset{\scriptscriptstyle{
\eqref{packnpost}
}}{=}}
    &
\big((\gve \otimes_A P) \circ \gs_{P,U} \circ (P \otimes_A \mu^\op)\big)\big(p \otimes_A (u \otimes_A v) \big).
  \end{eqnarray*}
    From the naturality of $\gs$, we have 
    $
    \gs_{P,U} \circ (P \otimes_A \mu^\op) =
    (\mu^\op \otimes_A P) \circ \gs_{P,U \otimes_A U}
    $
since $\mu^\op$ is a morphism in $\cM^U$ with respect to the monoidal structure \eqref{carolafelsen}, and therefore
  \begin{eqnarray*}
(\gve \otimes_A P) \circ \gs_{P,U} \circ (P \otimes_A \mu^\op)
&
  {\overset{\scriptscriptstyle{
%\eqref{camelion}
}}{=}}
    &
(\gve \otimes_A P) \circ (\mu^\op \otimes_A P) \circ \gs_{P,U \otimes_A U}
  \\
&
  {\overset{\scriptscriptstyle{
\eqref{hexagon}
}}{=}}
    &
  ((\gve \circ \mu^\op) \otimes_A P) \circ
  (U \otimes_A \gs_{P, U}) \circ (\gs_{P, U} \otimes_A U). 
 \end{eqnarray*}
On the other hand, 
  \begin{eqnarray*}
v(up) 
&
  {\overset{\scriptscriptstyle{
\eqref{packnpost}
}}{=}}
    &
\big((\gve \otimes_A P) \circ \gs_{P,U}\big) \big(up \otimes_A v \big).
\\
&
  {\overset{\scriptscriptstyle{
\eqref{packnpost}
}}{=}}
    &
  \pig((\gve \otimes_A P) \circ \gs_{P,U} \circ \big((\gve \otimes_A P) \otimes_A U\big) \circ  (\gs_{P,U} \otimes_A U) \pig)
  \pig((p \otimes_A u) \otimes_A v \pig).
  \end{eqnarray*}
  Considering that $\gs_{P,U}$ is a morphism in $\cM^U$, therefore a morphism in ${}_\Ae \cM$, and hence, in particular,
  $\gs_{P,U} (ap \otimes_A u) = a \blact \gs_{P,U}(p \otimes_A u)$, where on the right hand side the left $A$-action is on the first tensor factor, we obtain:
 \begin{eqnarray*}
&&
   (\gve \otimes_A P) \circ \gs_{P,U} \circ \big((\gve \otimes_A P) \otimes_A U\big) \circ  (\gs_{P,U} \otimes_A U)
\\
&
  {\overset{\scriptscriptstyle{
    }}{=}}
&
  (\gve \otimes_A P)
  \circ (\mu^\op \otimes_A P) \circ
 (t \gve \otimes_A U \otimes_A P)
  \circ (U \otimes_A \gs_{P,U})
  \circ  (\gs_{P,U} \otimes_A U)
\\
&
  {\overset{\scriptscriptstyle{
    }}{=}}
  &
  ((\gve \circ \mu^\op) \otimes_A P)
\circ (U \otimes_A \gs_{P,U})
  \circ  (\gs_{P,U} \otimes_A U), 
 \end{eqnarray*}
 where the last step follows from $\gve(uv) = \gve(u t\gve(v))$ in a left bialgebroid, see \eqref{counityippieh}. By comparison, we observe that $(vu)p = v(up)$, and therefore \eqref{packnpost} defines a left $U$-action.

 Let us show next the defining identity \eqref{yd2} of a left-right YD module with respect to this left action on $(P, \gs_{P,-}) \in \cZ^\ell(\cM^U)$ and its natural right coaction. To this end, for a right comodule $M \in \cM^U$ with coaction $\rho_M$, consider first the map $M \otimes_A \gve \colon M \otimes_A \due U \lact {} \to M$, which is a right $U$-comodule map if $M \otimes_A \due U \lact {}$ is seen as a right comodule via the map $m \otimes_A u \mapsto (m_{[0]} \otimes_A 1_U) \otimes_A m_{[1]} \ract \gve(u)$. Furthermore, for every fixed $m \in M$, consider the map $L_m \colon U \to M \otimes_A \due U \lact {}, \ u \mapsto m \otimes_A u$, which is also a map of right comodules if $M \otimes_A \due U \lact {}$ is equipped with the right coaction $M \otimes_A \gD$. Using then the naturality of $\gs$ with respect to these two maps along with counitality, we have for any $M \in \cM^U$ and $m \in M, p \in P$:
 \begin{eqnarray*}
\gs_{P,M} (p \otimes_A m)
&
  {\overset{\scriptscriptstyle{
    }}{=}}
&
\gs_{P,M} \big(p \otimes_A m_{[0]} \gve(m_{[1]})\big)
\\
&
  {\overset{\scriptscriptstyle{
    }}{=}}
  &
  \pig(\gs_{P,M} \circ \big(P \otimes_A (M \otimes_A \gve)\big)\pig)\big(p \otimes_A (m_{[0]} \otimes_A m_{[1]}) \big)
\\
  &
  {\overset{\scriptscriptstyle{
    }}{=}}
  &
  \pig(\big((M \otimes_A \gve) \otimes_A P \big) \circ \gs_{P,M \otimes_A U}  \pig)\big(p \otimes_A (m_{[0]} \otimes_A m_{[1]}) \big)
\\
  &
  {\overset{\scriptscriptstyle{
    }}{=}}
  &
  \pig(\big((M \otimes_A \gve) \otimes_A P \big) \circ \gs_{P,M \otimes_A U}  \circ (P \otimes_A L_{m_{[0]}}) \pig)\big(p \otimes_A m_{[1]} \big)
\\
  &
  {\overset{\scriptscriptstyle{
    }}{=}}
  &
  \pig(\big((M \otimes_A \gve) \otimes_A P \big) \circ (L_{m_{[0]}} \otimes_A P) \circ \gs_{P, U}
  \pig)
  \big(p \otimes_A m_{[1]} \big)
\\
  &
  {\overset{\scriptscriptstyle{
    }}{=}}
  &
\big((M \otimes_A \gve) \otimes_A P \big)\big( m_{[0]}  \otimes_A \gs_{P, U}(p \otimes_A m_{[1]}) \big)
\\
  &
  {\overset{\scriptscriptstyle{
    }}{=}}
  &
 m_{[0]}  \otimes_A \big((\gve \otimes_A P ) \circ
  \gs_{P, U}\big)(p \otimes_A m_{[1]}) 
\\
  &
  {\overset{\scriptscriptstyle{
\eqref{packnpost}
    }}{=}}
  &
 m_{[0]}  \otimes_A m_{[1]} p.
 \end{eqnarray*}
 With this, and since $\gs_{P,U}$ is right $U$-colinear, one computes
 \begin{eqnarray*}
 u_{(1)} p_{[0]} \otimes_A u_{(2)} p_{[1]}
  &
  {\overset{\scriptscriptstyle{
\eqref{packnpost}
    }}{=}}
  &
\big((\gve \otimes_A P) \circ \gs_{P,U}\big)(p_{[0]} \otimes_A  u_{(1)})  \otimes_A u_{(2)} p_{[1]}
\\
  &
  {\overset{\scriptscriptstyle{
\eqref{carolafelsen}
    }}{=}}
  &
\big(\gve \otimes_A P \otimes_A U\big)\big(\gs_{P,U}(p \otimes_A  u)_{[0]}  \otimes_A \gs_{P,U}(p \otimes_A  u)_{[1]}\big)
\\
  &
  {\overset{\scriptscriptstyle{
    }}{=}}
  &
\big(\gve \otimes_A P \otimes_A U\big)\big((u_{(1)} \otimes_A u_{(2)}p)_{[0]}  \otimes_A (u_{(1)} \otimes_A u_{(2)}p)_{[1]}\big)
\\
  &
  {\overset{\scriptscriptstyle{
\eqref{carolafelsen}
    }}{=}}
  &
  \big(\gve \otimes_A P \otimes_A U\big)\big((u_{(1)} \otimes_A (u_{(3)}p)_{[0]})  \otimes_A (u_{(3)}p)_{[1]}u_{(2)}\big)
  \\
  &
  {\overset{\scriptscriptstyle{
\eqref{hak}
    }}{=}}
  &
 (u_{(2)} p)_{[0]}  \otimes_A  (u_{(2)} p)_{[1]}u_{(1)},
\end{eqnarray*}
 and hence $P$ fulfills the identity \eqref{yd2} and is therefore a left-right YD module, as desired.
 
The more obvious verification that one also obtains a functor $\ydu \to \cZ^\ell(\cM^U)$ is omitted.
    \end{enumerate}   
  \end{proof}

\subsubsection{Free Hopf (bi)modules}
%\hspace*{-.3cm}
Guided by the spirit that, according to the Fundamental Theorem \ref{spinoza}, one can see Hopf modules in a certain sense as {\em free}, let us consider Hopf modules of this particular type first:

\begin{lem}
  \label{aeun}
  Let $P \in \ydu$ be a left-right YD module over a left bialgebroid $(U,A)$. Then the tensor product $U_\ract \otimes_A P$ is a Hopf bimodule by means of the following structure maps:
  \begin{equation}
\label{mist}
\begin{array}{rcl}
  w(u \otimes_A p) &:=& w_{(1)} u \otimes_A w_{(2)} p,
  \\
  (u \otimes_A p)w &:=& uw \otimes_A p,
  \\
  \gl(u \otimes_A p) &:=& u_{(1)} \otimes_A (u_{(2)} \otimes_A p),
  \\
  \gr(u \otimes_A p) &:=& (u_{(1)} \otimes_A p_{[0]}) \otimes_A p_{[1]} u_{(2)},
  \end{array}
  \end{equation}
  for all $p \in P$ and $u, w \in U$.
 %%  In particular, 
%%   one has
%%   $$
%% {}^{\co\,\,U} (U_\ract \otimes_A P) \simeq P
%% $$
%% with respect to the left $U$-coaction.
  \end{lem}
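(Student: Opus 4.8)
The plan is to verify that the four pieces of structure defined in \eqref{mist} on $U_\ract \otimes_A P$ satisfy all the axioms of Definitions \ref{genzano}, \ref{frascati}, and \ref{tetra}. I would organise the verification into three blocks: first the module and comodule axioms in isolation, then the four Hopf-compatibility conditions \eqref{accedi1}--\eqref{accedi4} (equivalently \eqref{sunshine1}--\eqref{sunshine2}), and finally the $A$-action identifications \eqref{rain} together with the well-definedness of each formula over the relevant balanced tensor products. Throughout, the key external inputs are the YD compatibility condition defining $\ydu$ (from the appendix), the bialgebroid axioms (coassociativity, counitality, the Takeuchi property \eqref{ha3}, and the source/target relations), and the fact that $U$ itself is a Hopf bimodule over itself.

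First I would check that $U \otimes_A P$ is a $U$-bimodule: left $U$-linearity of $w \cdot (-)$ follows from coassociativity of $\gD$ and associativity of the $U$-action on $P$; right $U$-linearity of $(-)\cdot w$ is immediate from associativity of multiplication in $U$; and the left and right actions commute because $w_{(1)}(uv) \otimes_A w_{(2)} p = (w_{(1)}u)v \otimes_A w_{(2)}p$. Dually, $\gl$ makes $U \otimes_A P$ a left $U$-comodule by coassociativity (the $P$-leg is untouched), and $\gr$ makes it a right $U$-comodule by coassociativity of $\gD$ together with coassociativity of the right $U$-coaction on $P$; the left and right coactions cocommute for the analogous reason. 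At this stage I would also pin down the four $A$-actions on $U \otimes_A P$ coming from the two actions and two coactions and confirm the coincidences in \eqref{rain}: e.g. the right $A$-action induced by $\gr$ sends $u\otimes_A p$ to $s(a)u_{(1)}\otimes_A p_{[0]} \cdot$ something, which must be matched against $t(a)$ acting via the left $U$-action, using the source/target compatibilities and the Takeuchi condition defining $M \times_A U$; this is where the YD condition on $P$ (which mixes its action and coaction) first gets used in an essential way.

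Next comes the heart of the matter: the four mixed axioms. The two involving $\gl$, namely $\gl(w(u\otimes_A p)) = w \mancino \gl(u\otimes_A p)$ and $\gl((u\otimes_A p)w) = \gl(u\otimes_A p)\pancino w$, are essentially formal since $\gl$ only involves the $U$-leg and its own coproduct, so they reduce to $\gD$ being an algebra map into $U \times_A U$ — exactly the statement that $U$ is a left-left and right-left Hopf module over itself. The two involving $\gr$ are the genuinely nontrivial ones. For $\gr(w(u\otimes_A p)) = w\mancino \gr(u\otimes_A p)$ one expands the left side using $w(u\otimes_A p)=w_{(1)}u\otimes_A w_{(2)}p$ and the formula for $\gr$, producing $(w_{(1)(1)}u_{(1)} \otimes_A (w_{(2)}p)_{[0]}) \otimes_A (w_{(2)}p)_{[1]}w_{(1)(2)}u_{(2)}$; one then has to recognise this, after applying the YD condition on $P$ (which rewrites $(wp)_{[0]}\otimes_A(wp)_{[1]}$ in terms of $w_{(1)}p_{[0]}$, $w_{(3)}$, and $w_{(2)}$, $p_{[1]}$ — in the precise bialgebroid form given in the appendix) and reindexing via coassociativity, as $w\mancino\big((u_{(1)}\otimes_A p_{[0]})\otimes_A p_{[1]}u_{(2)}\big)$. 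For $\gr((u\otimes_A p)w) = \gr(u\otimes_A p)\pancino w$ one similarly expands $uw\otimes_A p$ and compares, this time the manipulation being lighter since $w$ enters only through multiplication.

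The main obstacle I anticipate is exactly the $\gr$-versus-left-action compatibility: it is the one place where the Yetter-Drinfel'd axiom must be deployed in its full bialgebroid generality, and bookkeeping the four or five Sweedler legs across the Takeuchi subspace $M\times_A U$ — while simultaneously making sure every expression is well-defined over $\otimes_A$ at each intermediate step (the balancing involves $s,t$ and must be tracked carefully, cf.\ the remark following Definition \ref{tetra}) — is where errors creep in. A secondary, more bureaucratic obstacle is checking well-definedness of all four maps in \eqref{mist} on the balanced tensor products and confirming that they land in the correct Takeuchi-Sweedler products $U\times_A(U\otimes_A P)$ and $(U\otimes_A P)\times_A U$; this is routine but needs the source/target and Takeuchi identities. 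Once the four mixed conditions and the $A$-action coincidences are in place, membership in each of $ {}_U\cM^U$, $\cM^U_U$, ${}_U^{\mkern 2mu U}\cM$, ${}^U\cM_U$, and hence in ${}_U^{\mkern 2mu U}\cM^U_U$ by Definition \ref{tetra}, is immediate, and functoriality in $P$ is clear since all structure maps are visibly natural.
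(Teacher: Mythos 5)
Your proposal is correct and follows essentially the same route as the paper's proof: establish the $U$-bimodule and $U$-bicomodule structures, compute the four induced $A$-bimodule structures to verify the coincidence/commutation conditions of Definitions \ref{genzano} and \ref{frascati}, and then check the mixed compatibilities, with the Yetter--Drinfel'd property \eqref{yd2} entering exactly where you predict, namely in the compatibility of $\gr$ with the left $U$-action. The explicit computation you sketch for $\gr(w(u\otimes_A p))$ is precisely the one the paper carries out, the remaining three cases being, as you note, formal or lighter.
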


\begin{proof}
First, it is obvious that left and right $U$-action commute, that is, $U_\ract \otimes_A P$ is a $U$-bimodule. By coassociativity of $U$, it is almost equally simple to see that the left and right coactions commute, that is, $U_\ract \otimes_A P$ is a $U$-bicomodule as well.

  Let us then discuss the corresponding four $A$-actions on $U_\ract \otimes_A P$ in the sense of Eqs.~\eqref{rain}:
the left $U$-action given in Eqs.~\eqref{mist} induces the $A$-bimodule structure
$$
a \lact (u \otimes_A p) \ract b := a \lact u \otimes_A p \ract b
= s(a) u \otimes_A t(b)p
$$
on $U_\ract \otimes_A P$, while 
the right one induces
$$
a \blact  (u \otimes_A p) \bract b := a \blact u \bract b \otimes_A p
= u t(a) s(b) \otimes_A p
.
$$
Likewise, the left $U$-coaction in \eqref{mist} induces the $A$-bimodule structure
  $$
a \cdot (u \otimes_A p) \cdot b := a \lact u \bract b \otimes_A p = s(a)us(b) \otimes_A p, 
$$
on $U_\ract \otimes_A P$,
while the right $U$-coaction induces
$$
a (u \otimes_A p) b := a \blact u  \otimes_A p b = ut(a) \otimes_A t(b)p. 
$$
From this it is clear that all conditions regarding the various $A$-actions in Definitions \ref{genzano} \& \ref{frascati}, respective parts (a) and (b), for the various kinds of Hopf modules are fulfilled.

 Finally,  it is a longish but straightforward check that $U_\ract \otimes_A P$ is a Hopf module in all four senses; to exemplify, we only check that it is an object in ${}_U \cM^U$, the other verifications being even simpler. One has
  \begin{equation*}
    \begin{split}
    \rho\big(w(u \otimes_A p)\big) &= \rho(w_{(1)} u \otimes_A w_{(2)} p)
    \\
    &=
    \big( w_{(1)} u_{(1)} \otimes_A (w_{(3)} p)_{[0]} \big) \otimes_A
    (w_{(3)} p)_{[1]}w_{(2)} u_{(2)}
\\
    &=
    ( w_{(1)} u_{(1)} \otimes_A w_{(2)} p_{[0]} ) \otimes_A
    w_{(3)} p_{[1]} u_{(2)}
    \\
    &=
    w \mancino \rho(u \otimes p),
\end{split}
    \end{equation*}
  which is condition \eqref{accedi1} in the compact formulation \eqref{sunshine1} of Proposition \ref{snow}, and 
  where in step three we used the defining left-right YD property \eqref{yd2}.
%
%  The last statement regarding the left coinvariants is obvious.
\end{proof}

Strengthening Theorem \ref{spinoza} by exploiting the adjoint action found in \cite[Lem.~2.7]{BekKowSar:UEAOLRACPCAC} on what was called a Hopf kernel there, we can then state:

\begin{theorem}
  \label{rhodia}
  Let $(U,A)$ be right Hopf bialgebroid (over a left bialgebroid) and such that $U_\ract$ is flat over $A$, and
  let $M\in {}_U^{\mkern 2mu U}\mkern -1mu\cM^U_U$ be a Hopf bimodule over $U$.
  \begin{enumerate}
    \compactlist{99}
    \item
  Then the module $ \coM$ of left coinvariants becomes a left
  $\Ae$-module by means of 
  \begin{equation}
    \label{wes1}
  \Ae \otimes \coM \to \coM, \quad (a,b) \otimes m \mapsto  a \blact m \ract b,
  \end{equation}
  which extends to a well-defined left $U$-action
  \begin{equation}
    \label{wes2}
\mpact \colon \due U \blact \bract \otimes_{\Ae} \coM \to \coM, \quad u \otimes_{\Ae} m \mapsto u_{\smap } m u_{\smam },
  \end{equation}
  termed the {\em adjoint} action.
\item
The two functors
\begin{equation}
  \label{funky}
  {}^{\co\,U } \! (-) \colon {}_U^{\mkern 2mu U}\mkern -1mu\cM^U_U
\quad \raisebox{-3pt}{$\longleftrightarrows$} \quad \ydu \ \colon \!  U_\ract \otimes_A - 
  \end{equation}
are mutually adjoint and  establish an equivalence of categories, with counit of the adjunction defined by the isomorphism 
  $$
\xi \colon  U_\ract  \otimes_A\coM \to  M, \quad u\otimes_A {m}\mapsto {m}u, 
$$
in the category ${}_U^{\mkern 2mu U}\mkern -1mu\cM^U_U$ of Hopf bimodules, and inverse given by
$$
%\xi^{-1} \colon
M \to U_\ract  \otimes_A \coM, \quad
m\mapsto m_{(-1)}\otimes_A m_{(0)\smap } m_{(0)\smam }.
$$
Its
unit, on the other hand, arises from the isomorphism
$$
\eta \colon P \to {}^{\co\,U} \! (U_\ract \otimes_A P),
\quad
p\mapsto 1_U \otimes_A p,
$$
in the category $\ydu$ of left-right YD modules over $U$,
and inverse
$
%\eta^{-1} \colon  {}^{\co\,U} (U\otimes_AV)
%\to {}_A\cM,
%\quad
u \otimes_A p \mapsto \varepsilon(u) \cdot p.
$
\end{enumerate}
\end{theorem}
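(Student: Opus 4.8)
The plan is to leverage the Fundamental Theorem \ref{spinoza}. A Hopf bimodule $M$ is in particular a right-left Hopf module $M\in{}^U\mkern -5mu\cM_U$, and as $(U,A)$ is a right Hopf algebroid with $U_\ract$ projective it is, in particular, a right Hopf-Galois comodule in the sense of Definition \ref{minestra}; Theorem \ref{spinoza}\,(iii) therefore already provides mutually inverse isomorphisms $\xi$ and $\eta$ assembling into an adjoint equivalence ${}^U\mkern -5mu\cM_U\simeq{}_A\cM$. What remains is to carry along the \emph{extra} data that distinguishes a Hopf bimodule from a mere right-left Hopf module --- the left $U$-action and the right $U$-coaction $\rho_M$ --- and to show that under $M\mapsto\coM$ this equips $\coM$ with a left-right Yetter-Drinfel'd structure in such a way that $\xi$ is promoted to an isomorphism of Hopf bimodules and $\eta$ to one of Yetter-Drinfel'd modules. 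Naturality of $\xi,\eta$ and the two triangle identities will then be inherited from Theorem \ref{spinoza} via the faithful forgetful functors to ${}^U\mkern -5mu\cM_U$ and to ${}_A\cM$.

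For part (i) I would first observe that \eqref{wes1} is a genuine left $\Ae$-module structure: by \eqref{rain} the actions $a\blact-$ and $-\ract a$ both stem from the right $U$-coaction side, they commute (being a left and a right $A$-action coming from the two commuting sides of the $U$-bimodule $M$), and they preserve $\coM$ because, by Definition \ref{frascati}(i)(c) and (ii)(c), $\gl_M(m\,t(a))=\gl_M(m)\gD(t(a))$ and $\gl_M(t(b)\,m)=\gD(t(b))\gl_M(m)$ while $\gD(t(a))=1_U\otimes_A t(a)$ and $\gD(t(b))=1_U\otimes_A t(b)$ are concentrated in a single leg. The remaining assertion --- that $u\mpact m:=u_\smap m u_\smam$ is a well-defined left $U$-action extending \eqref{wes1} --- is \cite[Lem.~2.7]{BekKowSar:UEAOLRACPCAC}, there formulated for the ``Hopf kernel'', transposed to the present setting; the moving parts are: well-definedness over the $\Ae$-balancing is the instance of \eqref{Mch5} for $U$ seen as a Hopf-Galois comodule over itself, landing in $\coM$ follows by rewriting $\gl_M(u_\smap m u_\smam)=\gD(u_\smap)\,(1_U\otimes_A m)\,\gD(u_\smam)$ through Definition \ref{frascati}(i)(c),(ii)(c) and collapsing with \eqref{Mch2} and \eqref{Mch4}, associativity and unitality are again \eqref{Mch4} and \eqref{Mch6}, and compatibility with \eqref{wes1} comes from specialising $u$ to $s(a)t(b)$ and using $\varepsilon$ on the source and target together with \eqref{Mch6}.

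For part (ii) the Yetter-Drinfel'd structure on $\coM$ is given by the adjoint action \eqref{wes2} together with the right $U$-coaction obtained by restricting $\rho_M$: for $m\in\coM$ the $U$-bicomodule identity of Proposition \ref{snow} gives $(\gl_M\otimes_A\id_U)(\rho_M(m))=1_U\otimes_A\rho_M(m)$, so that --- using flatness of $U$ over $A$, ensured by the projectivity hypothesis --- $\rho_M(m)$ already lies in $\coM\otimes_A U$; alternatively one postcomposes with the canonical projection $M\to\coM$, $m\mapsto\varepsilon(m_{(-1)})\cdot m_{(0)\smap}m_{(0)\smam}$, read off from $\eta^{-1}\circ\xi^{-1}$. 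The heart of the matter is then to verify that the adjoint action and this coaction satisfy the left-right Yetter-Drinfel'd compatibility \eqref{yd2}: starting from $\rho(u\mpact m)=\rho_M(u_\smap m u_\smam)$ one expands via \eqref{sunshine1}, \eqref{sunshine2} and shuttles $u_\smap$, $u_\smam$ past the coaction by means of \eqref{Mch4} and the $U$-instances of \eqref{Mch1}--\eqref{Mch3}, arriving at the required expression; together with the residual $\Ae$-linearities this makes $M\mapsto\coM$ a functor ${}_U^{\mkern 2mu U}\mkern -1mu\cM^U_U\to\ydu$, while Lemma \ref{aeun} supplies $U_\ract\otimes_A-$ in the opposite direction. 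It then remains only to upgrade the isomorphisms of Theorem \ref{spinoza}: $\xi\colon U_\ract\otimes_A\coM\to M$, $u\otimes_A m\mapsto mu$, is already an isomorphism in ${}^U\mkern -5mu\cM_U$, and that it is in addition left $U$-linear and right $U$-colinear for the structure maps \eqref{mist} reduces, once one plugs in $\xi(u\otimes_A m)=mu$, to the $U$-instance of \eqref{Mch3} respectively to the right-right Hopf module axiom \eqref{accedi2}; while $\eta\colon P\to{}^{\co\,U}(U_\ract\otimes_A P)$, $p\mapsto 1_U\otimes_A p$, is already an $A$-module isomorphism and intertwines the Yetter-Drinfel'd data of $P$ with the adjoint action and restricted coaction on the left-hand side immediately upon evaluating \eqref{mist} at $1_U\otimes_A p$ (which also exhibits the translation map of $U_\ract\otimes_A P$). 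Naturality and the triangle identities descend from Theorem \ref{spinoza}\,(iii) via the faithful forgetful functors, so $\big(M\mapsto\coM,\ U_\ract\otimes_A-,\ \eta,\ \xi\big)$ is an adjoint equivalence.

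The step I expect to be the real obstacle is the Yetter-Drinfel'd-compatibility verification in part (ii): it interlocks all four Hopf-bimodule axioms \eqref{accedi1}--\eqref{accedi4} with the base-ring-twisted translation-map identities \eqref{Mch1}--\eqref{Mch6}, all the while bookkeeping the eight pairwise-identified $A$-actions of \eqref{rain} and the Takeuchi and Takeuchi-Sweedler conditions that keep each intermediate tensor well-defined. A secondary but genuine subtlety is the claim that $\rho_M$ restricts to $\coM$: this is precisely where flatness of $U$ over $A$ is needed, and one must ensure it is flatness on the side matching the factor $\due U\lact{}$ occurring in $\gl_M$ --- which is exactly what the projectivity hypothesis delivers in combination with the right Hopf algebroid structure.
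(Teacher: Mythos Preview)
Your plan is correct and mirrors the paper's own proof: invoke Theorem~\ref{spinoza} for the underlying right-left Hopf module equivalence, endow $\coM$ with the adjoint action \eqref{wes2} and the restriction of $\rho_M$, verify the YD condition \eqref{yd2} by the direct computation you sketch (the paper uses \eqref{Tch3}--\eqref{Tch5} where you cite the comodule analogues \eqref{Mch2}--\eqref{Mch4}, which agree for $U$ over itself), appeal to Lemma~\ref{aeun} for the reverse functor, and finally check that $\xi$ is in addition left $U$-linear (via \eqref{Tch3}) and right $U$-colinear (via the right-right Hopf module axiom). One point you pass over but the paper isolates first: beyond the $\Ae$-balancing, the expression $u_\smap m u_\smam$ must be well-defined over the tensor product $U_{\bract}\otimes_A\due U\lact{}$ in which the translation map actually lives, and this requires $a\lact m = m\bract a$ for $m\in\coM$ --- the paper obtains this by applying $\gve\otimes_A\id$ to $\gl_M(m\bract a)=s(a)\otimes_A m$; also, associativity of the adjoint action is \eqref{Tch6}, for which there is no comodule analogue, so your reference to \eqref{Mch4} there is misplaced.
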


\begin{proof}
  Let us first show that the map \eqref{wes2} is well-defined indeed over the Sweedler presentation \eqref{Tch1}: for $m \in \coM$ and $a \in A$, one has, using the right $A$-action on $M$, using~Eqs.~\eqref{rain},
  \begin{equation}
    \label{gaudia}
\gl_M(m\cdot a) = \gl_M(m \bract a) = m_{(-1)} s(a) \otimes_A m_{(0)} = s(a) \otimes_A m 
  \end{equation}
from the $A$-linearity of $\gl_M$; the last step follows from $m$ being coinvariant, {\em cf.}\ Eq.\ \eqref{waterman}. Applying $\varepsilon \otimes_A M$ to both sides and identifying $A \otimes_A M \cong M$ by means of $a \otimes_A m \mapsto a \cdot m$ yields $m \cdot a = a \cdot m$ and since $M \in {}^{U\!}_U\cM^U_U$, Eqs.~\eqref{rain} then imply
$$
m \cdot a = m \bract a = a \lact m = a \cdot m
$$
for
$m \in \coM$,
which gives the well-definedness of the map \eqref{wes2}. Note that \eqref{gaudia} also implies, in general, that $m \cdot a = a \cdot m \notin \coM$ for $m \in \coM$ and all $a \in A$. Next, we have to show that the action map \eqref{wes2} lands in $\coM$: indeed, for $m \in \coM$ and $u \in U$, using the fact that $M$ is a Hopf bimodule over $U$, that is, in particular a left-right and left-left Hopf module, we have
\begin{equation*}
\begin{split}
 \gl_M(u \pmact m) =
  \gl_M(u_{\smap } m u_{\smam }) &= u_{\smap (1)} m_{(-1)} u_{\smam (1)} \otimes_A u_{\smap (2)} m_{(0)} u_{\smam (2)}
  \\
&  = u_{\smap (1)} u_{\smam (1)} \otimes_A u_{\smap (2)} m u_{\smam (2)}
  \\
&  = u_{\smap \smap (1)} u_{\smap \smam } \otimes_A u_{\smap \smap (2)} m u_{\smam }
  \\
&  = 1 \otimes_A u_{\smap } m u_{\smam } = 1 \otimes_A u \pmact m,
\end{split}
  \end{equation*}
using the identities \eqref{Tch5} resp.\ \eqref{Tch2} in the fourth resp.\ fifth step.
That the so-defined map \eqref{wes2} then defines a left $U$-action is immediate from \eqref{Tch6}, while the statement regarding the $A$-bimodule structure follows from \eqref{Tch9}.

Next, let us prove that the functors \eqref{funky} do what they are supposed to do, {\em i.e.}, first, 
that the functor
  ${}^{\co\,U} \! (-)$
of taking left coinvariants does produce a (left-right) YD module when starting from a Hopf bimodule over $U$, and, vice versa, that the functor $U_\ract \otimes -$ of tensoring with $U$ produces a Hopf bimodule when starting from a (left-right) YD module.

Let $M \in \tetra$. As seen in part (i), the space of coinvariants
$\coM$ is a left $\Ae$-module resp.\
a left
$U$-module with respect to the actions \eqref{wes1} resp.\ \eqref{wes2}, and we claim that, via the coaction $\rho_M$ inherited from $M$, it is also a right $U$-comodule that turns it into a left-right YD module. The first claim follows from the fact that $M$ is, in particular, a $U$-bicomodule, that is,
$(U \otimes_A \rho_M) \circ \gl_M = (\gl_M \otimes_A U) \circ \rho_M$, which means that $\rho_M$ restricts to a well-defined map $\coM \to \coM \otimes_A U$.
Let us then show that $\rho_M$ and the left action \eqref{wes2} endow $\coM$ with a left-right YD module structure. For $m \in \coM$, one has
  \begin{eqnarray*}
&&
    (u_{(2)} \pmact m)_{[0]}  \otimes_A (u_{(2)} \pmact m)_{[1]}u_{(1)}
\\
&
  {\overset{\scriptscriptstyle{
\eqref{wes2}
}}{=}}
    &
  (u_{(2)\smap} m u_{(2)\smam})_{[0]}  \otimes_A (u_{(2)\smap} m u_{(2)\smam})_{[1]}u_{(1)}
\\
&
  {\overset{\scriptscriptstyle{
\eqref{accedi1}, \eqref{accedi2}
}}{=}}
    &
  u_{(2)\smap(1)} m_{[0]} u_{(2)\smam (1)}  \otimes_A 
u_{(2)\smap(2)} m_{[1]} u_{(2)\smam (2)}
u_{(1)}
\\
&
  {\overset{\scriptscriptstyle{
\eqref{Tch5}
}}{=}}
    &
  u_{(2)\smap\smap(1)} m_{[0]} u_{(2)\smap\smam}  \otimes_A 
u_{(2)\smap\smap(2)} m_{[1]} u_{(2)\smam}
u_{(1)}
\\
&
  {\overset{\scriptscriptstyle{
\eqref{Tch3}
}}{=}}
    &
  u_{\smap(1)} m_{[0]} u_{\smam}  \otimes_A 
  u_{\smap(2)} m_{[1]}
\\
&
  {\overset{\scriptscriptstyle{
\eqref{Tch4}, \eqref{wes2}
}}{=}}
    &
  u_{(1)} \mpact m_{[0]} \otimes_A 
  u_{(2)} m_{[1]},
  \end{eqnarray*}
  which is the left-right YD property as given in \eqref{yd2}. That the functor $U_\ract \otimes_A -$ does produce a Hopf bimodule over $U$ out of a left-right YD module is the content of Lemma \ref{aeun}. Finally, the second part of part (ii) follows from Theorem \ref{spinoza}, once we add that $\xi$ is, apart from being a morphism of right $U$-modules and left $U$-comodules as in Theorem \ref{spinoza}, also a morphism of left $U$-modules as well as of right $U$-comodules (and likewise for $\eta$). The left $U$-linearity of $\xi$ follows from
$$
\xi\big(u(u' \otimes_A m)\big) = 
\xi\big(u_{(1)}u' \otimes_A u_{(2)\smap } m u_{(2)\smam }\big)
=  u_{(2)\smap } m u_{(2)\smam } u_{(1)}u'
= umu' = u\xi(u' \otimes_A m),
$$
where we used \eqref{Tch3} in the third step, while right $U$-colinearity of $\xi$ simply follows from the fact that, by Lemma \ref{aeun}, the tensor product $U_\ract \otimes_A P$ is also a right-right Hopf module, {\em cf.}\ Eq.~\eqref{sunshine1}. The corresponding statement about $\eta$ is obvious.
\end{proof}

\subsection{Monoidality and braidings}
As a next goal, we want to promote Theorem \ref{rhodia} to an equivalence of {\em monoidal} categories, which, in turn, can be enhanced even to an equivalence of {\em braided} monoidal categories.
To start with, 
on the one hand, the monoidal structure on $\ydu$ is well-known and given simply by the tensor product $\otimes_A$, diagonal action and codiagonal coaction, with unit object given by the base algebra $A$: in short, we shall write this monoidal category as 
$\big(\ydu, \otimes_A, A\big)$.
On the other hand, let us discuss two possible monoidal structures on $\tetra$, which appear in a natural way: 

\begin{lem}
\label{exposition}
  Let $U$ be a left bialgebroid and let $\tetra$ denote the corresponding category of Hopf bimodules over $U$. Then
  \begin{enumerate}
    \compactlist{50}
  \item
the triple $\big(\tetra, \otimes_{\mkern 1mu U} , U\big)$ forms a monoidal category;
  \item
    the triple $\big(\tetra, \bx_U, U\big)$ forms a (lax) monoidal category if both $U_\ract$ and $\due U \lact {}$ are flat over~$A$.
  \end{enumerate}
    \end{lem}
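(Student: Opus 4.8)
The plan is to establish each of the two monoidal structures on $\tetra$ separately, treating the tensor product case and the cotensor product case as dual constructions. For part (i), I would first recall that by Proposition \ref{snow} a Hopf bimodule is in particular a $U$-bicomodule in the category ${}_U\cM_U$, so the two sides $M \otimes_U N$ inherits a $U$-bimodule structure in the obvious way (acting through the outer factors) and a $U$-bicomodule structure diagonally via the codiagonal coactions, using the formulas \eqref{fog} for the Takeuchi--Sweedler products that receive $\rho$ and $\gl$. The main content is to verify that all four Hopf module compatibility conditions \eqref{accedi1}--\eqref{accedi4}, or rather their compact forms \eqref{sunshine1}--\eqref{sunshine2}, hold on $M \otimes_U N$; this is where one must check that the various $A$-actions of \eqref{rain} are compatible across the tensor product over $U$ so that the expressions are well-defined, and that the diagonal coaction is multiplicative with respect to the outer actions. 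Then I would check that $U$ itself, viewed as a Hopf bimodule over itself via multiplication and comultiplication (as in the Example after Definition \ref{frascati}), serves as a two-sided unit: the isomorphisms $U \otimes_U M \cong M \cong M \otimes_U U$ are the usual ones, and one verifies they are morphisms of Hopf bimodules. Associativity of $\otimes_U$ is inherited from the associativity of the tensor product of bimodules, so the associator, left and right unitors, and the pentagon/triangle axioms are essentially automatic once the underlying structure is in place.

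For part (ii), the strategy is formally dual, using the cotensor product $M \bx_U N$ (denoted $\bx_U$ in the statement). Here the flatness hypotheses on $U_\ract$ and $\due U \lact {}$ over $A$ are exactly what is needed to ensure that the cotensor product, defined as an equalizer, behaves well: flatness guarantees that $\bx_U$ commutes with the relevant tensor products so that the cotensor product of two $U$-bicomodules is again a $U$-bicomodule, and more importantly that $\bx_U$ is associative (the cotensor product of comodules is not associative in general without flatness, which is the standard subtlety with corings). So I would first set up $M \bx_U N$ as an equalizer inside $M \otimes_A N$, invoke flatness to get a well-defined $U$-bicomodule structure (the outer coactions) and a $U$-bimodule structure (diagonal actions, dual to \eqref{fog}), then check the four Hopf module axioms, now reading them the other way around — the colinearity-type conditions become the easy ones and the multiplicativity of the diagonal action with respect to the outer coactions is the content. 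The unit object is again $U$, now with the cotensor unit isomorphisms $U \bx_U M \cong M \cong M \bx_U U$ coming from counitality, and one checks these are Hopf bimodule maps. Associativity and the coherence axioms again follow from the corresponding facts for cotensor products of bicomodules over a coring, given flatness.

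The main obstacle I anticipate is not any single deep step but rather the bookkeeping of the many $A$-actions: a Hopf bimodule carries the eight $A$-actions of \eqref{rain} collapsing to four, and when forming $M \otimes_U N$ or $M \bx_U N$ one must be scrupulous about which action is being balanced over $U$ (resp.\ cobalanced), and verify that the ``commutes with'' conditions (condition (b) in each part of Definitions \ref{genzano} and \ref{frascati}) survive — these are the conditions that, per the Remarks after those definitions, are genuinely needed to make the right-action-side equations well-defined. In the cotensor case, the additional care is that one must confirm the diagonal $U$-action on $M \bx_U N$ actually lands inside the equalizer subspace; this uses the bicomodule compatibility and the Hopf bimodule axioms of $M$ and $N$ together, and is the place where flatness is silently used again to identify the equalizer after applying the action. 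I would present these verifications in a streamlined way, doing one representative check in each part (say, condition \eqref{accedi1} for $\otimes_U$ and its dual for $\bx_U$) and asserting the remaining three are analogous, since they are symmetric under the evident left/right and action/coaction dualities built into Definitions \ref{genzano} and \ref{frascati}.
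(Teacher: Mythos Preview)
Your plan follows essentially the same route as the paper: outer actions and codiagonal coactions for $\otimes_U$, diagonal actions and outer coactions for $\bx_U$, with $U$ as unit in both cases, and most compatibility checks left as routine once the structure maps are written down.

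There is one point in part~(ii) where your identification of the obstacle is slightly off. You flag as the main issue that the diagonal $U$-action on $M \bx_U N$ must \emph{land inside} the equaliser. That check is indeed needed, but the paper isolates a logically prior and more delicate one: that the diagonal \emph{right} $U$-action $(m \otimes_A n) \cdot u := mu_{(1)} \otimes_A nu_{(2)}$ is well-defined at all on elements of $M \bx_U N$. Since $\cM_U$ is not monoidal for a left bialgebroid, this formula is only independent of the Sweedler presentation of $\Delta(u)$ on the Takeuchi--Sweedler subspace $M \times_A N$ (where $am = m \otimes n \cdot a$ for the relevant $A$-actions). The paper therefore first proves the inclusion $M \bx_U N \subseteq M \times_A N$, using the Hopf bimodule identities \eqref{rain}, \eqref{ha3}, \eqref{accedi2}, \eqref{hak} and applying $\varepsilon$ to the middle tensor factor. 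This is the computation the paper calls ``possibly unexpected'', and it is the actual content of part~(ii) beyond the standard coring facts about flatness and associativity of $\bx_U$. Your proposal would almost certainly stumble onto this upon execution, but as written it conflates this well-definedness issue with the (separate, and easier) closure-under-the-action check.
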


\begin{proof}
Observe that in both cases the unit object is meant to be the (total space of the) left bialgebroid $U$ itself.
\begin{enumerate}
  \compactlist{99}
  \item
    Let $M, N \in \tetra$. The monoidal structure arises from
the following (left and right) $U$-actions and $U$-coactions 
on $M \otimes_{\mkern 1mu U}  N$:
      \begin{equation}
\label{kran1}
\begin{array}{rcl}
  u(m \otimes_{\mkern 1mu U}  n) &:=& um \otimes_{\mkern 1mu U}  n,
  \\
  (m \otimes_{\mkern 1mu U}  n)u &:=& m \otimes_{\mkern 1mu U}  nu,
  \\
  \gl(m \otimes_{\mkern 1mu U}  n) &:=& m_{(-1)}n_{(-1)} \otimes_A (m_{(0)} \otimes_{\mkern 1mu U}  n_{(0)}),
  \\
  \gr(m \otimes_{\mkern 1mu U}  n) &:=& (m_{[0]} \otimes_{\mkern 1mu U}  n_{[0]}) \otimes_A m_{[1]} n_{[1]},
  \end{array}
  \end{equation}
      for $u \in U$, $m \in M$, and $n \in N$.
These induce the following respective $A$-bimodule structures: 
       \begin{equation*}
%\label{kran2}
\begin{array}{rclcl}
  a \lact (m \otimes_{\mkern 1mu U}  n) \ract b &:=& (a \lact m \ract b) \otimes_{\mkern 1mu U}  n &=& s(a)t(b) m \otimes_{\mkern 1mu U}  n,
  \\
  a \blact (m \otimes_{\mkern 1mu U}  n) \bract b &:=& m \otimes_{\mkern 1mu U}
  (a \blact n \bract b) &=& m \otimes_{\mkern 1mu U}  n t(a) s(b),
  \\
  a \cdot (m \otimes_{\mkern 1mu U}  n) \cdot b &:=&
  (a \cdot m) \otimes_{\mkern 1mu U}  (n \cdot b) &=& s(a) m \otimes_{\mkern 1mu U}  n s(b), 
  \\
  a(m \otimes_{\mkern 1mu U}  n)b &:=& (mb) \otimes_{\mkern 1mu U}  (an) &=& t(b)m \otimes_{\mkern 1mu U}  nt(a), 
  \end{array}
  \end{equation*}
    from which it is obvious that the four requirements in Eqs.~\eqref{rain} are fulfilled. 
Checking that the left resp.\ right coaction in \eqref{kran1} are both left and right $U$-linear with respect to the given actions so that by \eqref{sunshine1}--\eqref{sunshine2} one obtains a Hopf bimodule structure on $M \otimes_{\mkern 1mu U}  N$ is a simple exercise (hence omitted) that uses the Hopf bimodule properties of $M$ and $N$ along with the diagonal action from left and right on the codomain of the respective coproducts as in \eqref{fog}. Clearly, $U$ is the unit object in this category by means of its left and right multiplication along with its coproduct.
\item
  Recall first that the cotensor product over $U$ is defined by
  \begin{equation}
    \label{burgol}
M \bx_U  N := \big\{m \otimes_A n \in M_\ract \otimes_A \due N \lact {} \mid m_{[0]} \otimes_A m_{[1]} \otimes_A n = m \otimes_A n_{(-1)} \otimes_A n_{(0)}  \big\},
  \end{equation}
  that is, the equaliser of the pair of maps $\rho_M \otimes_A N$  and $M \otimes_A \gl_N$.

  Let then $M, N \in \tetra$. The monoidal structure arises from
the following (left and right) $U$-actions and $U$-coactions 
on $M \bx_U  N$:
      \begin{equation}
\label{kran2}
\begin{array}{rcl}
  u(m \bx_U  n) &:=& u_{(1)}m \bx_U  u_{(2)}n,
  \\
  (m \bx_U  n)u &:=& mu_{(1)} \bx_U  nu_{(2)},
  \\
  \gl(m \bx_U  n) &:=& m_{(-1)} \otimes_A (m_{(0)} \bx_U  n),
  \\
  \gr(m \bx_U  n) &:=& (m \bx_U  n_{[0]}) \otimes_A n_{[1]},
  \end{array}
  \end{equation}
      for $u \in U$, $m \in M$, and $n \in N$.
      While it is well known that the cotensor product of bi\-co\-mod\-ules becomes a left resp.\ right $U$-comodule again under the stated left resp.\ right $A$-flatness conditions that guarantees associativity between the relevant tensor and cotensor product,
      %(and also associativity of the cotensor product itself; see, for example, \cite[\S22.3]{BrzWis:CAC}),
      let us examine the possibly unexpected fact  that the given right $U$-action is well-defined (see the comment in \S\ref{casadante} why this might come as a surprise): for this, we have to prove that $M \bx_U N \subseteq M \times_A N$, where the Takeuchi-Sweedler product is given as
\begin{equation*}
M \times_A N = \big\{ \textstyle\sum_i m^i \otimes n^i \in M \otimes_A N  \mid  \sum_i am^i  \otimes n^i = \sum_i m^i \otimes n^i \cdot a, \ \forall a \in A  \big\},
\end{equation*}
where, however, due to the Hopf bimodule structure on $M$ and $N$, the involved $A$-actions are given, in turn, by $am = a \blact m = mt(a)$ for $m \in m$, and $n \cdot a = n \bract a = ns(a)$, see Eqs.~\eqref{rain}.
Indeed, if $ m \bx_U n \in M \bx_U N$, then by \eqref{ha3}, \eqref{accedi2}, \eqref{hak}, and \eqref{rain}, this means
\begin{equation*}
  \begin{split}
(am)_{[0]} \otimes_A (am)_{[1]} \otimes_A n
    &=
    m_{[0]} \otimes_A a \blact m_{[1]} \otimes_A n
    \\
    &=
    m \otimes_A a \blact n_{(-1)} \otimes_A  n_{(0)} 
    \\
    &=
    m \otimes_A n_{(-1)} \otimes_A  n_{(0)} \cdot a.
  \end{split}
    \end{equation*}
Since the natural and the induced $A$-action on a comodule commute, applying $\gve$ to the middle factor of both the left resp.\ right hand side of the above yields the identity
$$
am \bx_U n = m \bx_U n \cdot a
$$
on the subspace $M \bx_U N$,
as desired. Similarly as before, the (co)actions in \eqref{kran2} then induce the following respective $A$-bimodule structures: 
        \begin{equation*}
\begin{array}{rclcl}
  a \lact (m \bx_U  n) \ract b &:=& (a \lact m)  \bx_U  (n \ract b) &=& s(a) m \bx_U  t(b)n,
  \\
 a \blact (m \bx_U  n) \bract b &:=& (m \bract b) \bx_U  (a \blact n)  &=& m s(b) \bx_U  n t(a),
  \\
  a \cdot (m \bx_U  n) \cdot b &:=&
  m \bx_U  (a \cdot n \cdot b) &=& m \bx_U s(a) n s(b), 
  \\
  a(m \bx_U  n)b &:=& (amb) \bx_U  n &=& t(b)mt(a) \bx_U  n, 
  \end{array}
  \end{equation*}
    from which, again, it is obvious that the four requirements in Eqs.~\eqref{rain} are fulfilled. 
Also, verifying that the left resp.\ right $U$-coaction in \eqref{kran2} are both left and right $U$-linear with respect to the given $U$-actions is a direct check so that by \eqref{sunshine1}--\eqref{sunshine2} one obtains a Hopf bimodule structure on $M \bx_U  N$ is straightforward, while again $U$ is the unit object in this category by means of its left and right multiplication along with its coproduct.
\end{enumerate}
This completes the proof.
\end{proof}

\begin{prop}
\label{resistente}
  If $(U,A)$ is a left Hopf algebroid (over a left bialgebroid) and such that both $U_\ract$ and $\due U \lact {}$ are flat over $A$, then
by means of the natural isomorphisms
  \begin{equation}
    \label{vipitenojoghurt}
\xi_{M,N} \colon M \otimes_{\mkern 1mu U} N \to M \bx_U N, \qquad m \otimes_U n \mapsto m_{[0]} n_{(-1)} \bx_U m_{[1]} n_{(0)}. 
  \end{equation}
  of which the inverses are given by
  $
\xi^{-1}_{M,N} \colon m \bx_U n \mapsto m_+m_- \otimes_U n,  
  $
the identity functor establishes an equivalence
\begin{equation}
  \label{cuffie}
\big(\tetra, \otimes_{\mkern 1mu U} , U\big) \to \big(\tetra, \bx_U , U\big)
 \end{equation}
of monoidal categories.
\end{prop}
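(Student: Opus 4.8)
The plan is to show that the maps $\xi_{M,N}$ of \eqref{vipitenojoghurt} are well-defined isomorphisms in $\tetra$, natural in $M$ and $N$, and compatible with the associativity and unit constraints of the two monoidal structures of Lemma~\ref{exposition}; since both of these make $\tetra$ monoidal under the stated hypotheses ($\due U\lact{}$ flat being needed for $\bx_U$), the identity functor equipped with $\xi$ is then a strong monoidal equivalence, which is \eqref{cuffie}. First I would settle well-definedness. That $m\otimes_k n\mapsto m_{[0]}n_{(-1)}\otimes_A m_{[1]}n_{(0)}$ factors through $M\otimes_U N$ is seen by comparing $\xi((mu)\otimes n)$ with $\xi(m\otimes(un))$: since $M$ is a right-right and $N$ a left-right Hopf module, i.e.\ $\rho_M(mu)=\rho_M(m)\pancino u$ and $\gl_N(un)=u\mancino\gl_N(n)$, both equal $m_{[0]}u_{(1)}n_{(-1)}\otimes_A m_{[1]}u_{(2)}n_{(0)}$. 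That the image lies in the cotensor product \eqref{burgol} follows by applying $\rho_M\otimes_A N$ and $M\otimes_A\gl_N$ and invoking coassociativity of $\rho_M$ and of $\gl_N$ together with the $U$-bicomodule identity on $M$. For the candidate inverse $m\bx_U n\mapsto m_+m_-\otimes_U n$, note that since $(U,A)$ is a \emph{left} Hopf algebroid with $U_\ract$ projective, every right $U$-comodule --- in particular $M$ and $U$ itself --- is a left Hopf-Galois comodule, as recalled around \eqref{fabercastell2}--\eqref{fabercastell3}; hence the translation map $m\mapsto m_-\otimes_A m_+\in U\times_A M$ exists, and its well-definedness over the $A$-balancing in \eqref{burgol} follows from the Takeuchi property \eqref{Nch1} and the (twisted) $A$-bilinearity \eqref{Nch5}.

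Next I would check that $\xi_{M,N}$ and $m\bx_U n\mapsto m_+m_-\otimes_U n$ are mutually inverse. In $\xi^{-1}(\xi(m\otimes_U n))$ one expands $(m_{[0]}n_{(-1)})_-\otimes(m_{[0]}n_{(-1)})_+$ by the right-Hopf-module translation identity \eqref{scalea2} (applied with $M$ playing the role of $N$), then uses \eqref{Nch2}, \eqref{Nch6} and counitality of $\gl_N$ to collapse everything back to $m\otimes_U n$; conversely, $\xi(\xi^{-1}(m\bx_U n))$ is brought back to $m\bx_U n$ using \eqref{Nch3}, the defining relation of $M\bx_U N$, and counitality of $\rho_M$. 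These are lengthy but mechanical manipulations with \eqref{Nch1}--\eqref{Nch6}.

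Then I would verify that $\xi_{M,N}$ is a morphism in $\tetra$ by comparing \eqref{kran1} with \eqref{kran2}: left $U$-linearity follows from $\rho_M(um)=u\mancino\rho_M(m)$ together with the fact that the two $U$-actions commute on $M$ and compose on $N$; right $U$-linearity from $\gl_N(nu)=\gl_N(n)\pancino u$ symmetrically; left $U$-colinearity from the $U$-bicomodule identity on $M$ and coassociativity of $\gl_N$ via the compact conditions \eqref{sunshine2}; and right $U$-colinearity from the $U$-bicomodule identity on $N$ and coassociativity of $\rho_M$. Naturality of $\xi$ in both slots is then immediate from \eqref{vipitenojoghurt}, a morphism of Hopf bimodules being in particular right $U$-colinear in the first and left $U$-colinear in the second argument.

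Finally I would address monoidality. The left and right unitors of $\otimes_U$ are $u\otimes_U n\mapsto un$ resp.\ $m\otimes_U u\mapsto mu$, those of $\bx_U$ are the ones induced by the counit $\gve$, and $\xi_{U,N}$, $\xi_{M,U}$ intertwine them because the right resp.\ left $U$-coaction on the unit object $U$ is the coproduct $\gD$, so the claim reduces to counitality. The remaining point is the associativity hexagon, relating $\xi_{M,N\otimes_U P}\circ(\id_M\otimes_U\xi_{N,P})$ and $\xi_{M\otimes_U N,P}\circ(\xi_{M,N}\otimes_U\id_P)$ through the associativity constraints of $\otimes_U$ and of $\bx_U$; this is the technical heart, and it is here that flatness of $U$ on both sides is genuinely used, since one needs associativity of the cotensor product itself (see \cite[\S22.3]{BrzWis:CAC}) alongside coassociativity of all four coactions of $M$, $N$, $P$ and the mutual compatibility of $\gl$ and $\rho$ on each. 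Once the hexagon and the two unit triangles are in place, $(\id_\tetra,\xi)$ is a strong monoidal equivalence $\big(\tetra,\otimes_{\mkern 1mu U},U\big)\to\big(\tetra,\bx_U,U\big)$. I expect this associativity coherence to be the main obstacle: it is the only step that simultaneously involves three tetramodules and consumes both flatness hypotheses, whereas the mutual-inverse computation, though long, is routine.
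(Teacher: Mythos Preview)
Your proposal is correct and follows essentially the same route as the paper: well-definedness and $U$-balancedness of $\xi_{M,N}$, the mutual-inverse computation via \eqref{scalea2} and the identities \eqref{Nch1}--\eqref{Nch6}, and then the hexagon and unit coherences for the identity functor. Two minor remarks: the condition $\gl_N(un)=u\mancino\gl_N(n)$ you invoke is the \emph{left-left} Hopf module axiom \eqref{accedi4}, not ``left-right''; and while you flag the hexagon as the technical heart, the paper treats it as a direct check and instead spends its care on the well-definedness of \eqref{vipitenojoghurt} over the two Sweedler presentations of $\rho_M$ and $\gl_N$ (via Definition~\ref{genzano}\,(ii)(a),(b) and \eqref{ha3}), a point you pass over.
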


 \begin{proof}
Let us first explain why the map \eqref{vipitenojoghurt} is well-defined over the two different Sweedler presentations (in spite of the fact that it does not look so at a first glance): well-definedness over the right coaction on $M$
in the tensor product $M \otimes_A N$
follows from the defining property of commuting right actions over $U$ resp.\ over $A$ in Definition \ref{genzano}, part (ii)\,(b), of a right-right Hopf module, while the well-definedness over the left coaction on $N$ still follows by Definition \ref{genzano}, part (ii)\,(a), along with the Takeuchi property \eqref{ha3} for a right $U$-comodule. Moreover, that the maps $\xi = \xi_{M,N}$ are $U$-balanced simply follows by \eqref{accedi1} and \eqref{accedi4},  as does the fact that it corestricts to $M \bx_U N$.

Let us then show that \eqref{vipitenojoghurt} is an isomorphism for all pairs of $M, N \in \tetra$: dropping the subscripts for the sake of readability, one has
  \begin{eqnarray*}
    (\xi^{-1} \circ \xi)(m \otimes_{\mkern 1mu U}  n)
&
   {\overset{\scriptscriptstyle{
\eqref{vipitenojoghurt}
}}{=}}
& 
   \xi^{-1}(m_{[0]} n_{(-1)} \bx_U m_{[1]} n_{(0)})
   \\
   &
   {\overset{\scriptscriptstyle{
%\eqref{vipitenojoghurt}
}}{=}}
& 
 (m_{[0]} n_{(-1)})_+(m_{[0]} n_{(-1)})_- \otimes_{\mkern 1mu U} m_{[1]} n_{(0)}
   \\
   &
   {\overset{\scriptscriptstyle{
\eqref{scalea2}
}}{=}}
& 
 m_{[0]+} n_{(-1)+} n_{(-1)-} m_{[0]-} \otimes_{\mkern 1mu U} m_{[1]} n_{(0)}
 \\
 &
 {\overset{\scriptscriptstyle{
\eqref{Sch7}, \eqref{Nch5}
}}{=}}
& 
 \big(\gve(n_{(-1)}) m_{[0]}\big)_+ \big(\gve(n_{(-1)}) m_{[0]}\big)_-
 \otimes_{\mkern 1mu U} m_{[1]} n_{(0)}
 \\
 &
 {\overset{\scriptscriptstyle{
\eqref{ha3}, \eqref{rain}
}}{=}}
& 
 m_{[0]+} m_{[0]-} \otimes_{\mkern 1mu U} m_{[1]} (\gve(n_{(-1)}) \lact n_{(0)})
\\
 &
 {\overset{\scriptscriptstyle{
%\eqref{ha3}, \eqref{rain}
}}{=}}
& 
 m_{[0]+} m_{[0]-}  m_{[1]} \otimes_{\mkern 1mu U} n
\\
 &
 {\overset{\scriptscriptstyle{
\eqref{Nch3}
}}{=}}
& 
 m \otimes_{\mkern 1mu U} n,
  \end{eqnarray*}
for $m \in M$, $n \in N$,  and hence $\xi^{-1} \circ \xi = \id$.
On the other hand, 
  \begin{eqnarray*}
    (\xi \circ \xi^{-1})(m \bx_U  n)
&
   {\overset{\scriptscriptstyle{
%\eqref{vipitenojoghurt}
}}{=}}
& 
 \xi(m_+ m_-   \otimes_{\mkern 1mu U} n )
   \\
   &
   {\overset{\scriptscriptstyle{
\eqref{vipitenojoghurt}
}}{=}}
& 
 (m_+m_-)_{[0]} n_{(-1)} \bx_U (m_+m_-)_{[1]} n_{(0)}
   \\
   &
   {\overset{\scriptscriptstyle{
\eqref{accedi2}
}}{=}}
& 
 m_{+[0]} m_{-(1)} n_{(-1)} \bx_U m_{+[1]} m_{-(2)} n_{(0)}
 \\
 &
 {\overset{\scriptscriptstyle{
\eqref{Nch4}
}}{=}}
& 
m_{++[0]} m_{-} n_{(-1)} \bx_U m_{++[1]} m_{+-} n_{(0)}
 \\
 &
 {\overset{\scriptscriptstyle{
\eqref{Nch2}
}}{=}}
& 
m_{+} m_{-} n_{(-1)} \bx_U n_{(0)}
\\
 &
 {\overset{\scriptscriptstyle{
\eqref{burgol}
}}{=}}
& 
m_{[0]+} m_{[0]-} m_{[1]} \bx_U n
\\
 &
 {\overset{\scriptscriptstyle{
\eqref{Nch3}
}}{=}}
 &
 m \bx_U n,
  \end{eqnarray*}
  and therefore $\xi \circ \xi^{-1} = \id$ as well.

  While the appurtenant map $\xi_0 \colon U \to U$ on the respective unit objects is clearly the identity on $U$, it is now an easy check that the identity functor along with $\xi$ resp.\ $\xi_0$ fulfil the hexagon resp.\ square axioms for monoidal functors (see, for example, \cite[Def.~XI.4.1]{Kas:QG} for details and terminology) and therefore establishes a monoidal equivalence.
  \end{proof}
  
With this, we can state:

\begin{theorem}
  \label{spielzeit}
  Let $(U,A)$ be both a left and right Hopf algebroid (over a left bialgebroid).
  \begin{enumerate}
    \compactlist{99}
  \item
      If $U_\ract$ is right $A$-flat, then the functors from \eqref{funky} induce a monoidal equivalence between
      $\big(\tetra, \otimes_{\mkern 1mu U} , U\big)$
      and
      $\big(\ydu, \otimes_A, A\big)$.
    \item
  If both $U_\ract$ and $\due U \lact {}$ are flat over $A$,
      then there is a monoidal equivalence between
      $\big(\tetra, \bx_U, U\big)$
      and
      $\big(\ydu, \otimes_A, A\big)$.
  \end{enumerate}
\end{theorem}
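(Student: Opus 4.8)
The plan is to promote the equivalence of categories established in Theorem~\ref{rhodia} to a \emph{monoidal} equivalence, and then to read off part~(ii) by composition. For part~(i), I would equip the functor $U_\ract \otimes_A - \colon \ydu \to \tetra$ (equivalently, its quasi-inverse ${}^{\co U}(-)$) with the structure of a strong monoidal functor with respect to $\big(\ydu, \otimes_A, A\big)$ and $\big(\tetra, \otimes_{\mkern 1mu U}, U\big)$. The unit constraint is the evident isomorphism $U \to U_\ract \otimes_A A$, $u \mapsto u \otimes_A 1_A$, and checking that it is an isomorphism of Hopf bimodules amounts to comparing the structure maps of Lemma~\ref{aeun} for the trivial left-right YD module $A$ (left action via the counit, coactions via source and target) with the native Hopf bimodule $U$. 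For the tensor constraint, the point is that the right $U$-action on $U_\ract \otimes_A P$ in Lemma~\ref{aeun} only touches the $U$-leg, so that in $(U_\ract \otimes_A P) \otimes_{\mkern 1mu U} (U_\ract \otimes_A Q)$ every element normalises to the form $(1_U \otimes_A p) \otimes_{\mkern 1mu U} (v \otimes_A q)$; this gives rise to a natural isomorphism
$$
\zeta_{P,Q} \colon (U_\ract \otimes_A P) \otimes_{\mkern 1mu U} (U_\ract \otimes_A Q) \ \longrightarrow \ U_\ract \otimes_A (P \otimes_A Q),
$$
whose inverse splits the intermediate copy of $U$ off again against the left $U$-comodule structure on $U_\ract \otimes_A P$.

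A cleaner way to organise the same data is to verify directly that the functor ${}^{\co U}(-)$ is strong monoidal via the assignment $m \otimes_A n \mapsto m \otimes_{\mkern 1mu U} n$ on coinvariants. That this lands in ${}^{\co U}(M \otimes_{\mkern 1mu U} N)$ is the one-line computation $\gl_{M \otimes_U N}(m \otimes_{\mkern 1mu U} n) = m_{(-1)} n_{(-1)} \otimes_A (m_{(0)} \otimes_{\mkern 1mu U} n_{(0)}) = 1 \otimes_A (m \otimes_{\mkern 1mu U} n)$ for coinvariant $m$ and $n$, using \eqref{kran1}; and that it is a bijection follows from the Fundamental Theorem~\ref{spinoza} upon reducing to the case $M = U_\ract \otimes_A P$, $N = U_\ract \otimes_A Q$, where one identifies ${}^{\co U}\big((U_\ract \otimes_A P) \otimes_{\mkern 1mu U} (U_\ract \otimes_A Q)\big) \cong P \otimes_A Q$ as left-right YD modules.

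There are then three substantive checks. First, that $\zeta_{P,Q}$ (respectively, the map on coinvariants) is well-defined over the tensor products in sight and compatible with the four $A$-module structures \eqref{rain} --- routine bookkeeping with Definitions~\ref{genzano} and \ref{frascati}. Second, that it is a morphism in the category at hand (of Hopf bimodules, respectively of left-right YD modules): compatibility with the right $U$-coaction is immediate from \eqref{kran1} and the codiagonal coaction on $P \otimes_A Q$, that with the left $U$-coaction (relevant when working in $\tetra$) is the computation indicated above, whereas compatibility with the \emph{left} (adjoint) $U$-action \eqref{wes2}, which on coinvariants reads $u_\smap m u_\smam$, is the delicate step: one has to rewrite $(u_\smap m) \otimes_{\mkern 1mu U} (n\, u_\smam)$ as $(u_{(1)\smap} m\, u_{(1)\smam}) \otimes_{\mkern 1mu U} (u_{(2)\smap} n\, u_{(2)\smam})$ modulo the $U$-balancing, which calls for the translation-map identities (such as \eqref{Tch3} and \eqref{Tch5}) for the right Hopf structure. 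Third, that $\zeta$ together with the unit constraint satisfies the hexagon and triangle coherence axioms of a monoidal functor (cf.\ \cite[Def.~XI.4.1]{Kas:QG}), which with the explicit formulas is a lengthy but mechanical diagram chase. Combined with the fact that the underlying functor is an equivalence by Theorem~\ref{rhodia}, this produces the monoidal equivalence $\big(\ydu, \otimes_A, A\big) \simeq \big(\tetra, \otimes_{\mkern 1mu U}, U\big)$ of part~(i).

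Part~(ii) then costs nothing further: under its hypotheses $(U,A)$ is in particular a right Hopf algebroid with $U_\ract$ projective, so part~(i) applies, and it is a left Hopf algebroid with both $U_\ract$ and $\due U \lact {}$ projective, so Proposition~\ref{resistente} applies; composing the monoidal equivalence of part~(i) with the monoidal equivalence $\big(\tetra, \otimes_{\mkern 1mu U}, U\big) \simeq \big(\tetra, \bx_U, U\big)$ of Proposition~\ref{resistente} yields $\big(\ydu, \otimes_A, A\big) \simeq \big(\tetra, \bx_U, U\big)$ as monoidal categories. The main obstacle throughout is the compatibility of the tensor constraint with the adjoint $U$-action, since that is exactly the point at which the two Hopf-Galois maps in play --- the one on $U$ itself and the one induced on Hopf bimodules --- have to be reconciled; everything else is carried by Theorem~\ref{rhodia}, Lemma~\ref{exposition}, and Proposition~\ref{resistente}.
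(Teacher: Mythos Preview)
Your overall architecture matches the paper's exactly: use Theorem~\ref{rhodia} to reduce to Hopf bimodules of the form $U_\ract\otimes_A P$, exhibit a tensor constraint $\zeta$ for the functor $U_\ract\otimes_A-$, verify it is a morphism in $\tetra$ together with the coherence conditions, and for part~(ii) compose with the monoidal equivalence of Proposition~\ref{resistente}. Part~(ii) is handled identically in both.

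There is one genuine difference of perspective worth flagging. The paper stays entirely on the Hopf bimodule side: it writes down the explicit constraint
\[
\zeta\big((u\otimes_A p)\otimes_U(v\otimes_A q)\big)=u_{(1)}v\otimes_A\big(p_{[0]}\otimes_A p_{[1]}u_{(2)}q\big),
\]
with inverse $u\otimes_A(p\otimes_A q)\mapsto(1\otimes_A p_+)\otimes_U(u\otimes_A p_-q)$ using that $P$ is a \emph{left} Hopf-Galois comodule, and then checks that $\zeta$ is a morphism of Hopf bimodules. The two nontrivial verifications are left $U$-linearity and right $U$-colinearity of $\zeta$, and each of them consumes exactly one instance of the YD condition~\eqref{yd2}. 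Your ``normalisation'' heuristic is correct as far as it goes, but note that the resulting map is \emph{not} the naive $(1\otimes_A p)\otimes_U(v\otimes_A q)\mapsto v\otimes_A(p\otimes_A q)$; the right coaction on $P$ genuinely enters the formula, and this twist is what makes right $U$-colinearity work. Likewise, the paper never touches the adjoint action~\eqref{wes2} in this proof: what you call the delicate step (compatibility of the map ${}^{\co U}M\otimes_A{}^{\co U}N\to{}^{\co U}(M\otimes_U N)$ with $u\mpact-$) is the dual check on the quasi-inverse functor, equivalent to but computationally different from the paper's verification. Either route is fine, but if you follow yours you should expect to need~\eqref{Tch3}--\eqref{Tch5} rather than the YD axiom directly.
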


  \begin{proof}
    \
 \begin{enumerate}
    \compactlist{99}
  \item
    Thanks to Theorem \ref{rhodia}, it is sufficient  to check the statement for Hopf bimodules of the form $U \otimes_A P$ for $P \in \ydu$, and with Hopf bimodule structure as discussed in Lemma \ref{aeun}.
    We claim that
    \begin{equation}
      \label{rosenthalerplatz}
      \begin{array}{rcl}
            \zeta_{U \otimes_AP, U \otimes_A Q} \colon \quad    (U \otimes_A P) \otimes_{\mkern 1mu U}  (U \otimes_A Q) &\to& U \otimes_A (P \otimes_A Q),
        \\[1mm]
        (u \otimes_A p) \otimes_{\mkern 1mu U}  (v \otimes_A q) &\mapsto&
        u_{(1)}v \otimes_A (p_{[0]}  \otimes_A p_{[1]} u_{(2)} q),
\\[1mm]
  (1_U \otimes_A p_+) \otimes_{\mkern 1mu U}  (u \otimes_A p_-q) &\mapsfrom& u \otimes_A (p  \otimes_A q),
      \end{array}
      \end{equation}
    where $P, Q \in \ydu$, and where $P$ is seen as a {\em left} Hopf-Galois module in the sense of \S\ref{lostbuddha}, is an isomorphism of Hopf bimodules, {\em i.e.}, of $\Ae$-bimodules which, at the same time, is left and right $U$-linear and left and right $U$-colinear. If we call $\zeta :=             \zeta_{U \otimes_AP, U \otimes_A Q}$ and $\zeta^{-1}$ the map in the last line of \eqref{rosenthalerplatz}, then showing that $\zeta \circ \zeta^{-1}$ yields the identity on $U \otimes_A (P \otimes_A Q)$ is quickly seen by Eq.~\eqref{Nch2}; on the other hand, we compute:
    \begin{equation*}
      \begin{split}
         (\zeta^{-1} \circ \zeta)\big((u \otimes_A p) \otimes_{\mkern 1mu U}  (v \otimes_A q)\big)
                &=
        \zeta^{-1}\big(u_{(1)}v \otimes_A p_{[0]}  \otimes_A p_{[1]} u_{(2)} q\big)
        \\
        &
        =
        (1_U \otimes_A  p_{[0]+}) \otimes_{\mkern 1mu U}   (u_{(1)}v \otimes_A p_{[0]-} p_{[1]} u_{(2)} q)
        \\
        &
        =
        (1_U \otimes_A  p) \otimes_{\mkern 1mu U}   u(v \otimes_A q) =  (u \otimes_A  p) \otimes_{\mkern 1mu U}   (v \otimes_A q),
      \end{split}
      \end{equation*}
    using \eqref{Nch3} in the third step and the left and right $U$-action from Eqs.~\eqref{mist} on $U \otimes_A P$ in the last. Next, with respect to the right actions from \eqref{kran1} resp.\ \eqref{mist}, it is obvious that $\zeta$ is a morphism of right $U$-modules, while for the left $U$-actions, we have for $w \in U$,
\begin{equation*}
      \begin{split}
        \zeta \pig(w \big((u \otimes_A p) \otimes_{\mkern 1mu U}  (v \otimes_A q)\big) \pig)
        & =
        \zeta \big((w_{(1)}u \otimes_A w_{(2)}p) \otimes_{\mkern 1mu U}  (v \otimes_A q)\big)
        \\
        &=
        w_{(1)}u_{(1)}v \otimes_A (w_{(3)}p)_{[0]} \otimes_A (w_{(3)}p)_{[1]} w_{(2)} u_{(2)} q
         \\
        &=
        w_{(1)}u_{(1)}v \otimes_A w_{(2)}p_{[0]} \otimes_A w_{(3)}p_{[1]} u_{(2)} q
         \\
        &=
        w \zeta\big((u \otimes_A p) \otimes_{\mkern 1mu U}  (v \otimes_A q)\big),
      \end{split}
      \end{equation*}
using \eqref{mist} and \eqref{kran1} here repeatedly, while the YD property \eqref{yd2} was used in step three; hence, $\zeta$ is left $U$-linear.
As for colinearity, it is obvious, in turn, that $\zeta$ is left $U$-colinear, while right $U$-colinearity results from:
\begin{equation*}
      \begin{split}
&        (\rho \circ        \zeta) \big((u \otimes_A p) \otimes_{\mkern 1mu U}  (v \otimes_A q)\big)
        \\
        &
        =
        \rho \big( u_{(1)}v \otimes_A (p_{[0]}  \otimes_A p_{[1]} u_{(2)} q) \big)
        \\
&= \pig( u_{(1)}v_{(1)} \otimes_A \big(p_{[0]} \otimes_A  (p_{[2]} u_{(3)} q)_{[0]}\big)\pig) \otimes_A (p_{[2]} u_{(3)} q)_{[1]}p_{[1]} u_{(2)}v_{(2)}
       \\
       &= \big( u_{(1)}v_{(1)} \otimes_A (p_{[0]} \otimes_A  p_{[1]} u_{(2)} q_{[0]})\big) \otimes_A p_{[2]} u_{(3)} q_{[1]} v_{(2)}
       \\
&= \zeta\big((u_{(1)} \otimes_A p_{[0]}) \otimes_{\mkern 1mu U}   (v_{(1)} \otimes_A q_{[0]})\big) \otimes_A p_{[1]} u_{(2)} q_{[1]} v_{(2)}
       \\
       &=
   \big((\zeta \otimes_A U) \circ \rho\big)\big((u \otimes_A p) \otimes_{\mkern 1mu U}  (v \otimes_A q)\big) 
       ,
      \end{split}
      \end{equation*}
where we used \eqref{mist} in step two, the YD property \eqref{yd2} and the diagonal right coaction on the tensor product of YD modules in step three, and again \eqref{mist} along with \eqref{kran1} in the last step. In order to obtain a monoidal equivalence, verifying the hexagon (and square) axioms for the isomorphisms $\zeta_{U\otimes_A P, U \otimes_A Q}$ and the functor $U \otimes_A -$ is then a direct check, and therefore also for their inverses and the functor  ${}^{\co\,U } \! (-)$, which proves the monoidality of the two functors.
    \item
This now simply follows by the first part combined with Proposition \ref{resistente}.
 \end{enumerate}
 This concludes the proof.
 \end{proof}

  Finally, let us show that the monoidal equivalences from Theorem \ref{spielzeit} are actually equivalences of {\em (pre-)braided} monoidal categories, where $\ydu$, as mentioned before, is braided by the natural transformation in Eq.~\eqref{ydbraid}.
%
%% \begin{equation}
%%   \label{ydbraid}
%% \gs_{P,Q} \colon P \otimes_A Q \to Q \otimes_A P, \quad p \otimes_A q \mapsto q_{[0]} \otimes_A q_{[1]} p,
%%   \end{equation}
%% commonly known as {\em (left-right) Yetter-Drinfel'd braiding}. 
%% As this braiding for a general left bialgebroid is not necessarily invertible (but is so if the left bialgebroid is a left Hopf algebroid and $U_\ract$ right $A$-flat such that $Q$ becomes a left Hopf-Galois module, in which case the inverse is given by $q \otimes_A p \mapsto q_- p \otimes_A q_+$), we sometimes underline this fact by calling the braiding a {\em pre-}braiding.
%
Generalising the respective statement \cite[Thm.~6.3]{Schau:HMAYDM} for Hopf algebras, we can prove, by simultaneously assuming the existence of both a left and right Hopf structure on the same left bialgebroid in the sense of \S\ref{sokc}, that is, a left bialgebroid equipped with two different and invertible Hopf-Galois maps:

\begin{theorem}
  \label{buddha}
  Let $(U,A)$ be simultaneously a left and right Hopf algebroid (over a left bialgebroid) such that $U_\ract$ is flat over $A$.
  Then the monoidal category 
$\big(\tetra, \otimes_{\mkern 1mu U} , U\big)$ is braided with braiding
\begin{equation}
  \label{hopfbraid}
  \tau_{M,N} \colon M \otimes_{\mkern 1mu U}  N \to N \otimes_{\mkern 1mu U}  M, \quad m \otimes_{\mkern 1mu U}  n
  \mapsto
  m_{[1]\smap } n_{\smap } n_{\smam  +} \otimes_{\mkern 1mu U}    m_{[1]\smam  } m_{[0]} n_{\smam  -} 
,
\end{equation}
and the equivalence
between
      $\big(\tetra, \otimes_{\mkern 1mu U}, U\big)$
      and
      $\big(\ydu, \otimes_A, A\big)$
from Theorem \ref{spielzeit} becomes one of braided monoidal categories. 
\end{theorem}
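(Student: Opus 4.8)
The plan is to leverage the monoidal equivalence from Theorem \ref{spielzeit}(i) and transport the Yetter-Drinfel'd braiding \eqref{ydbraid} across it. Concretely, since every Hopf bimodule is of the form $U_\ract \otimes_A P$ for a left-right YD module $P$ (Theorem \ref{rhodia}), and the functor $U_\ract \otimes_A -$ together with the isomorphisms $\zeta_{U \otimes_A P, U \otimes_A Q}$ from \eqref{rosenthalerplatz} is monoidal, one \emph{defines} a candidate braiding $\tau$ on $\big(\tetra, \otimes_{\mkern 1mu U}, U\big)$ by the requirement that the square
\begin{equation*}
  \begin{array}{ccc}
    (U \otimes_A P) \otimes_{\mkern 1mu U} (U \otimes_A Q) & \xrightarrow{\ \tau\ } & (U \otimes_A Q) \otimes_{\mkern 1mu U} (U \otimes_A P)
    \\[1mm]
    \big\downarrow{\scriptstyle\zeta} & & \big\downarrow{\scriptstyle\zeta}
    \\[1mm]
    U \otimes_A (P \otimes_A Q) & \xrightarrow{\ \id \otimes_A \gs_{P,Q}\ } & U \otimes_A (Q \otimes_A P)
  \end{array}
\end{equation*}
commutes, i.e.\ $\tau := \zeta^{-1} \circ (U \otimes_A \gs) \circ \zeta$. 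By construction this $\tau$ is then automatically a morphism of Hopf bimodules (being a composite of such), natural in both arguments, and automatically satisfies the hexagon identities because $\gs$ does in $\ydu$ and $\zeta$ is monoidal; the equivalence is then braided by fiat. So the entire content reduces to a single computation: unwinding $\zeta^{-1} \circ (U \otimes_A \gs) \circ \zeta$ on a general element $(u \otimes_A p) \otimes_{\mkern 1mu U} (v \otimes_A q)$ and matching the result with the closed-form formula \eqref{hopfbraid}.

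The key steps, in order, are as follows. First I would recall that on $U \otimes_A P$ the relevant coinvariants are $\{1_U\} \otimes_A P$, that the right $U$-coaction is $u \otimes_A p \mapsto (u_{(1)} \otimes_A p_{[0]}) \otimes_A p_{[1]}u_{(2)}$ from \eqref{mist}, and that the translation map of $P$ viewed as a \emph{left} Hopf-Galois module satisfies \eqref{Nch1}--\eqref{Nch6} and, via \eqref{fabercastell3}, is compatible with the left $U$-action. Second, I would apply $\zeta$ to $(u \otimes_A p) \otimes_{\mkern 1mu U} (v \otimes_A q)$ to get $u_{(1)}v \otimes_A (p_{[0]} \otimes_A p_{[1]}u_{(2)}q)$, then apply $U \otimes_A \gs_{P,Q}$ using \eqref{ydbraid} to swap the two tensor factors (producing a contribution $(p_{[1]}u_{(2)}q)_{[0]} \otimes_A (p_{[1]}u_{(2)}q)_{[1]} p_{[0]}$ in the $Q \otimes_A P$ slot, which one then expands using the Hopf-bimodule structure of $U \otimes_A Q$, i.e.\ \eqref{mist} again together with the YD property \eqref{yd2} of $Q$), and finally apply $\zeta^{-1}$, which introduces the translation map $(-)_+ (-)_-$ of the first $P$-component. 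Third, I would simplify the resulting expression using the axioms \eqref{Nch2}--\eqref{Nch4}, the fact that $\zeta^{-1}$ lands in the coinvariant form $(1_U \otimes_A -)$, counitality, and the relation \eqref{fabercastell3} relating $(-)_\pm$ to $(-)_{[1]\mp}$ and $\varepsilon((-)_{[1]+})$ on the \emph{right}-Hopf-Galois side, so as to reconstitute precisely the terms $m_{[1]\smap} n_{\smap} n_{\smam +}$ and $m_{[1]\smam} m_{[0]} n_{\smam -}$ appearing in \eqref{hopfbraid} once one re-expresses $m = u \otimes_A p$, $n = v \otimes_A q$ back in terms of the abstract Hopf-bimodule (co)actions. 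A cross-check that $\tau_{M,N}$ as written in \eqref{hopfbraid} is well-defined over $\otimes_{\mkern 1mu U}$ and lands in $N \otimes_{\mkern 1mu U} M$ (using \eqref{accedi1}--\eqref{accedi4} and the Takeuchi conditions) should be recorded as well, though this is subsumed once the identification with $\zeta^{-1}(U \otimes_A \gs)\zeta$ is established.

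The main obstacle I anticipate is purely bookkeeping: the formula \eqref{hopfbraid} mixes three different ``Hopf-structure'' maps — the right coaction $\rho_M$ on $M$ (giving $m_{\smap} \otimes_A m_{\smam}$), the translation map of $M$ as a \emph{right} Hopf-Galois comodule (also written with $\smap/\smam$ but applied to $m_{[1]}$, hence the notation $m_{[1]\smap}$), and the translation map of $N$ as a \emph{left} Hopf-Galois comodule (written $n_- \otimes_A n_+$, applied after the right coaction of $N$, hence $n_{\smam -}$, $n_{\smam +}$) — and keeping the Sweedler indices straight through the chain $\zeta, U \otimes_A \gs, \zeta^{-1}$ while invoking the right combination among \eqref{Mch1}--\eqref{Mch6}, \eqref{Nch1}--\eqref{Nch6}, \eqref{fabercastell0}, \eqref{fabercastell3}, and the YD identity \eqref{yd2}, is delicate. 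A secondary point is verifying naturality of $\tau$ in both variables as a transformation of functors on all of $\tetra$ (not just on objects of the form $U \otimes_A P$); here one uses that ${}^{\co\,U}(-)$ and $U \otimes_A -$ are mutually inverse equivalences (Theorem \ref{rhodia}) together with naturality of $\gs$ in $\ydu$, so that naturality of $\tau$ is inherited. Once the closed form is matched, the hexagon axioms require no separate verification since they transport along the monoidal isomorphism $\zeta$ from the known braided structure on $\big(\ydu, \otimes_A, A\big)$; asserting this, together with the fact that the monoidal functor of Theorem \ref{spielzeit}(i) now intertwines $\tau$ and $\gs$ by the very definition of $\tau$, completes the proof.
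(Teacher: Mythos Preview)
Your approach is essentially that of the paper: transport the Yetter--Drinfel'd braiding along $\zeta$ and verify that the result matches the closed formula \eqref{hopfbraid}, whence the braiding axioms and the compatibility with the equivalence come for free. The one ingredient you do not name but will need is the mixed identity \eqref{mampf3}: in the paper's computation this is precisely the step that rewrites $u_{(2)\smap}\otimes_A u_{(2)\smam}\otimes_{\Aop} u_{(1)}$ as $u_{\smap}\otimes_A u_{\smam+}\otimes_{\Aop} u_{\smam-}$ and thereby produces the terms $n_{\smam+}$, $n_{\smam-}$ in \eqref{hopfbraid}; the paper also uses \eqref{mampf3} to verify directly that \eqref{hopfbraid} is $U$-balanced, a check you defer (legitimately) to the identification with $\zeta^{-1}(U\otimes_A\gs)\zeta$. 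A minor correction to your parsing of the formula: $m_{[1]\smap}\otimes_A m_{[1]\smam}$ is the right Hopf structure of $U$ applied to the $U$-valued component $m_{[1]}$ of $\rho_M(m)$, and $n_{\smam+}\otimes_{\Aop} n_{\smam-}$ is the left Hopf structure of $U$ applied to the $U$-valued component $n_{\smam}$ of the right Hopf--Galois translation map on $N$---not further comodule translation maps on $M$ or $N$ themselves.
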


\begin{proof}
  The fact that the map \eqref{hopfbraid} is well-defined over the {\em four} involved Sweedler presentations follows, after a little thought, the standard way using
  \eqref{rain}, \eqref{Sch9}, \eqref{Mch1}, \eqref{Tch9}, and \eqref{Tch6}, so that we skip its details. To show that it is balanced over the tensor product over $U$, we need the fact that $U$ is both left and right Hopf and hence the mixed Equations \eqref{mampf1}--\eqref{mampf3} hold. Indeed, on the one hand, 
  \begin{eqnarray*}
    \tau_{M,N} \colon mu \otimes_{\mkern 1mu U}  n
 & \mapsto &
  (mu)_{[1]\smap } n_{\smap } n_{\smam  +} \otimes_{\mkern 1mu U}    (mu)_{[1]\smam  } (mu)_{[0]} n_{\smam  -}
\\
&
   {\overset{\scriptscriptstyle{
\eqref{accedi2}
}}{=}}
& 
  (m_{[1]}u_{(2)})_{\smap } n_{\smap } n_{\smam  +} \otimes_{\mkern 1mu U}   (m_{[1]}u_{(2)})_{\smam  }
m_{[0]}u_{(1)} n_{\smam  -}
\\
&
 {\overset{\scriptscriptstyle{
\eqref{scalea}
}}{=}}
  &
  m_{[1]\smap }u_{(2)\smap } n_{\smap } n_{\smam  +} \otimes_{\mkern 1mu U}   u_{(2)\smam  } m_{[1]\smam  }
  m_{[0]}u_{(1)} n_{\smam  -}
    \end{eqnarray*}
hold, while, on the other hand,
  \begin{eqnarray*}
    \tau_{M,N} \colon m \otimes_{\mkern 1mu U}  un
 & \mapsto &
  m_{[1]\smap } (un)_{\smap } (un)_{\smam  +} \otimes_{\mkern 1mu U}    m_{[1]\smam  } m_{[0]} (un)_{\smam  -}
\\
&
  {\overset{\scriptscriptstyle{
\eqref{scalea}, \eqref{scalea2}
}}{=}}
& 
  m_{[1]\smap } u_{\smap } n_{\smap } n_{\smam  +} u_{\smam  +} \otimes_{\mkern 1mu U}    m_{[1]\smam  } m_{[0]} u_{\smam  -} n_{\smam  -}
  \\
&
  {\overset{\scriptscriptstyle{
\eqref{mampf3}
}}{=}}
  &
  m_{[1]\smap }u_{(2)\smap } n_{\smap } n_{\smam  +} \otimes_{\mkern 1mu U}   u_{(2)\smam  } m_{[1]\smam  }
  m_{[0]}u_{(1)} n_{\smam  -}
    \end{eqnarray*}
is true,  using the tensor product over $U$ in the last step. Hence, the map $    \tau_{M,N}$ is $U$-balanced.

  Following the idea in the proof of \cite[Thm.~6.3]{Schau:HMAYDM},
by Theorem \ref{rhodia} and its variant for the left-left case the starting point of which has been mentioned in Remark \ref{nunzia}, it is enough to consider two Hopf modules of the form $U \otimes_A P$ and $U \otimes_A Q$ with $P, Q \in \ydu$ as in Lemma~\ref{aeun}. Observe that
  \begin{eqnarray*}
&&
    \big( \zeta^{-1} \circ (U \otimes_A \gs_{P,Q}) \circ \zeta \big)\big((u \otimes_A p) \otimes_{\mkern 1mu U}  (v \otimes_A q)\big)
\\
&
  {\overset{\scriptscriptstyle{
%\eqref{wes2}
}}{=}}
&
  \big( \zeta^{-1} \circ (U \otimes_A \gs_{P,Q}) \big)
      \big(u_{(1)}v \otimes_A (p_{[0]}  \otimes_A p_{[1]} u_{(2)} q)\big)
        \\
        &
  {\overset{\scriptscriptstyle{
%\eqref{wes2}
}}{=}}
&
  \zeta^{-1}\pig(
u_{(1)}v \otimes_A
\big((p_{[1]} u_{(2)} q)_{[0]} \otimes_A 
(p_{[1]} u_{(2)} q)_{[1]}  p_{[0]}  \big)
  \pig)
        \\
       &
  {\overset{\scriptscriptstyle{
%\eqref{wes2}
}}{=}}
&
        \big(1_U \otimes_A (p_{[1]} u_{(2)} q)_{[0]+} \big)
\otimes_{\mkern 1mu U}
\big(u_{(1)}v \otimes_A
(p_{[1]} u_{(2)} q)_{[0]-} (p_{[1]} u_{(2)} q)_{[1]}  p_{[0]}  \big)
\\
    &
  {\overset{\scriptscriptstyle{
\eqref{Nch3}
}}{=}}
&
        (1_U \otimes_A p_{[1]} u_{(2)} q )
\otimes_{\mkern 1mu U}
(u_{(1)}v \otimes_A
p_{[0]})
       \\
        &
  {\overset{\scriptscriptstyle{
\eqref{takeuchiyippieh}
}}{=}}
&
        \big(1_U \otimes_A (p_{[1]} u_{(2)} \bract \gve(v_{(2)})) q \big)
\otimes_{\mkern 1mu U}
\big(u_{(1)}v_{(1)} \otimes_A
p_{[0]}\big)
       \\
       &
  {\overset{\scriptscriptstyle{
\eqref{Tch2}, \eqref{Tch9}
}}{=}}
&
  \big(
  (p_{[1]} u_{(2)})_{\smap (1)} t \gve(v_{(2)})) (p_{[1]} u_{(2)})_{\smam  }
 \otimes_A (p_{[1]} u_{(2)})_{\smap (2)}  q \big)
\otimes_{\mkern 1mu U}
\big(u_{(1)}v_{(1)} \otimes_A
p_{[0]}\big)
       \\
        &
  {\overset{\scriptscriptstyle{
\eqref{mist}, \eqref{Tch7}
}}{=}}
&
  (p_{[1]} u_{(2)})_{\smap } \big(v_{(2)\smap } 
 \otimes_A q \big) v_{(2)\smam  }  (p_{[1]} u_{(2)})_{\smam  }
\otimes_{\mkern 1mu U}
\big(u_{(1)}v_{(1)} \otimes_A
p_{[0]}\big)
       \\
        &
  {\overset{\scriptscriptstyle{
\eqref{mist}
}}{=}}
&
  (p_{[1]} u_{(2)})_{\smap } \big(v_{(2)\smap } 
 \otimes_A q \big) v_{(2)\smam  }  
\otimes_{\mkern 1mu U}
(p_{[1]} u_{(2)})_{\smam  } \big(u_{(1)} \otimes_A
p_{[0]}\big)v_{(1)}
%       \\
%
\end{eqnarray*}
  \begin{eqnarray*}
       &
  {\overset{\scriptscriptstyle{
\eqref{mampf3}
}}{=}}
&
  (p_{[1]} u_{(2)})_{\smap } \big(v_{\smap } 
 \otimes_A q \big) v_{\smam  +}  
\otimes_{\mkern 1mu U}
(p_{[1]} u_{(2)})_{\smam  } \big(u_{(1)} \otimes_A
p_{[0]}\big)v_{\smam  -}
       \\
        &
         {\overset{\scriptscriptstyle{
               \eqref{mist}
}}{=}}
&
  (u \otimes_A p)_{[1]\smap }
  (v \otimes_A q)_{\smap } (v \otimes_A q)_{\smam  +}  
\otimes_{\mkern 1mu U}
(u \otimes_A p)_{[1]\smam  } (u \otimes_A
p)_{[0]} (v \otimes_A q)_{\smam  -},  
   \end{eqnarray*}
  where we used the fact that in this case $U \otimes_A Q$ can be seen as a (right) Hopf-Galois comodule and $U \otimes_A P$ as a left one, in the sense of \S\ref{lostbuddha}. Hence, the braiding $\tau$ and the YD braiding $\gs$ are intertwined by $\zeta$ (which even saves us from explicitly proving that $\tau$ is, in fact, a braiding) and therefore the monoidal equivalence becomes a braided monoidal equivalence.
  \end{proof}

\begin{rem}
  Note that whenever $m \in  \coM $ and $ n \in N^{\mkern 1mu \mathrm{co} \, U} $ are left resp.\ right coinvariant elements, by  Eq.~\eqref{schnief} (and its left analogue) the braiding $\tau_{M,N}$ reduces to the tensor flip. Furthermore, combining Proposition \ref{resistente} with Theorem \ref{buddha}, we can define a braiding
  $$
\tau'_{M,N} \colon M \bx_U N \to N \bx_U M,
$$
on the monoidal category $\big(\tetra, \bx_U\big)$ by $\tau'_{M,N} := \xi_{N,M} \circ \tau_{M,N} \circ \xi^{-1}_{M,N}$, using \eqref{vipitenojoghurt} and~\eqref{hopfbraid}, such that \eqref{cuffie} becomes an equivalence of braided monoidal categories.
  \end{rem}

\begin{rem}
  Under the assumptions of Theorem \ref{buddha}, according to \cite[Lem.~1.1]{CheGavKow:DFAB}, there is a (strict) monoidal equivalence between $\ydu$ and $\yd$ induced by the identity on objects, that is to say, between the left and the right weak centre of the category ${}_U \cM$ (which are themselves monoidal categories). This equivalence can be easily shown to be compatible with the respective braidings $\gs$ on $\ydu$ from \eqref{ydbraid} and $\gs' \colon p \otimes_A q \mapsto p_{(-1)}q \otimes_A p_{(0)}$ on $\yd$; more precisely, in this case $\gs' = \gs^{-1}$ as seen by inserting the left coaction $\gl_P(p) = p_- \otimes_A p_+$ from \eqref{fabercastell3} into $\gs'$ given above.
  One therefore equally obtains an equivalence
  $$
\big(\tetra, \tau) \simeq \big(\yd, \gs'\big)
  $$
of braided monoidal categories.
   \end{rem}

\section{Examples}
\label{exa}

So far, only a few examples of (left or right) Hopf algebroids are known that are not full Hopf algebroids, in the sense mentioned in the introduction that instead of one, one rather has {\em two} (left and right) bialgebroid structures, plus an antipode intertwining the two. The difficulty here consists in the fact that these bialgebroid structures are of mutually opposite chirality, in the sense that a left bialgebroid, as a defining property, induces a monoidal structure on its {\em left} (but {\em not} right) modules and, accordingly, the counit is a {\em left} character that turns the base algebra into the unit in this monoidal category; while for a right bialgebroid, all this is true when examining the right handed version. An antipode is then, in particular, an anti-ring and anti-coring morphism between these two categorically different bialgebroid structures.

For example, in the context of universal enveloping algebras of Lie-Rinehart algebras, as seen in \cite[Ex.~2]{KowKra:DAPIACT}, these are always left and right Hopf algebroids over a left bialgebroid, and admit an antipode in the above sense if the base algebra is equipped with a {\em flat right Lie-Rinehart connection}, as shown in \cite[Prop.~3.12]{KowPos:TCTOHA},
which, however, may not always exist \cite{KraRov:ALRAWNA};
therefore, in these specific situations,
there cannot be a right character, or right counit, and hence the aforementioned right bialgebroid structure is missing.

In the following, we will focus on another class of examples given by one of the motivating constructions for bialgebroid theory, that is, by the so-called {\em Ehresmann-Schauenburg} bialgebroid attached to Hopf-Galois extensions with respect to a Hopf algebra $H$. These are always (left or right) Hopf algebroids over a left bialgebroid, and also full Hopf if the antipode of $H$ is involutive, which, however, is only sufficient and not necessary. We conjecture that there might be cases in which the Ehresmann-Schauenburg bialgebroid does not admit an antipode.

\subsection{The Ehresmann-Schauenburg bialgebroid}
The Ehresmann-Schauenburg bialgebroid, in its original form attached to a right Hopf-Galois extension, has been introduced
in \cite[Thm.~6.3]{Schau:BONRAASTFHB}. Since in this paper we have been mainly dealing with right Hopf algebroids, let us follow an alternative construction attached to {\em left} Hopf-Galois extensions, as presented in \cite[Rem.~2.4]{KowWeb:TSEACFHSOHA}, to which we refer for details and proofs. 

Let $H$ be a Hopf algebra over a field $k$ with invertible antipode $S$, 
let $A$ be a left $H$-comodule algebra, and let $B := {}^{\co H}\! A$ denote the subalgebra of left $H$-coinvariants.
If the map
 $$
\gamma \colon  A \otimes_B A \to H \otimes A, \quad a \otimes a' \mapsto a_{(-1)} \otimes a_{(0)} a',
 $$
is bijective, one speaks of a {\em (left) Hopf-Galois extension $B \subseteq A$}, which is
called {\em faithfully flat} if $A$ is a faithfully flat right $B$-module, or, equivalently, by applying a left handed variant of \cite[Thm.~1]{Schn:PHSFAHA}, a faithfully flat left $B$-module.
The respective {\em translation map}
is defined by
\begin{equation}
	\label{berlinale3}
\gamma^{-1} \circ (H \otimes \eta_A) \colon H \to A \otimes_B A, \quad h \mapsto h^{[1]} \otimes_B h^{[2]},
\end{equation}
where $\eta_A\colon k \to A$ denotes the unit of $A$.
Observe that the image of this map lands in the $B$-bimodule centre of $A \otimes_B A$, that is to say,
\begin{equation}
  \label{immernochsokalt}
b h^{[1]} \otimes_B h^{[2]} = h^{[1]} \otimes_B h^{[2]}b
\end{equation}
for all $b\in B$ and $h\in H$.
Similarly to \cite[Rem.~3.4]{Schn:RTOHGE}, one can show that the identities
\begin{equation}
	\label{allthat2}
	\begin{array}{rcl}
	  {h^{[ 1] }}_{\! (-1)}  \otimes  {h^{[ 1] }}_{\! (0)} {h^{[ 2] }} = h \otimes 1_A,
          &&
{a_{(-1)}}^{\! [ 1] } \otimes_B   {a_{(-1)}}^{\! [ 2 ] } a_{(0)} 
		 = a \otimes_B 1_A,
		\\[1mm]
		h^{[ 1] } h^{[ 2] } = \gve(h)1_A, &&
		(hg)^{[ 1] } \otimes_B (hg)^{[ 2] }
		= h^{[ 1 ] }g^{[ 1] } \otimes_B g^{[ 2] } h^{[ 2] },
		\\[1mm]
 {h^{[ 1] }}_{\!(-1)} \otimes  {h^{[ 1] }}_{\!(0)} \otimes_B h^{[2]}
                  & = &
h_{(1)}  \otimes
 {h_{(2)}}^{\! [ 1] } \otimes_B  {h_{(2)}}^{\! [ 2] },
		\\[1mm]
	        Sh_{(2)}
                \otimes
	  {h_{(1)}}^{\! [ 1] } \otimes_B  {h_{(1)}}^{\! [ 2] }
          & = &
          {h^{[ 2] }}_{\! (-1)} \otimes h^{[ 1] } \otimes_B  {h^{[ 2 ]}}_{\! (0)} ,
	\end{array}
\end{equation}
are true for all $h, g \in H$ and $a\in A$.
Next, consider the space
$$
\mathscr{H} := {}^{\mathrm{co}H} \! (A\otimes A)
:= \{a\otimes a'\in A\otimes A \mid a_{(-1)}a'_{(-1)} \otimes  a_{(0)}\otimes a'_{(0)} = 1 \otimes a\otimes a' \}.
$$
of left $H$-coinvariants: analogously to \cite[Thm.~6.3]{Schau:BONRAASTFHB}, this can be given the structure of a left bialgebroid over $B = {}^{\co H}\! A$, with source and target maps
$$
s,t \colon B \to \mathscr{H}, \quad  s(b):=b\otimes 1, \quad t(b):=1\otimes b,
$$
ring structure
$$
	(a\otimes a')(c\otimes c') := ac\otimes c'a', \qquad 1_{\mathscr{H}} := 1_A \otimes 1_A,
$$
and finally $B$-coring structure
\begin{equation}
	\label{blauetinte2}
	\begin{split}
	  \gD_\mathscr{H} \colon \mathscr{H} &\to  \mathscr{H} \otimes_B  \mathscr{H},
          \\
	  \gve_\mathscr{H} \colon
          \mathscr{H} &\to B,
	\end{split}
        \quad
	\begin{split}
		a\otimes a'&\mapsto 
                \big (a \otimes a'_{(-1)}{}^{\! [1]} \big) \otimes_B
                \big( a'_{(-1)}{}^{\! [2]} \otimes a'_{(0)} \big),
                \\
		a\otimes a'&\mapsto aa'.
	\end{split}
\end{equation}
As shown in \cite[Rem.~2.4]{KowWeb:TSEACFHSOHA}, this version of an Ehresmann-Schauenburg left bialgebroid can be turned into a {\em right} Hopf algebroid by defining
\begin{equation}
  \label{internetgehtnich2}
(a\otimes a')_\smap \otimes_B  (a\otimes a')_\smam
:= 
                \big (a_{(-1)}{}^{\! [1]} \otimes a'\big) \otimes_B
                \big(a_{(-1)}{}^{\! [2]} \otimes a_{(0)} \big).
\end{equation}
However, if the antipode $S$ of $H$ is involutive, then
it was shown in {\em loc.~cit.} that
the Ehresmann-Schauenburg bialgebroid ${}^{\mathrm{co}H} \! (A\otimes A)$ of left coinvariants is isomorphic to the left bialgebroid $(A\otimes A)^{\mathrm{co}H}$ based on {\em right coinvariants}, which, in turn, was shown to be a {\em left} Hopf algebroid in \cite{HanMaj:HGEATHA}. Moreover, one can then prove:

\begin{prop}
  \label{hrensko}
Let $H$ be a Hopf algebra over a field with involutive antipode $S$, and let 
$B \subseteq A$
be a faithfully flat left Hopf-Galois extension. Then the Ehresmann-Schauenburg left bialgebroid $\mathscr{H}$ of left (hence also right) coinvariants is a full Hopf algebroid, with antipode given by
$$
\mathscr{S} \colon \mathscr{H} \to \mathscr{H}, \quad a \otimes a' \mapsto a' \otimes a,
$$
that is, by the tensor flip.
  \end{prop}

\begin{proof}
  That the tensor flip yields a well-defined map ${}^{\mathrm{co}H} \! (A\otimes A) \to {}^{\mathrm{co}H} \! (A\otimes A)$ in case $S^2 = \id$ can be proven, for example, by adapting \cite[Eq.~(2.22)]{KowWeb:TSEACFHSOHA} to left coinvariants, or by applying the isomorphism $\Xi \colon {}^{\mathrm{co}H} \! (A\otimes A) \to (A\otimes A)^{\mathrm{co}H}$ in Remark 2.4 of {\em op.~cit.}
  This essentially ex\-presses the fact that the tensor flip is a left $H$-comodule endomorphism on $A \otimes A$ with respect to the codiagonal coaction, and hence one concludes that it defines an antipode in the Hopf algebroid sense, by applying (the left coinvariant version of) Proposition 3.2 in \cite{DabLanZan:HAATFQPS}.  
  \end{proof}

  Let us show by two simple examples that $S^2 = \id$ is only sufficient but not necessary, and that also the (Hopf algebroid) antipode can be more complicated than a simple tensor flip. To this end, let us consider two cases in which the Ehresmann-Schauenburg bialgebroid is actually a Hopf algebra (and hence a full Hopf algebroid over the base field $k$):

\begin{example}
  Take $A:=H$, that is, the Hopf algebra $(H, \gD_H, \gve_H, S)$ itself as  left Hopf-Galois extension of the ground field $B:=k$ with left $H$-coaction given by the coproduct, in which case the translation map \eqref{berlinale3} turns out as $h^{[1]} \otimes h^{[2]} = h_+ \otimes h_- = h_{(1)} \otimes Sh_{(2)}$, see Example \ref{factis}.  Then
one has
  an isomorphism
    $$
\varphi \colon    \mathscr{H} = {}^{\co H} \! (H \otimes H) = k \bx_H (H \otimes H) \xrightarrow{\
      \simeq \ } H, \quad h \otimes g \mapsto \gve_H(h)g,
$$of $k$-modules,
with inverse $h \mapsto Sh_{(1)} \otimes h_{(2)}$.
This map is even an isomorphism of $k$-bialgebras, seen as $k$-bialgebroids: for example,
%for $h \in H$, one has
%$
%(\gve_\mathscr{H} \circ \gvf^{-1})(h) = Sh_{(1)} h_{(2)} = \gve_H(h).
%$
%Likewise,
\begin{equation*}
  \begin{split}
\big((\gvf \otimes \gvf) \circ \gD_\mathscr{H} \circ \gvf^{-1}\big)(h)
& = \gvf (Sh_{(1)} \otimes h_{(2) +}) \otimes \gvf (h_{(2) -} \otimes h_{(3) })
\\
& = \gvf (Sh_{(1)} \otimes h_{(2)}) \otimes \gvf (Sh_{(3)} \otimes h_{(4) })
= h_{(1)} \otimes h_{(2)} = \gD_H(h).
  \end{split}
\end{equation*}
  In the same spirit, one computes (or defines) the (in this case unique) antipode $\mathscr{S}$ of $\mathscr{H}$:
\begin{equation*}
  \begin{split}
    \mathscr{S}(h \otimes g) :=
(\gvf^{-1} \circ S \circ \gvf)(h \otimes g)
    & = S^2 g_{(2)} \otimes S g_{(1)} \gve(h)
    = S^2 g_{(2)} \otimes  S (h_{(1)} g_{(1)}) h_{(2)} = S^2 g \otimes h, 
  \end{split}
\end{equation*}
for $h \otimes g \in {}^{\co H} \! (H \otimes H)$. This can now be seen as a special case of a Hopf algebroid antipode, which turns $\mathscr{H}$ into a full Hopf algebroid over $k$ (that is, a Hopf algebra), regardless of whether the antipode was involutive or not. Moreover, in case $S^2 \neq \id$, this also yields an example of an antipode not given by the flip.
\end{example}

\begin{example}
More interesting is the case in which an arbitrary left $H$-comodule algebra $A$ is an extension of $B = k$.  Adapting the proof of Theorem 3.5 in \cite{Schau:HBE} to left coinvariants, one can again show that the $k$-bialgebroid $\mathscr{H}$ is actually a Hopf algebra, with coproduct and counit as in Eqs.~\eqref{blauetinte2}, and antipode (more precisely, {\em op}-antipode, or {\em skew} antipode) given by
$$
\mathscr{S} \colon \mathscr{H} \to \mathscr{H}, \quad a \otimes a' \mapsto
{a_{(-1)}}^{\![1]} a' {a_{(-1)}}^{\![2]} \otimes a_{(0)}. 
$$
We skip the verification of this claim as it is very similar to those in {\em loc.~cit.}, but still want to underline that the antipode of $H$ does not need to be involutive and that $\mathscr{S}$ is more complicated than the flip.
\end{example}

\subsection{Hopf bimodules for the Ehresmann-Schauenburg bialgebroid}

The previous considerations allow us to generalise \cite[Thm.~3.5]{Schau:BONRAASTFHB} from $B = k$ to arbitrary $B$ as well as extend the statement (6.3) in Theorem 6.14 of {\em op.~cit.}, or rather a left coinvariant variant of it. Let us very briefly introduce all categories needed: recall first from {\em op.~cit.}\ the definition of {\em relative Hopf modules} ${}^H \! \cM_A$ resp.\ ${}^{\, H}_A \! \cM$ as right resp.\ left $A$-module in the category of left $H$-comodules, where $H$ is a Hopf algebra (over a field) and $A$ is a left $H$-comodule algebra as before. 
A {\em relative Hopf bimodule} $M \in {}^{\, H}_A \! \cM^{\phantom{H}}_A$ is then an $A$-bimodule, or a left $\Ae$-module, that is an object in ${}^H \! \cM_A \cap {}^{\, H}_A \! \cM$ with respect to the same left $H$-coaction $\gl_M$. Hence,
\begin{equation}
  \label{camelion}
  \gl_M(ama') = (ama')_{(-1)} \otimes  (ama')_{(0)}
  = a_{(-1)} m_{(-1)} a'_{(-1)} \otimes  a_{(0)} m_{(0)} a'_{(0)},
\end{equation}
for all $m \in M$ and $a, a' \in M$, and the category $({}^{\, H}_A \! \cM^{\phantom{H}}_A, \otimes_A, A)$ becomes monoidal when using the left codiagonal comodule structure. Next, by a left coinvariant version of \cite[Lem.~6.7]{Schau:BONRAASTFHB}, the left $H$-comodule algebra $A$ can also be endowed with a {\em right} $\mathscr{H}$-coaction by defining
\begin{equation}
  \label{camelion2}
  \gr_A \colon A \to A \otimes_B \mathscr{H}, \quad a \mapsto  a_{[0]} \otimes_B a_{[1]} := a_{(-1)}\!^{[1]} \otimes_B \big(a_{(-1)}\!^{[2]} \otimes a_{(0)}\big). 
\end{equation}
Note that thanks to Eqs.~\eqref{allthat2}, the algebra $A$ becomes a right $\mathscr{H}^\op$-comodule algebra, that is,
\begin{equation}
  \label{almondo}
\gr(ac) = a_{[0]} c_{[0]} \otimes_B c_{[1]} a_{[1]} 
\end{equation}
for all $a, c \in A$.
Similarly to the above, one can then introduce the category ${}^{\phantom{\mathscr{H}}}_A \!\! \cM^\mathscr{H}_A$ of $A$-bimodules in the category of right $\mathscr{H}$-comodules, that is, objects $M \in {}_A \cM_A$ equipped with a right $\mathscr{H}$-coaction $\gr_M$ such that
\begin{equation}
  \label{farmaciadelcambio}
  \begin{split}
  \gr_M(amc) &= (amc)_{[0]} \otimes_B  (amc)_{[1]}
= a_{[0]} m_{[0]} c_{[0]} \otimes_B  c_{[1]} m_{[1]} a_{[1]}
\\
&=  a_{(-1)}\!^{[1]} m_{[0]} c_{(-1)}\!^{[1]}  \otimes_B  (c_{(-1)}\!^{[2]} \otimes c_{(0)}) m_{[1]} (a_{(-1)}\!^{[2]} \otimes a_{(0)})
\end{split}
  \end{equation}
for all $m \in M$ and $a, c \in A$. The flip here on $\mathscr{H}$ is needed to make this expression well-defined over $B$, which it is thanks to Eqs.~\eqref{immernochsokalt} and \eqref{ha3}. Finally, an $(H, \mathscr{H})$-bicomodule is a right $\mathscr{H}$-comodule and a left $H$-comodule such that the right $\mathscr{H}$-coaction is left $H$-colinear, which allows to define the monoidal category $\big({}^{\,H}_A \! \cM^\mathscr{H}_A, \otimes_A, A\big)$. We have now gathered all necessary information to state the main theorem of this example section:

\begin{theorem}
  \label{weich}
  Let $H$ be a Hopf algebra over a field $k$ and $A$ a faithfully flat left Hopf-Galois extension of $B$. Then
the pair
\begin{equation}
  \label{funky2}
  {}^{\co{H}} \! (-) \colon {}^{\,H}_A \! \cM^\mathscr{H}_A
\quad \raisebox{-3pt}{$\longleftrightarrows$} \quad \ydh \ \colon \!  A \otimes_B - 
  \end{equation}
of mutually adjoint functors induces an equivalence of monoidal categories.
  \end{theorem}

\begin{proof}
  Let us first prove that the coinvariant subspace ${}^{\co {H}} \! M$ of a relative Hopf bimodule $M$ in ${}^{\,H}_A \! \cM^\mathscr{H}_A$ indeed produces a left-right YD module over $\mathscr{H}$. To this end, let $a \otimes a' \in \mathscr{H}$ and compute for $m \in {}^{\co {H}} \! M$
  \begin{eqnarray*}
    &&
\big((a \otimes a')_{(2)} m\big)_{[0]}  \otimes_B  \big((a \otimes a')_{(2)} m\big)_{[1]} (a \otimes a')_{(1)}
\\
&
  {\overset{\scriptscriptstyle{
\eqref{blauetinte2}
}}{=}}
&
\pig(\big( a'_{(-1)}{}^{\! [2]} \otimes a'_{(0)} \big) m\pig)_{[0]}  \otimes_B  \pig(\big( a'_{(-1)}{}^{\! [2]} \otimes a'_{(0)} \big) m \pig)_{[1]}  \big (a \otimes a'_{(-1)}{}^{\! [1]} \big)
\\
&
  {\overset{\scriptscriptstyle{
%\eqref{blauetinte2}
}}{=}}
&
\big( a'_{(-1)}{}^{\! [2]}m a'_{(0)} \big)_{[0]}  \otimes_B  \big( a'_{(-1)}{}^{\! [2]}m a'_{(0)} \big)_{[1]}  \big (a \otimes a'_{(-1)}{}^{\! [1]} \big)
\\
&
  {\overset{\scriptscriptstyle{
\eqref{farmaciadelcambio}
}}{=}}
&
  a'_{(-1)}{}^{\! [2]} \mkern -.5mu _{[0]} m_{[0]} a'_{(0)[0]} \otimes_B
  \big(a'_{(0)[1]}  m_{[1]} a'_{(-1)}{}^{\! [2]} \mkern -.5mu _{[1]} \big)\big(a \otimes a'_{(-1)}{}^{\! [1]} \big)
\\
&
  {\overset{\scriptscriptstyle{
\eqref{camelion2}
}}{=}}
  &
    a'_{(-2)}{}^{\! [2]} \mkern -.5mu _{(-1)} \mkern -.5mu ^{[1]} m_{[0]} a'_{(-1)}{}^{\! [1]}   \otimes_B
    \big(
a'_{(-1)}{}^{\! [2]} \otimes a'_{(0)} \big) 
    m_{[1]} \big( a'_{(-2)}{}^{\! [2]} \mkern -.5mu _{(-1)} \mkern -.5mu ^{[2]} \otimes a'_{(-2)}{}^{\! [2]} \mkern -.5mu _{(0)} \big)\big(a \otimes a'_{(-2)}{}^{\! [1]} \big)
\\
&
  {\overset{\scriptscriptstyle{
\eqref{allthat2}
}}{=}}
&
   S(a'_{(-2)})^{[1]} m_{[0]} a'_{(-1)}{}^{\! [1]}   \otimes_B
    \big(
a'_{(-1)}{}^{\! [2]} \otimes a'_{(0)} \big) 
    m_{[1]} \big( S(a'_{(-2)})^{[2]}a \otimes a'_{(-3)}{}^{\! [1]} a'_{(-3)}{}^{\! [2]} \big)
\\
&
  {\overset{\scriptscriptstyle{
\eqref{allthat2}
}}{=}}
  &
     S(a'_{(-2)})^{[1]} m_{[0]} a'_{(-1)}{}^{\! [1]}   \otimes_B
    \big(
a'_{(-1)}{}^{\! [2]} \otimes a'_{(0)} \big) 
    m_{[1]} \big( S(a'_{(-2)})^{[2]}a \otimes 1 \big)
\\
&
  {\overset{\scriptscriptstyle{
\eqref{ha3}
}}{=}}
  &
       S(a'_{(-2)})^{[1]} S(a'_{(-2)})^{[2]} a m_{[0]} a'_{(-1)}{}^{\! [1]}   \otimes_B
    \big(
a'_{(-1)}{}^{\! [2]} \otimes a'_{(0)} \big) 
    m_{[1]} 
\\
&
  {\overset{\scriptscriptstyle{
\eqref{allthat2}
}}{=}}
  &
 a m_{[0]} a'_{(-1)}{}^{\! [1]}   \otimes_B
    \big(
a'_{(-1)}{}^{\! [2]} \otimes a'_{(0)} \big) 
    m_{[1]} 
\\
&
  {\overset{\scriptscriptstyle{
\eqref{blauetinte2}
}}{=}}
&
  (a \otimes a')_{(1)} m_{[0]}  \otimes_B  (a \otimes a')_{(2)} m_{[1]},
   \end{eqnarray*}
  which is the defining Eq.~\eqref{yd2} for left-right YD modules, as desired. Here, step eight is jus\-tified by the fact that
  \begin{equation}
    \label{lendemain}
 S(a'_{(-2)})^{[1]} \otimes_B  S(a'_{(-2)})^{[2]}a  \otimes 
  a'_{(-1)}{}^{\! [1]}   \otimes_B
  a'_{(-1)}{}^{\! [2]} \otimes a'_{(0)}
  \in
  A \otimes_B B \otimes A \otimes_B A \otimes A
  \end{equation}
  if $a \otimes a' \in {}^{\co H} \! (A \otimes A)$, 
  and hence the Takeuchi property \eqref{ha3} applies, using the expression $s(b) = b \otimes 1$ for the source map. The identity \eqref{lendemain}, in turn, is seen as follows:
  \begin{eqnarray*}
    &&
    \big(A \otimes_B \gl_A \otimes A \otimes_B \otimes A \otimes A \big)
    \big( S(a'_{(-2)})^{[1]} \otimes_B  S(a'_{(-2)})^{[2]}a \otimes 
  a'_{(-1)}{}^{\! [1]}   \otimes_B
  a'_{(-1)}{}^{\! [2]} \otimes a'_{(0)}\big)
\\
&
  {\overset{\scriptscriptstyle{
%\eqref{blauetinte2}
}}{=}}
&
S(a'_{(-2)})^{[1]} \otimes_B  S(a'_{(-2)})^{[2]}\mkern -.5mu _{(-1)} a_{(-1)} \otimes S(a'_{(-2)})^{[2]} \mkern -.5mu _{(0)} a_{(0)} \otimes 
  a'_{(-1)}{}^{\! [1]}   \otimes_B
  a'_{(-1)}{}^{\! [2]} \otimes a'_{(0)}
\\
  &
  {\overset{\scriptscriptstyle{
\eqref{allthat2}
}}{=}}
&
S(a'_{(-2)})^{[1]} \otimes_B  S^2(a'_{(-3)}) a_{(-1)} \otimes S(a'_{(-2)})^{[2]} a_{(0)} \otimes 
  a'_{(-1)}{}^{\! [1]}   \otimes_B
  a'_{(-1)}{}^{\! [2]} \otimes a'_{(0)}
\\
  &
  {\overset{\scriptscriptstyle{
\eqref{allthat2}
}}{=}}
&
S(a'_{(-2)})^{[1]} \otimes_B  1_H \otimes S(a'_{(-2)})^{[2]} a \otimes 
  a'_{(-1)}{}^{\! [1]}   \otimes_B
  a'_{(-1)}{}^{\! [2]} \otimes a'_{(0)},
  \end{eqnarray*}
  using that $A$ is a left $H$-comodule algebra in the first step, whereas the last step follows from
  $$
S^2(a'_{(-1)}) \otimes a \otimes a'_{(0)} = S a_{(-1)} \otimes a_{(0)} \otimes a', 
  $$
which can be easily deduced for any $a \otimes a' \in {}^{\co H} \! (A \otimes A)$. Then, from the right $B$-flatness of $A$ one obtains \eqref{lendemain}.

Vice versa, let $P \in \ydh$ be a left-right YD module over $\mathscr{H}$. Then $A \otimes_B P$ becomes an object in ${}^{\,H}_A \! \cM^\mathscr{H}_A$ if equipped with the following left and right $A$-action, as well as with the following left $H$-coaction and right $\mathscr{H}$-coaction:
  \begin{equation}
\label{mist2}
\begin{array}{rcl}
  c(a \otimes_B p) &:=& ca \otimes_B p,
  \\
  (a \otimes_B p)c &:=& ac_{(-1)}{}^{\! [1]}   \otimes_B
  c_{(-1)}{}^{\! [2]} p  c_{(0)},
  \\
  \gl(a \otimes_B p) &:=& a_{(-1)} \otimes (a_{(0)} \otimes_B p),
  \\
  \gr(a \otimes_B p) &:=& (a_{[0]} \otimes_B p_{[0]}) \otimes_B p_{[1]} a_{[1]},
  \end{array}
  \end{equation}
  for all $p \in P$ and $a, c \in A$, where in the last line the right $\mathscr{H}$-coaction \eqref{camelion2} on $A$ is meant.
Of the numerous necessary verifications to prove that one obtains a relative Hopf bimodule in ${}^{\,H}_A \! \cM^\mathscr{H}_A$ indeed, let us only check the compatibility of the right $A$-action with the right $\mathscr{H}$-coaction, and leave the (slightly easier) rest to the reader. Indeed, for $a, c \in A$ and $p \in P$, we see that
\begin{eqnarray*}
  &&
  \rho\big((a \otimes_B p)c\big)
\\
  & = & \big((a \otimes_B p)c\big)_{[0]} \otimes_B \big((a \otimes_B p)c\big)_{[1]}
\\
&
  {\overset{\scriptscriptstyle{
\eqref{mist2}
}}{=}}
&
\pig(\big( a c_{(-1)}{}^{\! [1]}\big)_{[0]} \otimes_B \big(c_{(-1)}{}^{\! [2]} p c_{(0)} \big)_{[0]}\pig)  \otimes_B \big(c_{(-1)}{}^{\! [2]} p c_{(0)} \big)_{[1]} \big( a c_{(-1)}{}^{\! [1]}\big)_{[1]}
\\
&
  {\overset{\scriptscriptstyle{
\eqref{almondo}
}}{=}}
&
\pig(a_{[0]} c_{(-1)}{}^{\! [1]} \mkern -.5mu _{[0]} \otimes_B \big(c_{(-1)}{}^{\! [2]} p c_{(0)} \big)_{[0]}\pig)  \otimes_B \big(c_{(-1)}{}^{\! [2]} p c_{(0)} \big)_{[1]} c_{(-1)}{}^{\! [1]}\mkern -.5mu _{[1]} a_{[1]}
\\
&
  {\overset{\scriptscriptstyle{
\eqref{camelion2}
}}{=}}
&
\pig(a_{[0]} c_{(-1)}{}^{\! [1]} \mkern -.5mu _{(-1)} \mkern -.5mu ^{[1]} \otimes_B \big(c_{(-1)}{}^{\! [2]} p c_{(0)} \big)_{[0]}\pig)  \otimes_B \big(c_{(-1)}{}^{\! [2]} p c_{(0)} \big)_{[1]} \big(c_{(-1)}{}^{\! [1]} \mkern -.5mu _{(-1)} \mkern -.5mu ^{[2]} \otimes c_{(-1)}{}^{\! [1]} \mkern -.5mu _{(0)} \big)  a_{[1]}
\\
&
  {\overset{\scriptscriptstyle{
\eqref{camelion2}
}}{=}}
&
\pig(a_{[0]} c_{(-2)}{}^{\! [1]} \otimes_B \big(c_{(-1)}{}^{\! [2]} p c_{(0)} \big)_{[0]}\pig)  \otimes_B \big(c_{(-1)}{}^{\! [2]} p c_{(0)} \big)_{[1]} \big(c_{(-2)}{}^{\! [2]} \otimes c_{(-1)}{}^{\! [1]} \big)  a_{[1]}.
\end{eqnarray*}
At this point it is crucial to observe that
 $$
 c_{(-1)}{}^{\! [1]} \otimes_B \big(c_{(-1)}{}^{\! [2]} \otimes c_{(0)}\big) \in A
\otimes_B \mathscr{H},
 $$
  that is, the last two tensorands are left coinvariant (of which we omit the easy verification) and therefore it makes sense to consider
  \begin{equation}
    \label{dochnoch}
    \begin{array}{rcl}
&&
      c_{(-1)}{}^{\! [1]} \otimes_B \gD_\mathscr{H} (c_{(-1)}{}^{\! [2]} \otimes c_{(0)})
\\[2mm]
 & = &
  c_{(-1)}{}^{\! [1]} \otimes_B \big(c_{(-1)}{}^{\! [2]} \otimes c_{(0)}\big)_{(1)} \otimes_B \big(c_{(-1)}{}^{\! [2]} \otimes c_{(0)}\big)_{(2)}
 \\
 &
   {\overset{\scriptscriptstyle{
\eqref{blauetinte2}
     }}{=}}
   &
   c_{(-2)}{}^{\! [1]}
  \otimes_B \big(c_{(-2)}{}^{\! [2]} \otimes c_{(-1)}{}^{\! [1]}\big)
  \otimes_B \big(c_{(-1)}{}^{\! [2]} \otimes c_{(0)}\big).
    \end{array}
    \end{equation}
 These are precisely the terms that appear in the computation above and which allow to apply the YD condition \eqref{yd2} in a moment. Using these, we can continue our calculation above:
\begin{eqnarray*}
  &&
  \rho\big((a \otimes_B p)c\big)
\\
&
  {\overset{\scriptscriptstyle{
%\eqref{camelion2}
}}{=}}
&
\pig(a_{[0]} c_{(-2)}{}^{\! [1]} \otimes_B \big(c_{(-1)}{}^{\! [2]} p c_{(0)} \big)_{[0]}\pig)  \otimes_B \big(c_{(-1)}{}^{\! [2]} p c_{(0)} \big)_{[1]} \big(c_{(-2)}{}^{\! [2]} \otimes c_{(-1)}{}^{\! [1]} \big)  a_{[1]}.
\\
&
  {\overset{\scriptscriptstyle{
\eqref{dochnoch}
}}{=}}
&
\pig(a_{[0]} c_{(-2)}{}^{\! [1]} \otimes_B \big((c_{(-1)}{}^{\! [2]} \otimes c_{(0)})_{(2)} p \big)_{[0]}\pig)  \otimes_B \big((c_{(-1)}{}^{\! [2]} \otimes c_{(0)})_{(2)}  p\big)_{[1]} \big(c_{(-1)}{}^{\! [2]} \otimes c_{(0)}\big)_{(1)}   a_{[1]}.
\\
&
  {\overset{\scriptscriptstyle{
\eqref{yd2}
}}{=}}
&
\big(a_{[0]} c_{(-2)}{}^{\! [1]} \otimes_B (c_{(-1)}{}^{\! [2]} \otimes c_{(0)})_{(1)} p_{[0]}\big)  \otimes_B (c_{(-1)}{}^{\! [2]} \otimes c_{(0)})_{(2)}  p_{[1]} a_{[1]}.
\\
&
  {\overset{\scriptscriptstyle{
\eqref{dochnoch}
}}{=}}
&
  \big(a_{[0]} c_{(-2)}{}^{\! [1]} \otimes_B c_{(-2)}{}^{\! [2]} p_{[0]} c_{(-1)}{}^{\! [1]}
  \big)  \otimes_B (c_{(-1)}{}^{\! [2]} \otimes c_{(0)})  p_{[1]} a_{[1]}.
\\
&
  {\overset{\scriptscriptstyle{
\eqref{allthat2}
}}{=}}
&
  \big(a_{[0]} c_{(-1)}{}^{\! [1]} \mkern -.5mu _{(-1)} \mkern -.5mu ^{[1]} \otimes_B
  c_{(-1)}{}^{\! [1]} \mkern -.5mu _{(-1)} \mkern -.5mu ^{[2]}  p_{[0]} c_{(-1)}{}^{\! [1]} \mkern -.5mu _{(0)} 
  \big)  \otimes_B (c_{(-1)}{}^{\! [2]} \otimes c_{(0)})  p_{[1]} a_{[1]}.
\\
&
  {\overset{\scriptscriptstyle{
\eqref{mist2}, \eqref{camelion2}
}}{=}}
&
  (a_{[0]} \otimes_B
  p_{[0]})c_{[0]}   \otimes_B c_{[1]} p_{[1]} a_{[1]},
\end{eqnarray*}
which is Eq.~\eqref{farmaciadelcambio} for the right $A$-action, as desired. Hence, $A \otimes_B P$ is a relative Hopf module in $\cM^\mathscr{H}_A$, and likewise (but quicker) for all remaining relative Hopf module possibilities.
\end{proof}

\begin{cor}
  \label{weich2}
  Let $H$ be a Hopf algebra over a field $k$ and $B \subseteq A$ a faithfully flat left Hopf-Galois extension. Then one has an equivalence 
  $$
\ydh \simeq \hyd
$$
of monoidal categories.
  \end{cor}

\begin{proof}
 Both categories are monoidally equivalent to the category
  of relative Hopf bimodules. The first equivalence $\ydh \simeq  {}^{\,H}_A \! \cM^\mathscr{H}_A$
has just been proven in Theorem \ref{weich} above, the second equivalence $\hyd \simeq  {}^{\,H}_A \! \cM^\mathscr{H}_A$ is a left coinvariant adaption of Theorem 6.14 in \cite{Schau:BONRAASTFHB}. 
  \end{proof}

By combining Theorem \ref{weich} with part (i) of Theorem \eqref{spielzeit}, we can conclude by stating:

\begin{cor}
    \label{weich3}
  Let $H$ be a Hopf algebra over a field $k$ and $B \subseteq A$ a faithfully flat left Hopf-Galois extension. Then
  one has an equivalence
  $$
\tetrah \simeq  {}^{\,H}_A \! \cM^\mathscr{H}_A \simeq \hyd
$$
of monoidal categories.
  \end{cor}

In view of Proposition \ref{endlichwaermerjetzt?}, one can now also enunciate that the category of Hopf bimodules over an Ehresmann-Schauenburg bialgebroid is monoidally equivalent to the respective monoidal centres of the categories of left modules or right comodules (of both $H$ or $\mathscr{H}$). 

\appendix

\section{Bialgebroids, Hopf algebroids, and their (co)modules}

Let us list some facts on bialgebroids and Hopf algebroids needed explicitly in the main text. With the possible exception of the relations \eqref{mampf1}--\eqref{mampf3}, all of this is standard material.

\addtocontents{toc}{\SkipTocEntry}
\subsection{Left bialgebroids}
A \textit{left bialgebroid} \cite{Tak:GOAOAA} is a sextuple $(U ,A, s,t,\Delta,\varepsilon)$, where
\begin{enumerate}
\compactlist{99}
\item
$(U ,s,t)$ is an $\Ae$-ring such that $U$ has two commuting $\Ae$-module structures
$$
a\lact u\ract a'
:=s(a)t(a')u,\qquad\qquad
a\blact u\bract  a'
:=ut(a)s(a'); 
$$
\item
$(U,\Delta,\varepsilon)$ is an $A$-coring with respect to the left and right $A$-actions $\lact$ and $\ract$ and coproduct $\Delta\colon U \to U _\ract\times_A\due {U } \lact {}$, where
  \begin{equation}
  \label{takeuchiyippieh}
  U _\ract\times_A\due {U } \lact {}=
  \big\{\Sum\nolimits_i u'_i\otimes_Au_i\in U _\ract\otimes_A\due {U } \lact {} \mid 
a\blact u'_i\otimes_A u_i=u'_i\otimes_Au_i\bract a,\ \forall a\in A \big\}
\end{equation}
  is called the {\em Takeuchi-Sweedler} subspace (or product, which is an $\Ae$-ring as well).
  %and in the quotient $U _\ract \otimes_A \due U \lact {}$ we identify $u\ract %a\otimes u'$ with $u\otimes a\lact u'$.
In particular, the coproduct $\gD$ is $\Ae$-bilinear in the sense of 
$$
\Delta(a\lact u\ract a')
=a\lact u_{(1)}\otimes_A u_{(2)}\ract a',\qquad\qquad
\Delta(a\blact u\bract a')
=u_{(1)}\bract  a'\otimes_Aa\blact u_{(2)},
$$
where we wrote $\Delta(u) = u_{(1)}\otimes_Au_{(2)}$;
\item
  the counit  $\varepsilon\colon U \to A$ is an $\Ae$-linear map
that satisfies
\begin{equation}
  \label{counityippieh}
\varepsilon(a\lact u\ract a')
=a\varepsilon(u)a',\qquad\qquad
\varepsilon(uu') =
\varepsilon(u\bract  \varepsilon(u'))
=\varepsilon(\varepsilon(u')\blact u),
\end{equation}
that is, it is not a ring morphism in general. 
\end{enumerate}
We mostly refer to the datum of a left bialgebroid by simply writing
$(U, A)$ or only $U$.

\addtocontents{toc}{\SkipTocEntry}
\subsection{Modules over left bialgebroids}
\label{casadante}
A ({\em left} or {\em right}) {\em module} over a left bialgebroid $(U, A)$ is a module over the underlying ring structure of the total space $U$. While the category ${}_U \cM$ of left modules is monoidal (with monoidal product $\otimes_A$, diagonal $U$-action, and unit the base algebra $A$), the category $\cM_U$ of right modules in general is not, which is one of the reasons why the definition of (left-right or right-right) Hopf modules in the main text is more intricate. Such lack of structure notwithstanding, for both categories there are forgetful functors ${}_U \cM \to {}_\Ae \cM$ resp.\ $\cM_U \to {}_\Ae \cM$, with respect to which we denote
\begin{equation}
  \label{soedermalm}
a \lact m \ract a' := s(a)t(a')m, \qquad a \blact n \bract a' := nt(a)s(a')
  \end{equation}
for $a,a'\in A$, $m\in M$, and $n\in N$, where $M$ resp.\ $N$ is a left resp.\ right $U $-module.

\addtocontents{toc}{\SkipTocEntry}
\subsection{Comodules over left bialgebroids}
\label{schleifmaschine}
A \textit{left comodule} over a left bialgebroid $(U,A)$ consists of a pair $(M,\lambda_M)$, where
$M\in{}_A\mathcal{M}$ is, a priori, a left $A$-module (with action denoted by a central dot $\cdot$), and where the map $\lambda_M\colon M\to U \otimes_AM, \ m \mapsto m_{(-1)}\otimes_Am_{(0)}$ is a
left $A$-linear left coaction over the underlying coring structure of the total space $U$.
This datum implies the existence of a right $A$-action $m \cdot a:=\varepsilon(m_{(-1)}\bract a) \cdot m_{(0)}$, which we shall call the {\em induced} right $A$-action on $M$, and with respect to which the coaction becomes right $A$-linear as well. Hence, 
\begin{equation}
	\label{hak}
	\lambda_M(a \cdot m \cdot a')
	=a\lact m_{(-1)}\bract a'\otimes_Am_{(0)},\qquad
	m_{(-1)}\otimes_Am_{(0)} \cdot a  =  a\blact m_{(-1)}\otimes_Am_{(0)}.
\end{equation}
Here, the identity on the right hand side expresses the fact that the left coaction
$\lambda_M$ {\em corestricts} to the Takeuchi-Sweedler subspace
\begin{equation*}
U_\ract \times_A M = \big\{ \textstyle\sum_i u^i \otimes m^i \in U_\ract \otimes_A M \mid  \sum_i a \blact u^i \otimes m^i = \sum_i u^i \otimes m^i \cdot a, \ \forall a \in A  \big\}.
\end{equation*}
The category of left $U $-comodules shall be denoted by ${}^U \mkern -5mu \cM$. Again, one has a forgetful functor ${}^U \mkern -5mu \cM \to {}_\Ae \cM$, by construction.

{\em Right} $U$-comodules are defined along the same lines: a pair $(N,\rho_N)$ of a right $A$-module $N \in \cM_A$, with action denoted just by juxtaposition,
and a
right $A$-linear $U$-coaction $\rho_N\colon N\to N \otimes_A \due U \lact {}$, which, in turn, {\em induces} a left $A$-action with respect to which the coaction becomes linear as well and, in particular, corestricts to the Takeuchi-Sweedler subspace $N \times_A U$, that~is,
\begin{equation}
  \label{ha3}
\rho_M(a n a' )
= n_{[0]} \otimes_A a \blact n_{[1]} \ract a',
\qquad
an_{[0]}\otimes_A n_{[1]}  = n_{[0]}\otimes_A n_{[1]} \bract a,
\end{equation}
where $an := n_{[0]} \gve(n_{[1]}\bract a)$ denotes the aforementioned induced left $A$-action, and where we employed the Sweedler notation $\rho_M(n)=n_{[0]}\otimes_A n_{[1]}$ with square brackets for better distinction. We shall denote by $\comodu$
the category of right $U$-comodules. As for left comodules, there is a forgetful functor $\cM^U \to {}_\Ae \cM$.
 
%% \addtocontents{toc}{\SkipTocEntry}
%% \subsection{Yetter-Drinfel'd modules}
%% A {\em left-right Yetter Drinfel'd} module $P$ over a left bialgebroid $(U,A)$ is
%% simultaneously a left $U$-module (with action denoted by juxtaposition) and a right $U$-comodule with coaction $\gr_P \colon P \to P \otimes_A \due U \lact {}, \ p \mapsto p_{[0]} \otimes_A p_{[1]}$ such that the two forgetful functors $\umod \to {}_\Ae \cM$ and $\comodu \to {}_\Ae \cM$ induce the same $A$-bimodule structure on $P$,
%% \begin{equation}
%%   \label{ydforget2}
%% a \lact p \ract a' = apa'
%% \end{equation}
%% for $a, a' \in A$ and $p \in P$,
%% and such that between $U$-action and $U$-coaction 
%%  the compatibility
%% \begin{equation}
%%   \label{yd2}
%% u_{(1)} p_{[0]} \otimes_A u_{(2)} p_{[1]} = (u_{(2)} p)_{[0]}  \otimes_A  (u_{(2)} p)_{[1]}u_{(1)}
%% \end{equation}
%% holds for all $u \in U$ and all $p \in P$. The corresponding category $\ydu$ is braided  monoidal with respect to $\otimes_A$ and with braiding mentioned in \eqref{ydbraid}, unit object given by the base algebra $A$; it can be shown that it is (monoidally) equivalent to the (right weak) monoidal centre of the category $\umod$.
%% Observe that
%% one can still define {\em left-left} YD modules, but in contrast to the case of bialgebras,
%% there are {\em no} right-left or right-right YD modules over left bialgebroids.

\addtocontents{toc}{\SkipTocEntry}
\subsection{Left and right Hopf algebroids over left bialgebroids}
\label{sokc}

Generalising Hopf algebras ({\em i.e.}, bialgebras with an antipode) to noncommutative base rings is a challenging task.
If one wants to avoid the abundance of structure maps that accompany the notion of a {\em full} Hopf algebroid as in \cite{BoeSzl:HAWBAAIAD}, that is, {\em two} bialgebroid structures (meaning two coproducts, two counits, eight $A$-actions on the total space, {\em etc.}) and an antipode map as sort of intertwiner between all of this, one renounces on the idea of an antipode and rather requires a certain Hopf-Galois map to be invertible {\cite{Schau:DADOQGHA}}: this leads to a more general concept than that of full Hopf algebroids.
More precisely, if $(U, A)$ is a left bialgebroid, consider the maps
\begin{equation}
  \label{nochmehrRegen}
\begin{array}{rclrcl}
\ga_\ell  \colon  \due U \blact {} \otimes_{\Aop} U_\ract &\to& U_\ract  \otimes_A  \due U \lact,
 & u \otimes_\Aop v  &\mapsto&  u_{(1)} \otimes_A u_{(2)}  v,
 \\
\ga_r  \colon  U_{\!\bract}  \otimes_A \! \due U \lact {}  &\to& U_{\!\ract}  \otimes_A  \due U \lact,
&  u \otimes_A v  &\mapsto&  u_{(1)}  v \otimes_A u_{(2)},
\end{array}
\end{equation}
of left $U$-modules.
Then a left bialgebroid $(U,A)$ is called
a
{\em left Hopf algebroid} or simply {\em left Hopf} if $ \alpha_\ell $ is invertible and
{\em right Hopf algebroid} or {\em right Hopf}
%if $ \alpha_r $ is invertible.
if this is the case for $\ga_r$.
Adopting two different kind of Sweedler notations
\begin{equation*}
  \begin{array}{rcl}
 u_+ \otimes_\Aop u_-  & \coloneqq &  \alpha_\ell^{-1}(u \otimes_A 1)
 \\
   u_{\smap } \otimes_A u_{\smam  }  & \coloneqq &  \alpha_r^{-1}(1 \otimes_A u),
\end{array}
  \end{equation*}
with, as usual, summation understood, one proves that
for a left Hopf algebroid
\begin{eqnarray}
\label{Sch1}
u_+ \otimes_\Aop  u_- & \in
& U \times_\Aop U,  \\
\label{Sch2}
u_{+(1)} \otimes_A u_{+(2)} u_- &=& u \otimes_A 1 \quad \in U_{\!\ract} \! \otimes_A \! {}_\lact U,  \\
\label{Sch3}
u_{(1)+} \otimes_\Aop u_{(1)-} u_{(2)}  &=& u \otimes_\Aop  1 \quad \in  {}_\blact U \! \otimes_\Aop \! U_\ract,  \\
\label{Sch4}
u_{+(1)} \otimes_A u_{+(2)} \otimes_\Aop  u_{-} &=& u_{(1)} \otimes_A u_{(2)+} \otimes_\Aop u_{(2)-},  \\
\label{Sch5}
u_+ \otimes_\Aop  u_{-(1)} \otimes_A u_{-(2)} &=&
u_{++} \otimes_\Aop u_- \otimes_A u_{+-},
\\
\label{Sch6}
(uv)_+ \otimes_\Aop  (uv)_- &=& u_+v_+ \otimes_\Aop v_-u_-,
\\
\label{Sch7}
u_+u_- &=& s (\varepsilon (u)),
\\
\label{Sch8}
\varepsilon(u_-) \blact u_+  &=& u,
\\
\label{Sch9}
(s (a) t (a'))_+ \otimes_\Aop  (s (a) t (a') )_-
&=& s (a) \otimes_\Aop s (a')
\end{eqnarray}
are true \cite{Schau:DADOQGHA},
where in  \eqref{Sch1}  we mean the Takeuchi-Sweedler product
\begin{equation*}
\label{petrarca}
   U \! \times_\Aop \! U    \coloneqq
   \big\{ {\textstyle \sum_i} u_i \otimes v_i \in {}_\blact U  \otimes_\Aop  U_{\!\ract} \mid {\textstyle \sum_i} u_i \ract a \otimes v_i = {\textstyle \sum_i} u_i \otimes a \blact v_i, \ \forall a \in A \big\},
\end{equation*}
and if the left bialgebroid $(U,A)$ is right Hopf, in the same spirit one verifies
\begin{eqnarray}
\label{Tch1}
u_{\smap } \otimes_A  u_{\smam  } & \in
& U \times_A U,  \\
\label{Tch2}
u_{\smap (1)} u_{\smam  } \otimes_A u_{\smap (2)}  &=& 1 \otimes_A u \quad \in U_{\!\ract} \! \otimes_A \! {}_\lact U,  \\
\label{Tch3}
u_{(2)\smap } \otimes_A u_{(2)\smam  }u_{(1)}  &=& u \otimes_A 1 \quad \in U_{\!\bract} \!
\otimes_A \! \due U \lact {},  \\
\label{Tch4}
u_{\smap (1)} \otimes_A u_{\smam  } \otimes_A u_{\smap (2)} &=& u_{(1)\smap } \otimes_A
u_{(1)\smam  } \otimes_A  u_{(2)},  \\
\label{Tch5}
u_{\smap \smap } \otimes_A  u_{\smap \smam  } \otimes_A u_{\smam  } &=&
u_{\smap } \otimes_A u_{\smam  (1)} \otimes_A u_{\smam  (2)},  \\
\label{Tch6}
(uv)_{\smap } \otimes_A (uv)_{\smam  } &=& u_{\smap }v_{\smap }
\otimes_A v_{\smam  }u_{\smam  },  \\
\label{Tch7}
u_{\smap }u_{\smam  } &=& t (\varepsilon (u)),  \\
\label{Tch8}
u_{\smap } \bract \varepsilon(u_{\smam  })  &=&  u,  \\
\label{Tch9}
(s (a) t (a'))_{\smap } \otimes_A (s (a) t (a') )_{\smam  }
&=& t(a') \otimes_A t(a),
\end{eqnarray}
see \cite[Prop.~4.2]{BoeSzl:HAWBAAIAD},
where in  \eqref{Tch1} we denoted
\begin{equation*}  \label{petrarca2}
   U \times_A U    \coloneqq
   \big\{ {\textstyle \sum_i} u_i \otimes  v_i \in U_{\!\bract}  \otimes_A \!  \due U \lact {} \mid {\textstyle \sum_i} a \lact u_i \otimes v_i = {\textstyle \sum_i} u_i \otimes v_i \bract a,  \ \forall a \in A  \big\}.
\end{equation*}

If the left bialgebroid $(U,A)$ is simultaneously left and right Hopf, the compatibility between the two (inverses of the) Hopf-Galois maps comes out as:
\begin{eqnarray}
\label{mampf1}
u_{+\smap } \otimes_\Aop u_{-} \otimes_A u_{+\smam  } &=& u_{\smap +} \otimes_\Aop u_{\smap -} \otimes_A u_{\smam  }, \\
\label{mampf2}
u_+ \otimes_\Aop u_{-\smap } \otimes_A u_{-\smam  } &=& u_{(1)+} \otimes_\Aop u_{(1)-} \otimes_A u_{(2)}, \\
\label{mampf3}
u_{\smap} \otimes_A u_{{\smam}+} \otimes_\Aop u_{\smam  -} &=& u_{(2)\smap } \otimes_A u_{(2)\smam  } \otimes_\Aop u_{(1)},
\end{eqnarray}
see \cite[Lem.~2.3.4]{CheGavKow:DFOLHA}.

%% A simultaneous left and right Hopf structure on a left bialgebroid still does not imply the existence of an antipode required in the definition of a full Hopf algebroid.
%

\begin{example}
  \label{factis}
In case $(U, A) = (H, k)$ is actually a Hopf algebra over a field $k$,
the invertibility of $\ga_\ell$ guarantees the existence of the antipode $S$, while
the invertibility of $\ga_r$ guarantees the existence of the inverse  (or {\em op-antipode} for that matter) $S^{-1}$ of the antipode.
More precisely,
%for any $h \in H$ we have
\begin{equation*}
  \label{sesam}
  \begin{array}{rcl}
     h_+ \otimes_k h_- &=& h_{(1)} \otimes_k S(h_{(2)})
     \\[2pt]
    h_{\smap } \otimes_k h_{\smam  } &=& h_{(2)} \otimes_k S^{-1}(h_{(1)}).
  \end{array}
  \end{equation*}
in
this case, for any $h \in H$.
\end{example}

\end{document}